\let\leq\leqslant
\let\geq\geqslant
\let\epsilon\varepsilon
\newtheorem{theorem}{Theorem}
\newtheorem{lemma}[theorem]{Lemma}
\newtheorem{proposition}[theorem]{Proposition}
\newtheorem{corollary}[theorem]{Corollary}
\theoremstyle{definition}
\newtheorem*{remark}{Remark}
\numberwithin{equation}{section}
\newcommand{\A}{\mathcal{A}}
\newcommand{\B}{\mathcal{B}}
\newcommand{\C}{\mathcal{C}}
\newcommand{\Ga}{\Gamma}
\newcommand{\R}{\mathbb{R}}
\newcommand{\Z}{\mathbb{Z}}
\newcommand{\al}{\alpha}
\newcommand{\ep}{\varepsilon}
\newcommand{\e}{{\rm e}}
\newcommand{\g}{\gamma}
\newcommand{\ie}{\emph{i.e.}}
\newcommand{\ph}{\phi_{p,q}}
\title{\bfseries The self-dual point of the two-dimensional \\
                 random-cluster model is critical for $q\geq 1$}
\author{V\!. Beffara \and H. Duminil-Copin}
\begin{document}

\maketitle

\begin{abstract}
  We prove a long-standing conjecture on random-cluster models, namely 
  that the critical point for such models with parameter $q\geq1$ on the 
  square lattice is equal to the self-dual point $p_{sd}(q) = \sqrt{q} / 
  (1+\sqrt{q})$. This gives a proof that the critical temperature of the 
  $q$-state Potts model is equal to $\log (1+\sqrt q)$ for all $q\geq 
  2$. We further prove that the transition is sharp, meaning that there 
  is exponential decay of correlations in the sub-critical phase. The 
  techniques of this paper are rigorous and valid for all $q\geq 1$, in 
  contrast to earlier methods valid only for certain given $q$. The 
  proof extends to the triangular and the hexagonal lattices as well.
\end{abstract}

\section*{Introduction}

Since random-cluster models were introduced by Fortuin and Kasteleyn in 
1969~\cite{FortuinKasteleyn}, they have become an important tool in the 
study of phase transitions. The spin correlations of Potts models are 
rephrased as cluster connectivity properties of their random-cluster 
representations. This allows the use of geometric techniques, thus 
leading to several important applications. Nevertheless, only a few 
aspects of the random-cluster models are understood in full generality.

The \emph{random-cluster model} on a finite connected graph is a model 
on the edges of the graph, each one being either closed or open. The 
probability of a configuration is proportional to $$p^{\# \;\text{open 
edges}} (1-p)^{\# \; \text{closed edges}} q^{\# \; \text{clusters}},$$ 
where the \emph{edge-weight} $p\in[0,1]$ and the \emph{cluster-weight} 
$q\in(0,\infty)$ are the parameters of the model. For $q\geq 1$, this 
model can be extended to infinite-volume lattices where it exhibits a 
phase transition at some critical parameter $p_c(q)$ (depending on the 
lattice). There are no general conjectures for the value of the critical 
point.

However, in the case of planar graphs, there is a connection (related to 
the Kramers-Wannier duality for the Ising model \cite{KramersWannier}) 
between random-cluster models on a graph and on its dual with the same 
cluster-weight $q$ and appropriately related edge-weights $p$ and 
$p^\star=p^\star(p)$. This relation leads in the particular case of 
$\mathbb Z^2$ (which is isomorphic to its dual) to a natural conjecture: 
the critical point is the same as the so-called \emph{self-dual point} 
satisfying $p_{sd}=p^\star(p_{sd})$, which has a known value
$$p_{sd}(q) = \frac{\sqrt{q}}{1+\sqrt{q}}.$$

\bigskip

In the present article, we prove this conjecture for all $q\geq 1$:
\begin{theorem}\label{critical_value}
  Let $q\geq 1$. The critical point $p_c=p_c(q)$ for the random-cluster 
  model with cluster-weight $q$ on the square lattice $\Z^2$ satisfies 
  $$p_c = \frac{\sqrt{q}}{1+\sqrt{q}}.$$
\end{theorem}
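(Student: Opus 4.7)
The plan is to prove the two inequalities $p_c\geq p_{sd}$ and $p_c\leq p_{sd}$ separately. The first is essentially classical, via a Zhang-type argument combined with Burton-Keane uniqueness. Assume for contradiction that $p_c<p_{sd}$; then at $p=p_{sd}$ both the primal measure $\ph$ and the dual measure on the shifted dual lattice (whose edge-weight $p^\star(p_{sd})$ equals $p_{sd}$ by self-duality) would percolate. Using the $\pi/2$-symmetry of $\Z^2$ together with FKG, which holds for $q\geq 1$, one can then force four disjoint infinite clusters, two primal and two dual, to touch the four sides of a sufficiently large box, contradicting uniqueness of the infinite cluster. This yields $p_c\geq p_{sd}$ with no quantitative input.

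The reverse inequality $p_c\leq p_{sd}$ is the crux of the theorem, and I would attack it by showing that for every $p<p_{sd}$ the probability of a horizontal crossing of an $n\times n$ square tends to $0$ as $n\to\infty$; this is enough to rule out percolation and conclude that $p_c\geq p$. The strategy has two stages. The first is to establish Russo-Seymour-Welsh (RSW) type estimates at the self-dual point: for each aspect ratio $\rho\geq 1$, there exists $c(\rho,q)>0$ such that under $\phi_{p_{sd},q}$ the probability of a horizontal open crossing of a $\rho n\times n$ rectangle is at least $c(\rho,q)$, uniformly in $n$ and in boundary conditions. The starting observation is that self-duality forces the crossing probability of a near-square at $p_{sd}$ to be essentially $1/2$; one then bootstraps to arbitrary aspect ratios via FKG gluing of overlapping crossings, reflection symmetries, and monotonicity in boundary conditions.

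Given RSW at $p_{sd}$, the second stage exploits a sharp-threshold principle of Kahn-Kalai-Linial/Russo type: crossing probabilities of long rectangles are increasing in $p$, and provided the influence of each individual edge is $o(1)$, they must transition from $o(1)$ to $1-o(1)$ within a $p$-window of width $O(1/\log n)$. Combined with the RSW lower bound at $p_{sd}$, this implies that for any fixed $p<p_{sd}$ the crossing probability at scale $n$ tends to $0$. A standard block renormalisation then upgrades this absence of percolation into exponential decay of $\ph[0\leftrightarrow\partial[-n,n]^2]$, producing the sharpness statement announced in the abstract.

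The principal obstacle, compared with Bernoulli percolation ($q=1$), is the RSW step: the factor $q^{\#\text{clusters}}$ introduces long-range dependence, so the conditioning and finite-energy tricks that are classical for $q=1$ must be replaced by arguments using only FKG, monotonicity in boundary conditions, and self-duality. The sharp-threshold step is similarly delicate because the underlying measure is not a product, and deriving a usable bound on influences in this dependent setting will likely constitute the most technical piece of the proof.
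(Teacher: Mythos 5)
Your argument for $p_c\geq p_{sd}$ matches the paper's (Zhang's argument plus uniqueness of the infinite cluster), and your high-level recipe --- RSW at $p_{sd}$ plus a Graham--Grimmett sharp-threshold step --- is indeed the paper's template. But there are two substantive problems with the way you then deploy it.

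First, the logic of the second inequality is reversed. You propose to show that for every $p<p_{sd}$ the $n\times n$ crossing probability tends to $0$, and you correctly note that this yields $p_c\geq p$. But taking the supremum over $p<p_{sd}$ only gives $p_c\geq p_{sd}$ --- the inequality you already have. It says nothing about what happens above $p_{sd}$. To establish $p_c\leq p_{sd}$ you must show that for every $p>p_{sd}$ there \emph{is} an infinite cluster. The paper does exactly this: the sharp-threshold argument is run upward from $p_{sd}$ to conclude that for $p>p_{sd}$ the crossing probabilities of long rectangles tend to $1$ with a quantitative rate (equation \eqref{important} applied in Lemma~\ref{long_path}), and then these crossings are glued through a nested sequence of annuli (Lemma~\ref{circuit} and Proposition~\ref{path_circuits}) to produce an infinite cluster with positive probability. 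The last step is itself delicate: the classical Kesten renormalisation does not carry over because of the dependence, and the paper's workaround is to explore annuli from outside in and propagate wired boundary conditions inward; a ``standard block renormalisation'' as you suggest would not be self-evidently valid here.

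Second, you assert RSW at $p_{sd}$ ``uniformly in $n$ and in boundary conditions.'' This is strictly stronger than what the paper proves and is identified there as an open question (see the remark following Corollary~\ref{RSW_bulk}); for $q\geq 4$ it is in fact expected to \emph{fail} for free boundary conditions. The paper establishes the crossing estimate only for periodic (Theorem~\ref{RSW_torus}) and wired (Corollary~\ref{RSW_bulk}) boundary conditions, and this suffices because the sharp-threshold theorem is applied on the torus, where translation invariance equalises all edge influences (Theorem~\ref{symmetric_sharp_threshold}). Your proposal would need the torus reduction explicitly; as stated, the uniform-in-boundary-conditions RSW you invoke is not available, and without the symmetrisation on the torus the Kahn--Kalai--Linial bound on the maximal influence does not directly translate into a uniform lower bound on the total influence that the differential inequality requires.
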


A rigorous derivation of the critical point was previously known in 
three cases. For $q=1$, the model is simply bond percolation, proved by 
Kesten in 1980~\cite{Kesten} to be critical at $p_c(1)=1/2$. For $q=2$, 
the self-dual value corresponds to the critical temperature of the Ising 
model, as first derived by Onsager in 1944~\cite{Onsager}; one can 
actually couple realizations of the Ising and FK models to relate their 
critical points, see \cite{Grimmett} and references therein for details.  
For modern proofs in that case, see \cite{AizenmanBarskyFernandez} or 
the short proof of \cite{BeffaraDuminilSmirnov}. Finally, for 
sufficiently large $q$, a proof is known based on the fact that the 
random-cluster model exhibits a first order phase transition 
(see~\cite{LaanaitMessagerMiracleSoleRuizShlosman, LaanaitMessagerRuiz}, 
the proofs are valid for $q$ larger than $25.72$). We mention that 
physicists derived the critical temperature for the Potts models with 
$q\geq4$ in 1978, using non-geometric arguments based on analytic 
properties of the Hamiltonian~\cite{HintermannKunzWu}.

In the sub-critical phase, we prove that the probability for two points 
$x$ and $y$ to be connected by a path decays exponentially fast with 
respect to the distance between $x$ and $y$. In the super-critical 
phase, the same behavior holds in the dual model. This phenomenon is 
known as a \emph{sharp phase transition}:
\begin{theorem}\label{exponential_decay}
  Let $q\geq1$. For any $p<p_c(q)$, there exist $0<C(p,q),c(p,q)<\infty$ 
  such that for any $x,y\in \Z^2$, \begin{equation}
    \ph(x\leftrightarrow y)\leq C(p,q)\e^{-c(p,q)|x-y|},
  \end{equation}
  where $|\cdot|$ denotes the Euclidean norm.
\end{theorem}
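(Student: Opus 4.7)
The plan is to deduce Theorem~\ref{exponential_decay} from Theorem~\ref{critical_value} by combining a finite-size criterion for exponential decay with a sharp-threshold argument that verifies the criterion for every $p<p_c$. Roughly, knowing that $p_c=p_{sd}$ means that all subcritical parameters $p<p_{sd}$ are ``strictly below the critical point'', and on the square lattice this should be strong enough that crossing probabilities of large boxes at parameter $p$ go to $0$; once a single-scale crossing probability is sufficiently small, standard renormalization yields exponential decay.

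The first step would be to prove a finite-size criterion of the following shape: there exists $\delta_0=\delta_0(q)>0$ such that, if $\ph([-N,N]^2\leftrightarrow\partial[-2N,2N]^2)<\delta_0$ for some $N\geq1$, then one has $\ph(0\leftrightarrow x)\leq Ce^{-c|x|}$ for every $x\in\Z^2$ for some $C,c>0$. This is a purely renormalization statement: a long open connection from $0$ to a far-away vertex must cross many disjoint annuli of width $N$, and conditioning on the configuration outside each such annulus yields a measure stochastically dominated by the random-cluster measure with free boundary conditions on the annulus (by the domain Markov property and the FKG inequality, valid for $q\geq 1$). This provides a sub-multiplicative bound on successive crossings, and hence an exponential decay.

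The second step would be to verify this criterion for every $p<p_c$. Setting $\theta_N(p):=\ph([-N,N]^2\leftrightarrow\partial[-2N,2N]^2)$, the event is increasing and depends on finitely many edges, so the sharp-threshold principle applies in the form of the Graham--Grimmett differential inequality for random-cluster measures (the analogue of BKKKL for non-product measures), giving a bound of the shape $\frac{d}{dp}\theta_N(p)\geq c\,\log(1/\max_e I_e)\cdot\theta_N(p)(1-\theta_N(p))$, where $I_e$ is the influence of edge $e$ on the crossing event. If $\theta_N(p)$ stayed bounded away from $0$ at some $p<p_c$, integrating this inequality between $p$ and any $p'\in(p,p_c)$ would force $\theta_N(p')\to 1$, which via the first-step renormalization would produce an infinite open cluster at $p'<p_c$, contradicting Theorem~\ref{critical_value}.

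The main obstacle would be controlling the individual edge influences $I_e$, since $\ph$ is not a product measure and one is asking for them to be uniformly small (say of order an inverse power of $N$), uniformly in $p$ in a compact subinterval of $[0,p_c)$. I would expect to handle this by combining the comparison between $\ph$ and Bernoulli percolation of parameter $p/(p+q(1-p))$ with the box-crossing (RSW-type) estimates at $p_{sd}$ used along the proof of Theorem~\ref{critical_value}: these furnish polynomial upper bounds on four-arm probabilities, hence on influences, which is enough for the logarithmic factor in the Graham--Grimmett inequality to drive $\theta_N$ away from any value bounded away from $0$ and $1$ at any $p<p_c$.
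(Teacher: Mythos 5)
Your proposed route is genuinely different from the one in the paper, but it contains gaps in precisely the places where the paper has to work hardest, so as written it does not constitute a proof.

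The most serious issue is the step where you claim that $\theta_N(p')\to 1$ for some $p'<p_c$ ``would produce an infinite open cluster at $p'$, contradicting Theorem~\ref{critical_value}.'' Your finite-size criterion (even corrected, see below) goes in the opposite direction: from $\theta_N$ small one deduces exponential decay, but it does \emph{not} give the converse implication that $\theta_N$ large builds an infinite cluster. Constructing an infinite cluster from high crossing probabilities is the Kesten-type renormalization, and the paper explicitly warns that ``the classic construction, used by Kesten for instance, does not seem to work in our case'' because of the difficulty of propagating boundary conditions in a dependent model. Overcoming this is exactly the content of Proposition~\ref{path_circuits}, with its careful ``condition from the outside in and keep track of wired boundary conditions'' scheme. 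You are implicitly invoking that proposition without supplying it, so the contradiction you want is not available.

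Two further points. First, a sign error: conditioning on the configuration outside an annulus gives a measure that \emph{stochastically dominates} the free-boundary measure (and is dominated by the wired one), so the correct finite-size criterion must be stated with \emph{wired}, not free, boundary conditions on the renormalization boxes; otherwise the inequality runs the wrong way. Second, your appeal to the Graham--Grimmett inequality in the $\log(1/\max_e I_e)$ form requires a genuine upper bound on the largest influence of the crossing event, uniform in $p$. The paper deliberately sidesteps this by working on the torus, where translation invariance forces all edges to have the \emph{same} influence and hence converts the maximum-influence lower bound of Theorem~\ref{maximum_influence} into a usable lower bound for the \emph{sum} of influences (this is Theorem~\ref{symmetric_sharp_threshold}). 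Your plan instead rests on polynomial four-arm (or one-arm) bounds derived from RSW; but the RSW estimates in Section~\ref{sec:crossings} hold only at $p=p_{sd}$ and only under periodic or wired boundary conditions, and do not transfer by monotonicity to influence bounds at a fixed $p<p_{sd}$, nor are they uniform in boundary condition (the paper explicitly notes that uniformity in boundary conditions is an open problem for general $q$). So that step is not justified either.

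For contrast, the paper's own proof of Theorem~\ref{exponential_decay} is quite different: it first shows that the cluster size has finite moments, using the Hamming-distance differential inequality of Proposition~\ref{russo_hamming_proposition} together with Proposition~\ref{path_circuits} applied to the (supercritical) \emph{dual} model to lower-bound the expected Hamming distance to the one-arm event by $c\log n$; it then upgrades polynomial decay of the one-arm probability to exponential decay via a greedy-lattice-animal argument (Fontes--Newman, Cox--Gandolfi--Griffin--Kesten). In other words, the paper reuses the whole supercritical machinery (Lemmas~\ref{long_path}--\ref{circuit}, Proposition~\ref{path_circuits}) in the dual, rather than running a free-standing sharp-threshold dichotomy. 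If you wish to salvage your approach, you would have to (i) restate the finite-size criterion with wired boundary conditions, (ii) run the sharp-threshold argument on the torus to exploit translation invariance as in Theorem~\ref{symmetric_sharp_threshold}, and (iii) supply the nontrivial construction of Proposition~\ref{path_circuits} (or an equivalent) for the ``large crossing probability implies percolation'' direction; at that point you would essentially be reproducing the paper's argument for $p_c\leq p_{sd}$ inside your own proof.
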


\bigskip

The proof involves two main ingredients. The first one is an estimate on 
crossing probabilities at the self-dual point $p=\sqrt{q}/(1+\sqrt{q})$: 
the probability of crossing a rectangle with aspect ratio $(\alpha,1)$ 
--- meaning that the ratio between the width and the height is of order 
$\alpha$ --- in the horizontal direction is bounded away from $0$ and $1$ 
uniformly in the size of the box.  This result is the main new 
contribution of this paper. It is a generalization of the celebrated 
Russo-Seymour-Welsh theorem for percolation.

The second ingredient is a collection of sharp threshold theorems, which 
were originally introduced for product measures. They have been used in 
many contexts, and are a powerful tool for the study of phase 
transitions, see Bollob\'as and Riordan 
\cite{BollobasRiordan,BollobasRiordanperco}. These theorems were later 
extended to positively associated measures by Graham and Grimmett 
\cite{GrahamGrimmett,GrahamGrimmett2,Grimmett}. In our case, they may be 
used to show that the probability of crossings goes to $1$ when 
$p>\sqrt{q}/(1+\sqrt{q})$. 

Actually, the situation is slightly more complicated than usual: the 
dependence inherent in the model makes boundary conditions difficult to 
handle, so that new arguments are needed. More precisely, one can use a 
classic sharp threshold argument for symmetric increasing events in 
order to deduce that the crossing probabilities of larger and larger 
domains, under \emph{wired boundary condition}, converge to $1$ whenever 
$p>\sqrt{q}/(1+\sqrt{q})$.  Moreover, the theorem provides us with 
bounds on the speed of convergence for rectangles with \emph{wired 
boundary condition}. A new way of combining long paths allows us to 
create an infinite cluster. We emphasize that the classic construction, 
used by Kesten \cite{Kesten} for instance, does not seem to work in our 
case. 

The approach allows the determination of the critical value, but it 
provides us with a rather weak estimate on the speed of convergence for 
crossing probabilities. Nevertheless, combining the fact that the 
crossing probabilities go to $0$ when $p<p_{sd}$ with a very general 
threshold theorem, we deduce that the cluster-size at the origin has 
finite moments of any order. It is then an easy step to derive the 
exponential decay of the two-point function.

\bigskip

Theorem~\ref{critical_value} has several notable consequences. First, it 
extends up to the critical point results that are known for the 
sub-critical random-cluster models under the exponential decay condition 
(for instance, Ornstein-Zernike estimates \cite{CampaninoIoffeVelenik} 
or strong mixing properties). Second, it identifies the critical value 
of the Potts models via the classical coupling between random-cluster 
models with cluster-weight $q\in \mathbb{N}$ and the $q$-state Potts 
models:
\begin{theorem}
  Let $q\geq2$ be an integer; consider the $q$-state Potts model on 
  $\Z^2$, defined by the Hamiltonian $$H_q(c) := - \sum_{(xy)\in E} 
  \delta_{c_x,c_y}$$ (where $c_x\in\{1,\ldots,q\}$ is the color at site 
  $x$ and $E$ the set of edges of the lattice). The model exhibits a 
  phase transition at the critical inverse temperature $$\beta_c(q)=\log 
  (1+\sqrt q).$$
\end{theorem}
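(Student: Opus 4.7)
The plan is to derive this statement as a corollary of Theorem~\ref{critical_value} via the Edwards--Sokal coupling between Potts and random-cluster measures. Recall that on any finite graph, a joint measure on colorings $c\in\{1,\dots,q\}^V$ and edge configurations $\omega\in\{0,1\}^E$ can be built so that: (i) the marginal on colorings is the $q$-state Potts model at inverse temperature $\beta$, (ii) the marginal on edges is the random-cluster measure with $q$ integer and $p=1-\e^{-\beta}$, and (iii) conditionally on $\omega$, the colors are i.i.d.\ uniform on $\{1,\dots,q\}$ across clusters of $\omega$, and conditionally on $c$, edges with matching endpoints are independently open with probability $p$. I would begin by recalling this coupling carefully, emphasizing that it passes to the infinite-volume limits (with, say, free and wired / monochromatic boundary conditions corresponding to one another).

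Next I would translate the Potts phase transition into a random-cluster statement. The standard route is to observe that the two-point function $\langle \delta_{c_0,c_x}\rangle - 1/q$ of the Potts model equals $(1-1/q)\,\ph(0\leftrightarrow x)$ under the coupling, and that the spontaneous magnetization of the Potts model is positive if and only if the wired random-cluster measure percolates. Consequently, the Potts critical inverse temperature $\beta_c(q)$ is determined by the relation
\begin{equation*}
  1-\e^{-\beta_c(q)} \;=\; p_c(q).
\end{equation*}
Then I would invoke Theorem~\ref{critical_value}, which gives $p_c(q)=\sqrt{q}/(1+\sqrt{q})$, and solve:
\begin{equation*}
  \e^{-\beta_c(q)} \;=\; 1 - \frac{\sqrt{q}}{1+\sqrt{q}} \;=\; \frac{1}{1+\sqrt{q}},
  \qquad\text{hence}\qquad \beta_c(q) \;=\; \log\bigl(1+\sqrt{q}\bigr).
\end{equation*}

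There is no real obstacle: the proof is essentially a dictionary between the two models, together with the substitution $p=1-\e^{-\beta}$. The only points requiring a touch of care are the equivalence of the two natural definitions of $\beta_c$ on the Potts side (existence of long-range order, \emph{resp.}\ non-analyticity of the free energy) with percolation in the wired random-cluster measure, and the fact that both notions of critical point coincide at $p_c(q)$ for $q\geq 1$; for integer $q\geq 2$ this is well documented in \cite{Grimmett} and goes back to the original work of Fortuin and Kasteleyn. Given Theorem~\ref{critical_value}, the identification $\beta_c(q)=\log(1+\sqrt{q})$ follows in one line.
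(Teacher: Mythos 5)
Your proposal is correct and follows exactly the same route the paper takes: the paper states this theorem as an immediate corollary of Theorem~\ref{critical_value} ``via the classical coupling between random-cluster models with cluster-weight $q\in\mathbb{N}$ and the $q$-state Potts models,'' without spelling out the details. Your write-up simply makes explicit the Edwards--Sokal coupling, the identification of the Potts two-point function with the connection probability, and the substitution $p=1-\e^{-\beta}$ that the paper leaves implicit.
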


\bigskip

The methods of this paper harness symmetries of the graph, together with 
the self-dual property of the square lattice. In the case of the 
hexagonal and triangular lattices, the symmetries of the graphs, the 
duality property between the hexagonal and the triangular lattices and 
the star-triangle relation allow us to extend the crossing estimate 
proved in Section~\ref{sec:crossings}, at the price of additional 
technical difficulties. The rest of the proof can be carried over to the 
triangular and the hexagonal lattices as well, yielding the following 
result:
\begin{theorem}\label{triangular}
  The critical value $p_c=p_c(q)$ for the random-cluster model with 
  cluster-weight $q\geq 1$ satisfies
  $$\begin{array}{ll}
    y_c^3+3y_c^2-q=0  & \text{on the triangular lattice and}\\
    y_c^3-3qy_c-q^2=0 & \text{on the hexagonal lattice,}
  \end{array}$$
  where $y_c:=p_c/(1-p_c)$. Moreover, there is exponential decay in the 
  sub-critical phase.
\end{theorem}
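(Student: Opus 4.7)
The plan is to imitate the proof of Theorem \ref{critical_value}, keeping the same two-step template — a uniform RSW-type crossing estimate at the candidate critical point, plus a sharp-threshold argument — but replacing the self-duality of $\Z^2$ with the planar duality that exchanges the triangular and hexagonal lattices, bridged by the star-triangle relation. The cubic equations in the statement are precisely the values of $y_c=p_c/(1-p_c)$ for which the random-cluster weights on the triangular lattice and their hexagonal duals are fixed (jointly) by the combination of duality and star-triangle; this is what plays the role of the self-dual value $\sqrt q/(1+\sqrt q)$ on $\Z^2$.

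The first and main step is to establish the crossing estimate at these self-dual values on both lattices. The difficulty is that neither lattice is self-dual, so one cannot simply invoke the identity between primal crossings of a rectangle and dual crossings of its rotation to extract a bound $\geq 1/2$. Instead, I would use the $2\pi/3$ rotational symmetry of each lattice to work with parallelogram-shaped domains adapted to the lattice, and combine this with the star-triangle transformation, which locally replaces a downward triangle of the triangular lattice with a hexagonal face while preserving the random-cluster measure precisely when the weights satisfy the cubic identities in the statement. Applying such transformations to whole regions transports primal crossings on the triangular lattice to dual crossings on the hexagonal one and vice versa, and chaining this with the existing RSW-type argument of Section \ref{sec:crossings} yields the required uniform crossing bound at $p_c$ on both lattices. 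The technical cost is controlling the boundary, since the star-triangle transformation acts face by face and cannot be applied cleanly on the outermost layer of a finite domain.

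Once the crossing estimate is in place, the remainder of the argument for Theorems \ref{critical_value} and \ref{exponential_decay} — the sharp-threshold argument leading to the creation of an infinite cluster above the self-dual point, and the moment bound on cluster sizes that implies exponential decay below — uses only the FKG inequality, the generalized sharp-threshold theorem for positively associated measures, and combinatorial path constructions. All of these are valid on the triangular and hexagonal lattices without modification, so this part of the proof transcribes directly, with rectangles and paths reinterpreted in the appropriate geometry. The single genuine obstacle is therefore the RSW extension, and in particular the handling of the boundary corrections introduced by partial star-triangle moves; I expect most of the work to lie in checking that the crossing probabilities on the transformed domains remain comparable, up to the geometric boundary layer, to the crossing probabilities on the original rectangles.
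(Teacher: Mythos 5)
Your plan matches the paper's proof in structure and in where it locates the difficulty: the paper also derives Theorem~\ref{triangular} by establishing a crossing estimate at the value $p_\mathbb{T}$ solving $y^3+3y^2-q=0$ via a duality-plus-star-triangle argument (Lemma~\ref{self-duality_triangular}, with an extra Hypothesis $(\star\star)$ and modified paths $\Ga_1',\Ga_2'$ to control exactly the boundary-layer issue you flag), and then notes that Sections~\ref{sec:sharp_threshold} and~\ref{sec:proofs} transfer \emph{mutatis mutandis}. The one small divergence is cosmetic rather than structural: the paper's self-duality lemma hinges on a \emph{reflection} $\sigma_d$ sending $\mathbb{T}$ into a sublattice of $\mathbb{H}$ (with the $2\pi/3$ rotation used only as an auxiliary symmetry), whereas you lead with the rotational symmetry; but the role of the star-triangle move and the reason the cubic appears are identified correctly.
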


\bigskip

There are many unanswered questions related to 
Theorem~\ref{critical_value}. First, the behavior at criticality is not 
well understood in the general case, and it seems that new techniques 
are needed. It is conjectured that the random-cluster model undergoes a 
second-order phase transition for $q\in(0,4)$ (it is further believed 
that the scaling limit is then conformally invariant), and a first-order 
phase transition when $q\in(4,\infty)$. Proving the above for every $q$ 
remains a major open problem.
 We mention that the random-cluster models with parameter $q=2$ 
 \cite{Smirnov} or $q$ very large 
 \cite{LaanaitMessagerMiracleSoleRuizShlosman, LaanaitMessagerRuiz} are 
 much better understood.  Second, the technology developed in the 
 present article relies heavily on the positive association property of 
 the random-cluster measures with $q\geq 1$. Our strategy does not 
 extend to random-cluster models with $q<1$. Understanding these models 
 is a challenging open question.

\bigskip

The paper is organized as follows. In Section~\ref{sec:basic} we review 
some basic features of random-cluster models.  
Section~\ref{sec:crossings} is devoted to the statement and the proof of 
the crossing estimates. In Section~\ref{sec:sharp_threshold}, we briefly 
present the theory of sharp threshold that we will employ in the next
section.  Section~\ref{sec:proofs} contains the proofs of 
Theorems~\ref{critical_value} and~\ref{exponential_decay}.  
Section~\ref{sec:triangular} is devoted to extensions to other lattices 
and contains the proof of Theorem~\ref{triangular}.

\section{Basic features of the model}
\label{sec:basic}

We start with an introduction to the basic features of random-cluster 
models; more detail and proofs can be found in Grimmett's 
monograph~\cite{Grimmett}.

\paragraph{Definition of the random-cluster model.}

The random-cluster measure can be defined on any graph. However, we will 
restrict ourselves to the square lattice (of mesh size $1$), or more 
precisely a version rotated by an angle $\pi/4$, see 
Figure~\ref{fig:lattice_dual}. We denote this lattice by $\mathbb{L} = 
(\mathbb{V},\mathbb{E})$, with $\mathbb{V}$ denoting the set of 
\emph{sites} and $\mathbb{E}$ the set of \emph{edges}. In this paper, 
$G$ will always denote a connected subgraph of $\mathbb{L}$, \ie\  a 
subset of vertices of $\mathbb{V}$ together with all the edges between 
them. We denote by $\partial G$ the boundary of $G$, \ie\  the set of 
sites of $G$ linked by an edge of $\mathbb{E}$ to a site of 
$\mathbb{V}\setminus G$.

A \emph{configuration} $\omega$ on $G$ is a subgraph of $G$, composed of 
the same sites and a subset of its edges. We will call the edges 
belonging to $\omega$ \emph{open}, the others \emph{closed}. Two sites 
$a$ and $b$ are said to be \emph{connected} if there is an \emph{open 
path}, \ie\ a path composed of open edges only, connecting them (this 
event will be denoted by $a\leftrightarrow b$). Two sets $A$ and $B$ are 
\emph{connected} if there exists an open path connecting them (denoted 
$A\leftrightarrow B$). The maximal connected components will be called 
\emph{clusters}. We will often simply use the term \emph{path} for 
\emph{open path} when there is no possible ambiguity.

A \emph{boundary condition} $\xi$ is a partition of $\partial G$. We 
denote by $\omega \cup \xi$ the graph obtained from the configuration 
$\omega$ by identifying (or \emph{wiring}) the edges in $\xi$ that 
belong to the same component of $\xi$. Boundary conditions should be 
understood as encoding how sites are connected outside $G$. Let 
$o(\omega)$ (resp. $c(\omega)$) denote the number of open (resp.\ 
closed) edges of $\omega$ and $k(\omega,\xi)$ the number of connected 
components of $\omega\cup\xi$.  The probability measure 
$\phi^{\xi}_{p,q,G}$ of the random-cluster model on $G$ with parameters 
$p$ and $q$ and boundary condition $\xi$ is defined by
\begin{equation}
  \label{probconf}
  \phi_{p,q,G}^{\xi} (\left\{\omega\right\}) :=
  \frac {p^{o(\omega)}(1-p)^{c(\omega)}q^{k(\omega,\xi)}}
  {Z_{p,q,G}^{\xi}}
\end{equation}
for every configuration $\omega$ on $G$, where $Z_{p,q,G}^{\xi}$ is a 
normalizing constant referred to as the \emph{partition function}. 

Let $t<x$ and $y<z$; we will identify the rectangle $[t,x)\times[y,z)$ 
with the set of vertices in $\mathbb{V}$ that lie within it. The graph 
with vertex set $[t,x)\times[y,z)$ together with the induced subset of 
$\mathbb{E}$ is called a \emph{rectangle of \,$\mathbb{L}$}.

\begin{figure}[ht!]
    \begin{center}
     \includegraphics[width=\hsize]{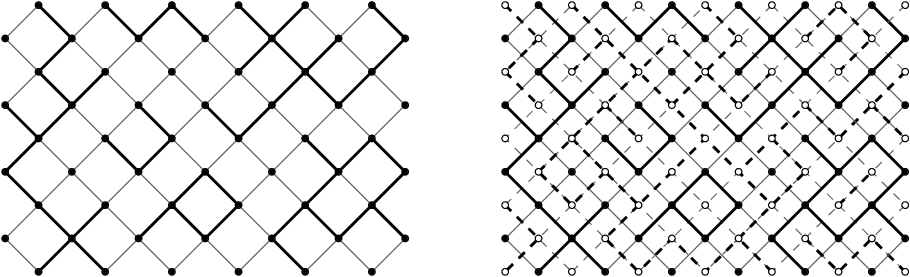}
    \end{center}
    \caption{\textbf{Left:} Example of a configuration on the rotated 
    lattice.  \textbf{Right:} A configuration together with its dual 
    configuration.}
    \label{fig:lattice_dual}
  \end{figure}

\paragraph{The finite energy property.}

This is a very simple property of random-cluster models. Let 
$\epsilon\in(0,1/2)$. The conditional probability for an edge to be 
open, knowing the states of all the other edges, is bounded away from 
$0$ and $1$ uniformly in $p\in(\epsilon,1-\epsilon)$ and in the 
configuration away from this edge. This property extends to any finite 
family of edges.

\paragraph{The domain Markov property.}

One can encode, using appropriate boundary condition $\xi$, the 
influence of the configuration outside a sub-graph on the measure within 
it.  Consider a graph $G=(V,E)$ and a random-cluster measure 
$\phi^{\psi}_{p,q,G}$ on it.  For $F\subset E$, consider $G'$ with $F$ 
as the set of edges and the endpoints of it as the set of sites. Then, 
the restriction to $G'$ of \smash{$\phi^{\psi}_{p,q,G}$} conditioned to 
match some configuration $\omega$ outside $G'$ is exactly 
\smash{$\phi_{p,q,G'}^{\xi}$}, where $\xi$ describes the connections 
inherited from $\omega\cup \psi$ (two sites are wired if they are 
connected by a path in $\omega\cup\psi$ outside the box).

\paragraph{The FKG inequality and comparison between boundary 
conditions.}

An event is called \emph{increasing} if it is preserved by addition of 
open edges, see~\cite{Grimmett}. Random-cluster models with parameter 
$q\geq 1$ are \emph{positively correlated}. This property has two 
important consequences,  the first one being the 
\emph{Fortuin-Kasteleyn-Ginibre inequality}:
\begin{equation}\label{FKG}
  \phi_{p,q,G}^{\xi}(A\cap B)\geq 
  \phi_{p,q,G}^{\xi}(A)\phi_{p,q,G}^{\xi}(B),
\end{equation}
which holds for every pair of increasing events $A$ and $B$ and any 
boundary conditions $\xi$. This correlation inequality is extremely 
important in the study of random-cluster models.

The second property is a \emph{comparison between boundary conditions}: 
for any boundary conditions $\psi\leq\xi$, meaning that sites wired in 
$\psi$ are wired in $\xi$, we have
\begin{equation}\label{comparison_between_boundary_conditions}
  \phi^{\psi}_{p,q,G}(A)\leq \phi^{\xi}_{p,q,G}(A)
\end{equation}
for any increasing event $A$. We say that $\phi_{p,q,G}^\xi$ 
\emph{stochastically dominates} $\phi_{p,q,G}^\psi$. Combined with the 
domain Markov property, the comparison between boundary conditions 
allows to give bounds on conditional probabilities.

\paragraph{Examples of boundary conditions: free, wired and periodic.}

Two boundary conditions play a special role in the study of the
random-cluster model. The \emph{wired} boundary condition, denoted by 
$\phi_{p,q,G}^1$, is specified by the fact that all the vertices on the 
boundary are pairwise wired (only one set in the partition). The 
\emph{free} boundary condition, denoted by $\phi_{p,q,G}^0$, is 
specified by no wiring between sites. These boundary conditions are 
extremal for the stochastic ordering, since any boundary condition has 
fewer (resp.\ more) wired sites than in the wired (resp.\ free) boundary 
condition.

We will also consider \emph{periodic} boundary conditions: for $n\geq1$ (not necessarily integer), 
the torus of size $n$ can be seen as the box $[0,n)^2$ with the boundary 
condition obtained by imposing that $(i,0)$ is wired to $(i,n)$ for 
every $i\in[0,n]$ and that $(0,j)$ is connected to $(n,j)$ for every 
$j\in[0,n]$.  We will denote the random-cluster measure on the torus of 
size $n$ by $\phi_{p,q,[0,n]^2}^{\rm p}$ or more concisely 
$\phi_{p,q,n}^{\rm p}$.  Note that this realization of the torus 
provides us with a natural embedding in the plane (though of course the 
boundary condition cannot be realized using disjoint paths outside the 
square $[0,n]^2$ because the torus itself is not a planar graph).

\paragraph{Dual graph and planar duality.} 

In two dimensions, one can associate with any random-cluster model a
dual model. Let $G$ be a finite graph embedded in a surface.  Define the
dual graph $G^{\star}=(V^{\star},E^{\star})$ in the usual way as
follows: place a dual site at the centers of the faces of $G$ (the
external face, when considering a graph on the plane, must be counted as
a face of the graph), and for every bond $e\in E$, place a dual bond
between the two dual sites corresponding to faces bordering $e$. Given a
subgraph configuration $\omega$, construct a bond model on $G^\star$ by
declaring any bond of the dual graph to be open (resp.\ closed) if the
corresponding bond of the primal lattice is closed (resp.\ open) for the
initial configuration. The new configuration is called the \emph{dual
  configuration} of $\omega$.

When defining the dual of the FK model, one must be careful about
boundary conditions (it will be crucial in this article). We first
recall the classic case: consider the random-cluster measure with
parameters $(p,q)$ on the square of size $n$, with wired boundary
conditions --- which can be realized as an FK model on a slightly larger
square, conditioned to have all the bonds outside the smaller square
open. The dual model on the dual graph (which is a square with an
additional outer vertex) given by the dual configurations then
corresponds to a random-cluster measure with free boundary conditions,
with the same parameter $q$ and a dual parameter $p^\star =
p^\star(p,q)$ satisfying $$p^\star(p,q) := \frac{(1-p)q}{(1-p)q+p}, \;
\text{or equivalently} \; \frac{p^\star p}{(1-p^\star)(1-p)}=q.$$ In
other words, the dual measure $(\phi_{p,q,n}^{1})^{\star}$ of
$\phi_{p,q,n}^{1}$ is $\phi_{p^{\star},q,n-1}^{0}$.  This relation is an
instance of \emph{planar duality}. It is then natural to define the
self-dual point $p_{sd}=p_{sd}(q)$ by solving the equation
$p^\star(p_{sd},q)=p_{sd}$, thus obtaining
\begin{equation}
  p_{sd}(q)=\frac{\sqrt{q}}{1+\sqrt{q}}.
\end{equation}
Similarly, the dual of a random-cluster model with parameters $(p,q)$
and free boundary conditions is a random-cluster model with parameters
$(p^\star,q)$ and wired boundary conditions.

\paragraph{Planar duality for periodic boundary conditions.}

The case of periodic boundary conditions, or equivalently the case of
the random-cluster model defined on a torus (with no boundary condition)
is a little more involved: indeed, its dual is \emph{not} a
random-cluster model; but it is not very different from one, and that
will be enough for our purposes. To state duality in this case, we need
additional notations. Recall that if $\omega$ is a configuration,
$o(\omega)$ stands for the number of open bonds in $\omega$, $c(\omega)$
for the number of closed bonds and $k(\omega)$ for the number of
connected components of $\omega$; let $f(\omega)$ be the number of faces
delimited by $\omega$, \ie\ the number of connected components of the
complement of the set of open bonds, and $s(\omega)$ be the number of
vertices in the underlying graph (it does not depend on $\omega$). We
will now define an additional parameter $\delta(\omega)$.

Call a (maximal) connected component of $\omega$ a \emph{net} if it
contains two non-contractible simple loops of different homotopy
classes, and a \emph{cycle} if it is non-contractible but is not a
net. Notice that every configuration $\omega$ can be of one of three
types:
\begin{itemize}
\item One of the clusters of $\omega$ is a net. Then no other cluster of
  $\omega$ can be a net or a cycle. In that case, we let
  $\delta(\omega)=2$;
\item One of the clusters of $\omega$ is a cycle. Then no other cluster
  can be a net, but other clusters can be cycles as well (in which case
  all the involved, simple loops are in the same homotopy class). We
  then let $\delta(\omega)=1$;
\item None of the clusters of $\omega$ is a net or a cycle. We let
  $\delta(\omega)=0$.
\end{itemize}
With this additional notation, Euler's formula becomes
\begin{equation}
  \label{eq:euler}
  s(\omega) - o(\omega) + f(\omega) = k(\omega) + 1 - \delta(\omega).
\end{equation}
Besides, these terms transform in a simple way under duality:
$o(\omega)+o(\omega^\star)$ is a constant, $f(\omega) = k(\omega^\star)$
and $\delta(\omega) = 2 - \delta(\omega^\star)$. The same proof as that
of usual duality, taking the additional topology into account, then
leads to the relation
\begin{equation}
  \label{eq:dualperio}
  (\phi_{p,q,n}^{\mathrm p})^\star(\{\omega\}) \propto
  q^{1-\delta(\omega)} \phi_{p^\star,q,n}^{\mathrm p} (\{\omega\}).
\end{equation}
This means that even though the dual model of the periodic boundary
condition FK model is not exactly an FK model at the dual parameter, it
is absolutely continuous with respect to it and the Radon-Nikodym
derivative is bounded above and below by constants depending only on
$q$. Another way of stating the same result would be to define a
\emph{balanced FK model} with weights $$\tilde\phi_{p,q,n}^{\mathrm p}
(\{\omega\}) = \frac {(\sqrt q)^{1-\delta(\omega)}} {Z} \phi_{p,q,n}
^{\mathrm p} (\{\omega\}):$$ this one is absolutely continuous with
respect to the usual FK model and does satisfy exact duality.

\paragraph{Infinite-volume measures and the definition of the critical 
point.}

The domain Markov property and comparison between boundary conditions
allow us to define infinite-volume measures. Indeed, one can consider a
sequence of measures on boxes of increasing size with free boundary
conditions. This sequence is increasing in the sense of stochastic
domination, which implies that it converges weakly to a limiting
measure, called the random-cluster measure on $\mathbb{L}$ with free
boundary condition (and denoted by $\ph^0$). This classic construction
can be performed with many other sequences of measures, defining several
\emph{a priori} different infinite-volume measures on $\mathbb{L}$. For
instance, one can define the random-cluster measure $\ph^1$ with wired
boundary condition, by considering the decreasing sequence of
random-cluster measures on finite boxes with wired boundary condition.

For given $q\geq 1$, it is known that uniqueness can fail only for $p$ 
in a countable set $\mathcal{D}_q$, see Theorem (4.60) of 
\cite{Grimmett}.  Therefore, there exists a \emph{critical point} $p_c$ 
such that for any infinite-volume measure with $p<p_c$ (resp.\ $p>p_c$), 
there is almost surely no infinite component of connected sites (resp.\ 
at least one infinite component).

\begin{remark}\label{conjecture_uniqueness}
  It is natural to conjecture that the critical point satisfies 
  $p_c=p_{sd}$. Indeed, if one assumes $p_c\neq p_{sd}$, there would be 
  two phase transitions, one at $p_c$, due to the change of behavior in 
  the primal model, and one at $p_c^\star$, due to the change of 
  behavior in the dual model.
\end{remark}

\paragraph{The inequality $p_c\geq p_{sd}$.}

As in the case of percolation, a lower bound for the critical value can 
be derived using the uniqueness of the infinite cluster above the 
critical point.  Indeed, if one assumes that $p_c<p_{sd}$, the 
configuration at $p_{sd}$ must contain one infinite open cluster and one 
infinite dual open cluster (since the random-cluster model in the dual 
is then super-critical as well). Intuition indicates that such 
coexistence would imply that there is more than one infinite open 
cluster; an elegant argument (due to Zhang in the case of percolation) 
formalizes this idea.  We refer to the exposition in Theorem (6.17) 
of~\cite{Grimmett} for full detail, but still give a sketch of the 
argument.

The proof goes as follows, see Figure~\ref{fig:Zhang}. Assume that 
$p_c<p_{sd}$ and consider the random-cluster model with $p=p_{sd}$.  
There is an infinite open cluster, and therefore, we can choose a large 
box such that the infinite open cluster and the dual infinite open 
cluster touch the boundary with probability greater than $1-\ep$.  The 
FKG inequality (through the so-called ``square-root trick'': for two increasing events $A$ and $B$ with same probability, $\phi^{\xi}_{p,q,G}(A\cap B)\geq 1-(1-\phi^{\xi}_{p,q,G}(A))^{1/2}$) implies 
that the infinite open cluster actually touches the top side of the box, 
using only edges outside the box, with probability greater than 
$1-\ep^{1/4}$. We deduce that with probability at least $1-2\ep^{1/4}$, 
the infinite open cluster touches both the top and bottom sides, using 
only edges outside of the box.

A similar argument implies that the infinite dual open cluster touches 
both the left- and right-hand sides of the box with probability at least 
$1-2\ep^{1/4}$.  Therefore, with probability at least $1-4\ep^{1/4}$, 
the complement of the box contains an infinite open path touching the 
top of the box, one touching the bottom, and infinite dual open paths 
touching each of the vertical edges.  Enforcing edges in the box to be 
closed, which brings only a positive multiplicative factor due to the 
finite energy property of the model, and choosing $\ep$ sufficiently 
small, we deduce that there are two infinite open clusters with positive 
probability.  Since the infinite open cluster must be unique (see \cite{Grimmett} again), this is a 
contradiction, which implies that $p_c\geq p_{sd}$.

\begin{figure}[ht!]
    \begin{center}
    \includegraphics[width=1.0\hsize]{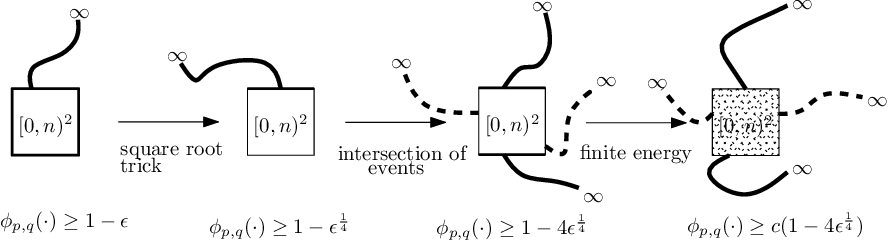} \end{center}
    \caption{The steps of the proof that $p_c\geq p_{sd}$.}
    \label{fig:Zhang}
  \end{figure}
\bigskip

When $p<p_{sd}\leq p_c$, there is no infinite cluster for any 
infinite-volume measure. General arguments imply uniqueness of the 
infinite-volume measure whenever $p\neq p_{sd}$ and $q\geq 1$ (see 
Theorem~(6.17) of \cite{Grimmett}). This fact will be useful in the 
sequel since, \emph{except at criticality}, we do not have to specify 
which infinite-volume measure is under consideration. We will denote the 
unique infinite-volume measure by $\phi_{p,q}$ when $p\neq p_{sd}$.

\section{Crossing probabilities for rectangles at the self-dual point}
\label{sec:crossings}

In this section, we prove crossing estimates for rectangles of 
prescribed aspect ratio. This is an extension of the Russo-Seymour-Welsh 
theory for percolation. We will work with $p=p_{sd}(q)$ and the measures 
$\phi_{p_{sd},q}^1$ and $\phi_{p_{sd},q,n}^{\rm p}$; we present the 
proof in the periodic case. The case of the (bulk) wired boundary 
condition can be derived from this case (see Corollary~\ref{RSW_bulk}).

For a rectangle $R$, let $\C_v(R)$ denote the event that there exists a 
path between the top and the bottom sides which stays inside the 
rectangle. Such a path is called a \emph{vertical (open) crossing} of 
the rectangle. Similarly, we define $\C_h$ to be the event that there 
exists an \emph{horizontal open crossing} between the left and the right 
sides.  Finally, $\C_v^\star(R^\star)$ denotes the event that there 
exists a dual-open crossing from top to bottom in the dual graph 
$R^\star$ of $R$.

The following theorem states that, at the self-dual point, the 
probability of crossing a rectangle horizontally is bounded away from 
$0$ uniformly in the sizes of both the rectangle and the torus provided 
that the aspect ratio of the rectangles remains constant. The size of 
the ambient torus is denoted by $m$. Note that $p=p^\star$ when 
$p=p_{sd}$, and hence the balanced FK measure on the torus is 
self-dual.

\begin{theorem}\label{RSW_torus}
  Let $\al>1$ and $q\geq 1$. There exists $c(\al)>0$ such that for every 
  $m>\alpha n>0$,
  \begin{equation}
    \phi_{p_{sd},q,m}^{\rm p}\big(\C_h([0,\al n)\times[0,n))\big)\geq 
    c(\al).
  \end{equation}
\end{theorem}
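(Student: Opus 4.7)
The plan follows the classical Russo-Seymour-Welsh strategy, adapted to the random-cluster model on the torus. First I would replace the periodic FK measure $\phi^{\rm p}_{p_{sd},q,m}$ by the balanced measure $\tilde\phi^{\rm p}_{p_{sd},q,m}$ of \eqref{eq:dualperio}: since its Radon-Nikodym derivative with respect to $\phi^{\rm p}$ is bounded above and below by constants depending only on $q$, this step costs only a $q$-dependent multiplicative factor in any crossing probability while gaining exact self-duality at $p=p_{sd}$. Combining the $\pi/2$-rotation and translation invariances of $\mathbb L$ and of the torus with self-duality gives $\tilde\phi^{\rm p}(\C_h(S))=\tilde\phi^{\rm p}(\C_v^\star(S^\star))$ for any square $S=[a,a+n)^2$ of side $n$. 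As long as $m>n$, such a square sits strictly inside the torus, so planar duality purely inside $S$ forces exactly one of $\C_h(S)$ and $\C_v^\star(S^\star)$ to occur; hence $\tilde\phi^{\rm p}(\C_h(S))=1/2$ and $\phi^{\rm p}(\C_h(S))\geq c_0(q)>0$ uniformly in $n<m$.

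The second step is the RSW gluing proper: upgrade the square-crossing estimate to a lower bound on the crossing probability of a rectangle of aspect ratio $3/2$, say $[0,3n/2)\times[0,n)$. I would consider two overlapping squares $S_n$ and $S_n+(n/2,0)$, each crossed horizontally with probability at least $c_0(q)$, and glue their crossings by conditioning on the lowest horizontal crossing $\gamma$ of one of the squares and applying a reflection argument that exploits the $\pi/2$-rotation symmetry to produce a vertical path in the overlap connecting $\gamma$ to the second crossing. The FKG inequality \eqref{FKG} and the square-root trick (as in the proof of $p_c\geq p_{sd}$ recalled in the previous section) are the combinatorial tools that keep the resulting bound uniform in $n$ and $m$.

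Iterating this gluing a bounded number of times, depending only on $\al$, extends the estimate to any rectangle of aspect ratio $\al>1$, yielding the desired $c(\al)>0$. The main obstacle is the RSW step itself: in Bernoulli percolation, conditioning on the lowest crossing leaves the edges above it as independent Bernoulli variables, so reflection arguments are immediate, whereas for the random-cluster model the conditional law above $\gamma$ is an FK measure with boundary conditions wired along $\gamma$ and inherited from the torus elsewhere, and these must be controlled uniformly in $m$ via the comparison between boundary conditions \eqref{comparison_between_boundary_conditions} and positive association. The periodic setup is chosen precisely to prevent any further boundary conditions from appearing at the boundary of the rectangle itself.
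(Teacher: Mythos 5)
Your plan has the right skeleton --- square crossings from self-duality on the torus (the content of Lemma~\ref{square_crossings}), then an RSW-type gluing to reach aspect ratio $3/2$, then FKG iteration to arbitrary $\alpha$ --- but the key step, the gluing itself, is only gestured at and not carried out. You correctly identify the obstacle: after exposing the extremal crossing $\gamma$ of the first square, the conditional law on the undiscovered region is an FK measure whose boundary condition along $\gamma$ is wired but is intricate and potentially very unfavorable elsewhere. The two tools you invoke to ``control'' this --- comparison between boundary conditions \eqref{comparison_between_boundary_conditions} and positive association --- give only that the conditional measure dominates the free measure, which carries no crossing lower bound at the self-dual point. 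This is precisely the place where the classical Seymour--Welsh gluing, which for Bernoulli percolation exploits independence to reflect the explored arc, breaks down for the random-cluster model, and nothing in your sketch replaces that mechanism.

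The paper's resolution is a specific and genuinely new construction that is absent from your proposal. After conditioning on the top-most crossing $\Gamma_1=\gamma_1$ of $R_1=[0,n)^2$ and a right-most vertical crossing $\Gamma_2=\gamma_2$ of $R_2=[n/2,3n/2)\times[0,n)$, one reflects these explored arcs across a vertical line $d$ through the orthogonal symmetry $\sigma_d$, which maps $\mathbb L$ to $\mathbb L^\star$ --- this is a reflection, not the $\pi/2$-rotation you mention; the rotation preserves the primal lattice, whereas the whole point here is to send a primal arc to a dual one. This yields a symmetric domain $G_0(\gamma_1,\gamma_2)$ that lies entirely in the unexplored region. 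Endowing it with the \emph{mixed} boundary condition (wired separately along $\gamma_1$ and $\gamma_2$, free on the reflected arcs), Lemma~\ref{self-duality} shows that the $\sigma_d$-dual of this measure equals the mixed measure up to a Radon--Nikodym factor in $[q^{-2},q^{2}]$, because the only discrepancy is whether $\gamma_1$ and $\gamma_2$ are wired together or not. This yields the crucial uniform lower bound $\phi_{\gamma_1,\gamma_2}(\gamma_1\leftrightarrow\gamma_2)\geq 1/(1+q^2)$. Only \emph{then} does comparison between boundary conditions enter: the true conditional measure on $G_0$ dominates the mixed one, since the arcs along $\gamma_1,\gamma_2$ are forced open and free is a universal lower bound elsewhere, so the conditional connection probability inherits the same bound. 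Without Lemma~\ref{self-duality} and this reflected self-dual domain your proposal has no way to turn the exploration into a quantitative crossing estimate, so the heart of the argument is missing.
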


We begin the proof with a lemma, which corresponds to the existence of 
$c(1)$ and is based on the self-duality of random-cluster measures on 
the torus.  This lemma is classic and is the natural starting point for 
any attempt to prove RSW-like estimates.

\begin{lemma}\label{square_crossings}
  Let $q\geq 1$, there exists $c(1)>0$ (depending only on the parameter
  $q$) such that for every $m>n\geq 1$, $\displaystyle
  \phi_{p_{sd},q,m}^{\rm p}\big(\C_h([0,n)^2)\big)\geq c(1)$.
\end{lemma}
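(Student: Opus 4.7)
The plan is to exploit the self-duality of the random-cluster measure on the torus at $p=p_{sd}$, together with the $\pi/2$-rotational and translation symmetries of the torus. Set $B:=[0,n)^2$; since $n<m$, the box $B$ is contractible in the torus, so planar duality restricted to $B$ yields the deterministic identity $\C_h(B)^c = \C_v^\star(B^\star)$, where $B^\star$ denotes the dual rectangle. Hence
\[
  \phi_{p_{sd},q,m}^{\mathrm p}(\C_h(B)) + \phi_{p_{sd},q,m}^{\mathrm p}(\C_v^\star(B^\star)) = 1,
\]
and the whole task reduces to bounding the second term by a $q$-dependent multiple of the first.

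To do so, I would transfer the dual crossing back to a primal one. Under $\phi_{p_{sd},q,m}^{\mathrm p}$, the dual configuration $\omega^\star$ has law $(\phi_{p_{sd},q,m}^{\mathrm p})^\star$, and since $p_{sd}^\star = p_{sd}$, formula~(\ref{eq:dualperio}) gives
\[
  (\phi_{p_{sd},q,m}^{\mathrm p})^\star(\{\omega\}) \;\propto\; q^{\,1-\delta(\omega)}\, \phi_{p_{sd},q,m}^{\mathrm p}(\{\omega\})
\]
once one identifies the dual torus with the primal one through the natural self-isomorphism of the rotated square lattice. As $\delta(\omega)\in\{0,1,2\}$, the weight $q^{1-\delta}$ lies in $[1/q,q]$, so the Radon-Nikodym derivative is bounded above by a constant $C(q)$ depending only on $q$. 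Consequently
\[
  \phi_{p_{sd},q,m}^{\mathrm p}(\C_v^\star(B^\star)) \;\leq\; C(q)\, \phi_{p_{sd},q,m}^{\mathrm p}(\C_v(\tilde B)),
\]
where $\tilde B$ is the image of $B^\star$ under that isomorphism, itself an $n\times n$ square up to a one-unit shift. A rotation of the torus by $\pi/2$ followed by a translation maps $\tilde B$ onto (a translate of) $B$ and swaps horizontal and vertical crossings, so by the rotational and translation invariance of $\phi_{p_{sd},q,m}^{\mathrm p}$ one has $\phi_{p_{sd},q,m}^{\mathrm p}(\C_v(\tilde B)) = \phi_{p_{sd},q,m}^{\mathrm p}(\C_h(B))$. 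Combining everything yields $1 \leq (1+C(q))\, \phi_{p_{sd},q,m}^{\mathrm p}(\C_h(B))$, so $c(1):=1/(1+C(q))$ works.

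The main subtlety will be the careful use of~(\ref{eq:dualperio}): the periodic-boundary FK measure is not exactly self-dual at $p_{sd}$ because of the extra $q^{1-\delta(\omega)}$ factor, so one only obtains \emph{comparability} between primal and dual probabilities rather than equality. Fortunately, this still yields a lower bound, which is all that is needed. The only remaining bookkeeping is to check that the dual of an $n\times n$ box on the rotated lattice is, up to a one-unit shift and the lattice's self-isomorphism, another $n\times n$ box to which the torus's $\pi/2$-rotational symmetry applies.
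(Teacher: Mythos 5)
Your proposal is correct and follows essentially the same route as the paper: the deterministic identity $\C_h(B)^c=\C_v^\star(B^\star)$, the comparability of the dual measure with the primal one at $p_{sd}$ via the $q^{1-\delta(\omega)}$ Radon--Nikodym bound from the periodic-duality formula, and the $\pi/2$ rotation plus translation symmetry of the torus to identify the transferred crossing probability with $\phi_{p_{sd},q,m}^{\mathrm p}(\C_h(B))$. Your write-up is in fact more explicit than the paper's own terse argument, and the final constant $c(1)=1/(1+C(q))$ is of the same form.
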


\begin{figure}[ht!]
  \begin{center}
    \includegraphics[width=\hsize]{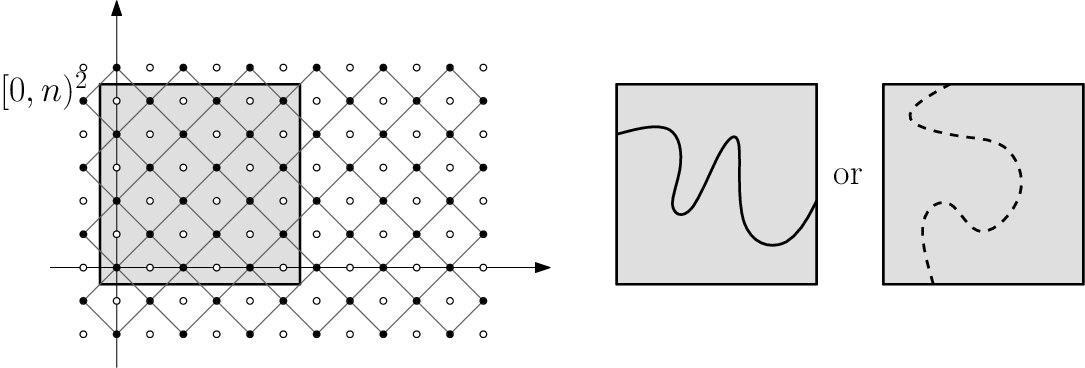}
  \end{center}
  \caption{\textbf{Left:} The square $[0,n)^2$ (all the sites in the shaded region) and its dual have the 
  same graph structure.  \textbf{Right:} The events $\C_h([0,n)^2)$ and 
  $\C_v^\star([0,n)^2)$.}
  \label{fig:square_symmetry}
\end{figure}

\begin{proof}
  Note that the dual of $[0,n)^2$ is $[0,n)^2$ (meaning the sites of the 
  dual torus inside $[0,n)^2$), see Figure~\ref{fig:square_symmetry}. If 
  there is no open crossing from left to right in $[0,n)^2$, there 
  exists necessarily a dual-open crossing from top to bottom in the dual 
  configuration. Hence, the complement of $\C_h([0,n)^2)$ is 
  $\C_v^\star([0,n)^2)$, thus yielding  \begin{equation*}
    \phi_{p_{sd},q,m}^{\rm 
    p}\big(\C_h([0,n)^2)\big)+\phi_{p_{sd},q,m}^{\rm 
    p}\big(\C_v^\star([0,n)^2)\big) = 1.
  \end{equation*}
  Using the duality property for periodic boundary conditions and the
  symmetry of the lattice, the probability $\phi_{p_{sd},q,m}^{\rm
    p}(\C_v^\star([0,n)^2))$ is larger than $c\phi_{p_{sd},q,m}^{\rm
    p}(\C_h([0,n)^2))$ (for some constant $c$ only depending on $q$),
  giving
  \begin{equation*}
    1\leq (1+c)\phi_{p_{sd},q,m}^{\rm p}\big(\C_h([0,n)^2)\big),
  \end{equation*}
  which concludes the proof.
\end{proof}

The only major difficulty is to prove that rectangles of aspect ratio
$\al$ are crossed in the horizontal direction --- with probability
uniformly bounded away from $0$ --- for \emph{some} $\al>1$. There are
many ways to prove this in the case of percolation. Nevertheless, they
always involve independence in a crucial way; in our case, independence
fails, so we need a new argument. The main idea is to invoke
self-duality in order to enforce the existence of crossings, even in the
case where boundary conditions could look disadvantageous. In order to
do that, we introduce the following family of domains, which are in some
sense nice symmetric domains.

\begin{figure}[ht!]
  \begin{center}
    \includegraphics[width=0.6\hsize]{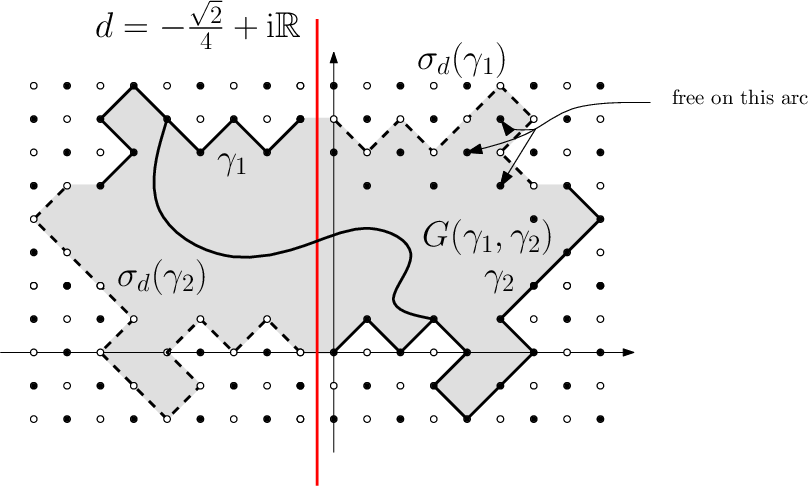}
  \end{center}
  \caption{Two paths $\g_1$ and $\g_2$ satisfying Hypothesis $(\star)$ 
  and the graph $G(\g_1,\g_2)$.}
  \label{fig:symmetric_domain}
\end{figure}

Define the line $d:=-\sqrt{2}/4+{\rm i}\R$. The orthogonal symmetry 
$\sigma_d$ with respect to this line maps $\mathbb{L}$ to 
$\mathbb{L}^\star$. Let $\g_1$ and $\g_2$ be two paths satisfying the 
following Hypothesis $(\star)$, see Figure~\ref{fig:symmetric_domain}:
\begin{itemize}
  \item $\g_1$ remains on the left of $d$ and $\g_2$ remains on the 
    right;
  \item $\g_2$ begins at $0$ and $\g_1$ begins on a site of 
    $\mathbb{L}\cap (-\sqrt 2/ 2+{\rm i}\mathbb{R}_+)$;
  \item $\g_1$ and $\sigma_d(\g_2)$ do not intersect (as curves in the 
    plane);
  \item $\g_1$ and $\sigma_d(\g_2)$ end at two sites (one primal and one 
    dual) which are at distance $\sqrt2/2$ from each other.
\end{itemize}
The definition extends trivially via translation, so we will say that 
the pair $(\g_1,\g_2)$ satisfies Hypothesis $(\star)$ if one of its 
translations does.

When following the paths in counter-clockwise order, we can create a
circuit by linking the end points of $\g_1$ and $\sigma_d(\g_2)$ by a
straight line, the start points of $\sigma_d(\g_2)$ and $\g_2$, the end
points of $\g_2$ and $\sigma_d(\g_1)$, and the start points of
$\sigma_d(\g_1)$ and $\g_1$. The circuit $(\g_1,\sigma_d(\g_2), \g_2,
\sigma_d(\g_1))$ surrounds a set of vertices of $\mathbb{L}$. Define the
graph $G(\g_1,\g_2)$ composed of sites of $\mathbb{L}$ that are
surrounded by the circuit $(\g_1,\sigma_d(\g_2),\g_2,\sigma_d(\g_1))$,
and of edges of $\mathbb{L}$ that remain entirely within the circuit
(boundary included).

The \emph{mixed boundary condition} on this graph is wired on $\g_1$ 
(all the edges are pairwise connected), wired on $\g_2$, and free 
elsewhere.  We denote the measure on $G(\g_1,\g_2)$ with parameters 
$(p_{sd},q)$ and mixed boundary condition by $\phi_{p_{sd},q,\g_1,\g_2}$ 
or more simply $\phi_{\g_1,\g_2}$.

\begin{lemma}\label{self-duality}
  For any pair $(\g_1,\g_2)$ satisfying Hypothesis $(\star)$, the 
  following estimate holds: $$\phi_{\g_1,\g_2}(\g_1\leftrightarrow 
  \g_2)\geq\frac{1}{1+q^2}.$$
\end{lemma}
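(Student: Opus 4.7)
\medskip

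\emph{Proof plan.} The plan is to exploit the $\sigma_d$-reflection symmetry of the domain $G(\g_1,\g_2)$ together with the self-duality of the random-cluster measure at $p_{sd}$. As a first step, I would use planar duality inside the disk-topology domain: the event $\{\g_1\not\leftrightarrow\g_2\}$ in a primal configuration $\omega$ is exactly the event that the dual configuration $\omega^\star$ contains an open dual path from the dual arc $\sigma_d(\g_1)$ to the dual arc $\sigma_d(\g_2)$, since any such blocking dual path must terminate on the two dual arcs of the boundary circuit that defines $G(\g_1,\g_2)$.

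Next, I would transport this dual event back to a primal event via the involution $\tau(\omega):=\sigma_d(\omega^\star)$. Hypothesis $(\star)$ is designed precisely so that $\sigma_d$ identifies $G(\g_1,\g_2)^\star$ with $G(\g_1,\g_2)$ (swapping primal arcs with dual arcs), so $\tau$ is a bijection on primal configurations of $G(\g_1,\g_2)$; by the duality above, it sends the event $\{\g_1\leftrightarrow\g_2\}$ bijectively onto its complement. Summing weights along this bijection gives
\[
\phi_{\g_1,\g_2}(\g_1\not\leftrightarrow\g_2) \;=\; \sum_{\omega\,:\,\g_1\leftrightarrow\g_2} \phi_{\g_1,\g_2}(\{\tau(\omega)\}),
\]
so it suffices to establish the pointwise weight ratio
\[
\frac{\phi_{\g_1,\g_2}(\{\tau(\omega)\})}{\phi_{\g_1,\g_2}(\{\omega\})} \;\leq\; q^2
\]
uniformly on $\{\g_1\leftrightarrow\g_2\}$, since this immediately implies $\phi_{\g_1,\g_2}(\g_1\not\leftrightarrow\g_2)\leq q^2\,\phi_{\g_1,\g_2}(\g_1\leftrightarrow\g_2)$.

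The main obstacle is verifying this pointwise bound; this is the only place where the factor of $q^2$ must be paid. Because $o(\tau(\omega))=c(\omega)$ and $p_{sd}/(1-p_{sd})=\sqrt q$, the edge contribution to the weight ratio is $q^{(c(\omega)-o(\omega))/2}$, while the cluster contribution is $q^{k^{\mathrm{mix}}(\tau(\omega)) - k^{\mathrm{mix}}(\omega)}$, where $k^{\mathrm{mix}}$ counts clusters once the mixed boundary condition is applied. Using that $\omega$ lies in $\{\g_1\leftrightarrow\g_2\}$ while $\tau(\omega)$ lies in its complement, one has $k^{\mathrm{mix}}(\omega)=d(\omega)+1$ and $k^{\mathrm{mix}}(\tau(\omega))=d(\tau(\omega))+2$, where $d(\cdot)$ denotes the number of primal clusters touching neither $\g_1$ nor $\g_2$. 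A direct application of Euler's formula on the disk $G(\g_1,\g_2)$ then relates $d(\omega)-d(\tau(\omega))$ to $(o(\omega)-c(\omega))/2$ up to an additive constant of absolute value at most $1$, which forces the full exponent $(c(\omega)-o(\omega))/2 + k^{\mathrm{mix}}(\tau(\omega)) - k^{\mathrm{mix}}(\omega)$ to be at most $2$. Combining this with the complementarity $\phi_{\g_1,\g_2}(\g_1\leftrightarrow\g_2)+\phi_{\g_1,\g_2}(\g_1\not\leftrightarrow\g_2)=1$ yields the claimed lower bound $\phi_{\g_1,\g_2}(\g_1\leftrightarrow\g_2)\geq 1/(1+q^2)$.
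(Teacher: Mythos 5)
Your overall strategy is the same as the paper's: use the $\sigma_d$ symmetry to set up self-duality, note that $\{\g_1\not\leftrightarrow\g_2\}$ corresponds to a dual connection, and pay a factor of $q^2$ to compare the dual measure with the mixed-boundary measure. The paper phrases this at the level of measures: it observes that $\sigma_d*\phi^\star_{\g_1,\g_2}$ is a random-cluster measure on $G(\g_1,\g_2)$ with boundary condition wired on $\g_1\cup\g_2$ \emph{together}, and bounds its Radon--Nikodym derivative against $\phi_{\g_1,\g_2}$ by $q^2$ (cluster count differs by at most $1$, partition function ratio in $[1/q,q]$). Your pointwise weight-ratio bound along the bijection $\tau$ is exactly the reciprocal of that Radon--Nikodym derivative evaluated at $\tau(\omega)$, so the two formulations are equivalent.

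However, as written, your verification of the pointwise bound does not close. You claim Euler's formula gives $d(\omega)-d(\tau(\omega)) = (o(\omega)-c(\omega))/2 + \delta$ with $|\delta|\leq 1$. Substituting this into the exponent $\tfrac{c(\omega)-o(\omega)}{2} + \big(d(\tau(\omega))+2\big) - \big(d(\omega)+1\big)$ yields $(c(\omega)-o(\omega)) - \delta + 1$, which is unbounded. The correct sign of the Euler relation is $d(\omega)-d(\tau(\omega)) = (c(\omega)-o(\omega))/2 + \delta$ (more open edges means fewer primal clusters, and under $\tau$ it becomes more dual clusters, so the difference tracks $(c-o)/2$, not $(o-c)/2$), and with that sign the exponent collapses to $1-\delta \leq 2$ as needed. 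Separately, in passing from the raw Euler identity, which is stated in terms of the total cluster and face counts, to the quantity $d(\cdot)$ counting only interior clusters, one must account for the clusters touching $\g_1$ or $\g_2$ under free versus mixed wiring; the number of those is not bounded a priori, and it is precisely the wiring that collapses them to $1$ or $2$. This boundary bookkeeping is exactly what the paper's measure-level argument sidesteps by invoking the exact duality identity $\sigma_d*\phi^\star_{\g_1,\g_2}(\{\tau(\omega)\}) = \phi_{\g_1,\g_2}(\{\omega\})$ and then comparing the two fixed boundary conditions (mixed versus fully wired) applied to the \emph{same} configuration $\tau(\omega)$, which is a one-line observation. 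I would recommend either fixing the sign and spelling out the boundary-cluster bookkeeping, or adopting the paper's cleaner route.
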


\begin{proof}
  On the one hand, if $\g_1$ and $\g_2$ are not connected, 
  $\sigma_d(\g_1)$ and $\sigma_d(\g_2)$ must be connected by a dual path 
  in the dual model (event corresponding to 
  $\sigma_d(\g_1)\leftrightarrow\sigma_d(\g_2)$ in the dual model).  
  Hence,
  \begin{equation}\label{self-duality1}
    1 = \phi_{\g_1,\g_2} (\g_1\leftrightarrow\g_2) + 
    \sigma_d*\phi^\star_{\g_1,\g_2} (\g_1\leftrightarrow\g_2),
  \end{equation}
  where $\sigma_d*(\phi_{\g_1,\g_2}^\star)$ denotes the image under 
  $\sigma_d$ of the dual measure of $\phi_{\g_1,\g_2}$. This
  measure lies on $G(\g_1,\g_2)$ as well and has parameters 
  $(p_{sd},q)$. 

  When looking at the dual measure of a random-cluster model, the 
  boundary condition is transposed into a new boundary condition for the 
  dual measure. In the case of the periodic boundary condition, we 
  obtained the same boundary condition for the dual measure. Here, the 
  boundary condition becomes wired on $\g_1\cup \g_2$ and free elsewhere 
  (this is easy to check using Euler's formula).

  It is very important to notice that the boundary condition is 
  \emph{not} exactly the mixed one, since $\g_1$ and $\g_2$ are wired 
  \emph{together}.  Nevertheless, the Radon-Nikodym derivative of 
  $\sigma_d*\phi_{\g_1,\g_2}^\star$ with respect to $\phi_{\g_1,\g_2}$ 
  is easy to bound. Indeed, for any configuration $\omega$, the number 
  of cluster can differ only by $1$ when counted in 
  $\sigma_d*\phi_{\g_1,\g_2}^\star$ or $\phi_{\g_1,\g_2}$ so that the 
  ratio of partition functions belongs to $[1/q,q]$. Therefore, the 
  ratio of probabilities of the configuration $\omega$ remains between 
  $1/q^2$ and $q^2$. This estimate extends to events by summing over all 
  configurations. Therefore, $$\sigma_d*\phi_{\g_1,\g_2}^\star 
  (\g_1\leftrightarrow\g_2) \leq q^2 \phi_{\g_1,\g_2} 
  (\g_1\leftrightarrow\g_2).$$ When plugging this inequality into 
  \eqref{self-duality1}, we obtain $$\phi_{\g_1,\g_2} 
  (\g_1\leftrightarrow\g_2) + q^2 \phi_{\g_1,\g_2} 
  (\g_1\leftrightarrow\g_2) \geq 1$$ which implies the claim.
\end{proof}

We are now in a position to prove the key result of this section.

\begin{proposition}\label{box_crossing}
  For all $m>3n/2>0$, the following holds: $$\phi_{p_{sd},q,m}^{\rm p} 
  \big[ \C^v\big([0,n)\times[0,3/2n)\big) \big] \geq 
  \frac{c(1)^3}{2(1+q^2)}.$$
\end{proposition}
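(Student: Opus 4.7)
The plan is to combine Lemma~\ref{square_crossings} and Lemma~\ref{self-duality} in a setup where the reflection line $d$ of Hypothesis~$(\star)$ is rotated to be horizontal rather than vertical, so that a $\gamma_1\leftrightarrow\gamma_2$ connection produced by self-duality is automatically a \emph{vertical} open path. This rotation is valid because $\mathbb{L}$ is invariant under $\pi/2$ rotations up to a shift, and the measure $\phi_{p_{sd},q,m}^{\rm p}$ on the torus is likewise invariant. Under this rotation I will build $\gamma_1$ (``above $d$'') and $\gamma_2$ (``below $d$'') from extremal horizontal crossings of two stacked sub-squares whose union covers $R=[0,n)\times[0,3n/2)$, and finally concatenate with $\gamma_1$ and $\gamma_2$ themselves to produce a full vertical crossing of $R$.

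Concretely, first I would look at the two squares $S_-=[0,n)\times[0,n)$ and $S_+=[0,n)\times[n/2,3n/2)$, which together cover $R$ with overlap $[0,n)\times[n/2,n)$. By the vertical version of Lemma~\ref{square_crossings} (inferred from the horizontal one by lattice symmetry) together with FKG, both squares admit vertical crossings simultaneously with $\phi_{p_{sd},q,m}^{\rm p}$-probability at least $c(1)^2$. On this event I would condition on the rightmost vertical crossing $\gamma_2$ of $S_-$ and the rightmost vertical crossing $\gamma_1$ of $S_+$. A standard right-to-left exploration shows that $\gamma_1$ and $\gamma_2$ are measurable with respect to the configuration to their right; the domain Markov property then identifies the conditional law on the region $G(\gamma_1,\gamma_2)$ to their left as the random-cluster measure with boundary condition wired on $\gamma_1$, wired on $\gamma_2$, and free on the left vertical side of $R$. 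After identifying the horizontal line through the centre of the overlap strip with $d$ and aligning endpoints, this is exactly the mixed boundary condition of Hypothesis~$(\star)$ in its rotated form.

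Next I would apply the (rotated) Lemma~\ref{self-duality} to obtain an open connection $\gamma_1\leftrightarrow\gamma_2$ inside $G(\gamma_1,\gamma_2)$ with conditional probability at least $1/(1+q^2)$. Concatenated with $\gamma_1$ and $\gamma_2$ this yields an open cluster reaching from the bottom of $S_-$ (and hence $y=0$) to the top of $S_+$ (and hence $y=3n/2-1$), which is a vertical crossing of $R$. The remaining $c(1)$ factor in the final bound $c(1)^3/(2(1+q^2))$ comes from a third application of Lemma~\ref{square_crossings} (combined with FKG) to guarantee an additional vertical sub-crossing that forces the endpoints of $\gamma_1$ and $\gamma_2$ into the range required by Hypothesis~$(\star)$; the factor $1/2$ arises from restricting, via a symmetry argument, to one of two mirror-equivalent admissible arrangements of the conditioned extremal crossings (the other one producing a vertical crossing of the horizontally reflected rectangle).

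The main obstacle will be the precise alignment between the random pair $(\gamma_1,\gamma_2)$ produced by the extremal-crossing construction and the deterministic geometric data demanded by Hypothesis~$(\star)$: its prescribed starting points and the exact placement of $d$ relative to the endpoints of $\gamma_1$ and $\sigma_d(\gamma_2)$. I expect to handle this by summing the argument over all admissible endpoint positions and invoking the comparison between boundary conditions~\eqref{comparison_between_boundary_conditions} to pass between the various wired/free mixed boundary measures that arise, so that the discrepancy is absorbed into universal constants already accounted for by the $c(1)^3/(2(1+q^2))$ bound.
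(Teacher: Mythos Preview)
There is a genuine gap: your setup cannot be made to satisfy Hypothesis~$(\star)$, and the domain you describe is not the one on which Lemma~\ref{self-duality} operates.

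In the rotated version of $(\star)$, $\gamma_1$ must lie entirely \emph{above} the horizontal line $d$ and $\gamma_2$ entirely \emph{below} it, both starting adjacent to $d$. Your $\gamma_1$ is a vertical crossing of $S_+=[0,n)\times[n/2,3n/2)$ and your $\gamma_2$ a vertical crossing of $S_-=[0,n)\times[0,n)$; with $d$ through the middle of the overlap, \emph{both} paths cross $d$. More fundamentally, the domain $G(\gamma_1,\gamma_2)$ in Lemma~\ref{self-duality} is not ``the region to the left of the two arcs'': it is the region enclosed by $\gamma_1$, $\gamma_2$ \emph{and their reflections} $\sigma_d(\gamma_1)$, $\sigma_d(\gamma_2)$. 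The lemma works precisely because that region is $\sigma_d$-symmetric, so that the dual measure (after reflection) lives on the same graph with a comparable boundary condition. Your domain is bounded by $\gamma_1$, $\gamma_2$ and the left side of $R$; it has no such symmetry, and the duality argument of Lemma~\ref{self-duality} does not apply to it. The hand-wave about ``aligning endpoints'' and ``summing over admissible positions'' does not repair this, because the obstruction is structural, not a matter of constants.

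The paper's proof avoids exactly this problem by an asymmetric construction: $\Gamma_1$ is the \emph{top-most horizontal} crossing of one square and $\Gamma_2$ the \emph{right-most vertical} crossing of the other from a prescribed half of the bottom side (this is where the $c(1)/2$ factor actually comes from). The reflection line $d$ is vertical, placed through the endpoint of $\Gamma_1$, so that $\Gamma_1$ automatically lies on one side of $d$; a surgical truncation then produces sub-paths $\tilde\gamma_1\subset\Gamma_1$ and $\tilde\gamma_2\subset\Gamma_2$ that do satisfy $(\star)$. The resulting symmetric domain $G(\tilde\gamma_1,\tilde\gamma_2)$ is shown to contain a sub-domain $G_0$ lying entirely in the region left unexplored by the two explorations, so that the conditional measure there \emph{dominates} (via comparison of boundary conditions) the mixed measure $\phi_{\tilde\gamma_1,\tilde\gamma_2}$, and only then is Lemma~\ref{self-duality} invoked. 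You need this two-step mechanism --- build a genuinely $\sigma_d$-symmetric domain from sub-arcs, then compare boundary conditions --- rather than trying to identify the explored region itself with $G(\gamma_1,\gamma_2)$.
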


Before proving this proposition, we show how it implies 
Theorem~\ref{RSW_torus}. The strategy is straightforward and classic: we 
combine crossings together, using only the FKG inequality.

\begin{proof}[Proof of Theorem~\ref{RSW_torus}]
  If $\al<3/2$, Proposition~\ref{box_crossing} implies the claim so we
  can assume $\al>3/2$. Define the following rectangles, see
  Figure~\ref{fig:a}: $$R^h_j = [j n/2,jn/2+3n/2) \times [0,n) \quad
  \text{and} \quad R^v_j=[jn/2,j n/2+n)\times[0,n)$$ for
  $j\in[0,\lfloor2\alpha\rfloor-1]$, where $\lfloor x\rfloor$ denotes
  the integer part of $x$. If every rectangle $R^h_j$ is crossed
  horizontally, and every rectangle $R^v_j$ is crossed vertically, then
  $[0,\alpha n)\times[0,n)$ is crossed horizontally. We denote this
  event by $B$.  The rectangle $R^h_j$ is crossed horizontally with
  probability greater than $c(1)^3/[2(1+q^2)]$
  (Proposition~\ref{box_crossing}), the rectangle $R^v_j$ is crossed
  vertically with probability greater than $c(1)$
  (Lemma~\ref{square_crossings}) and so, using the FKG
  inequality, \begin{equation*} \phi_{p_{sd},q,m}^{\rm
      p}\big(\C_h([0,\al n)\times[0,n))\big)\geq \phi_{p_{sd},q,m}^{\rm
      p}(B)\geq
    \left(\frac{c(1)^4}{2(1+q^2)}\right)^{\lfloor2\alpha\rfloor}.
  \end{equation*}
The claim follows with $c(\alpha) := [c(1)^4/(2+2q^2)]
  ^{\lfloor2\alpha\rfloor}$.
\end{proof}

\begin{figure}[ht!]
  \begin{center}
    \includegraphics[width=0.5\hsize]{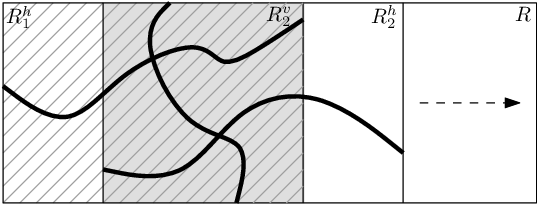}\quad \quad \quad 
    \includegraphics[width=0.3\hsize]{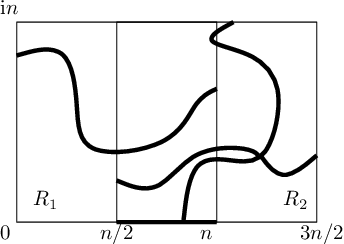}
  \end{center}
  \caption{\textbf{Left:} A combination of crossings in smaller 
  rectangles creating a horizontal crossing of a very long rectangle.  
  \textbf{Right:} The rectangles $R$, $R_1$ and $R_2$ and the event 
  $A$.}
  \label{fig:a}
\end{figure}

\begin{proof}[Proof of Proposition~\ref{box_crossing}]
  The proof goes as follows: we start by creating two paths crossing 
  square boxes, and we then prove that they are connected with good 
  probability.

  \paragraph{Setting of the proof.}

  Consider the rectangle $R=[0,3n/2)\times[0,n)$ which is the union of 
  the rectangles $R_1=[0,n)\times[0,n)$ and $R_2=[n/2,3n/2)\times[0,n)$, 
  see Figure~\ref{fig:a}. Let $A$ be the event defined by the following 
  conditions:
  \begin{itemize}
    \item $R_1$ and $R_2$ are both crossed horizontally (these events 
      have probability at least $c(1)$ to occur, using 
      Lemma~\ref{square_crossings});
    \item $[n/2,n)\times\{0\}$ is connected inside $R_2$ to the top side
      of $R_2$ (which has probability greater than $c(1)/2$ to occur
      using symmetry and Lemma~\ref{square_crossings}).
  \end{itemize}
  Employing the FKG inequality, we deduce that:
  \begin{equation}\label{abc}
    \phi_{p_{sd},q,m}^{\rm p}(A)\geq \frac{c(1)^3}{2}.
  \end{equation}
  When $A$ occurs, define $\Ga_1$ to be the top-most horizontal crossing 
  of $R_1$, and $\Ga_2$ the right-most vertical crossing of $R_2$ from
  $[n/2,n)\times\{0\}$ to the top side. Note that this path is 
  automatically connected to the right-hand side of $R_2$ --- which is 
  the same as the right-most side of $R$. If $\Ga_1$ and $\Ga_2$ are 
  connected, then there exists a horizontal crossing of $R$. In the 
  following, we show that $\Ga_1$ and $\Ga_2$ are connected with good 
  probability.

  \paragraph{Exploration of the paths $\Ga_1$ and $\Ga_2$.}

  There is a standard way of exploring $R$ in order to discover $\Ga_1$ 
  and $\Ga_2$. Start an exploration from the top-left corner of $R$ that 
  leaves open edges on its right, closed edges on its left and remains 
  in $R_1$. If $A$ occurs, this exploration will touch the right-hand 
  side of $R_1$ before its bottom side; stop it the first time it does.  
  Note that the exploration process ``slides'' between open edges of the 
  primal lattice and dual open edges of the dual (formally, this 
  exploration process is defined on the medial lattice, see \emph{e.g.} 
  \cite{BeffaraDuminilSmirnov}). The open edges that are adjacent to the 
  exploration form the top-most horizontal crossing of $R_1$, \ie\  
  $\Ga_1$. At the end of the exploration, the process has \emph{a 
  priori} discovered a set of edges which lies above $\Ga_1$, so that 
  the remaining part of $R_1$ is undiscovered.

  By starting an exploration at point $(n,0)$, leaving open edges on its 
  left and closed edges on its right, we can explore the rectangle 
  $R_2$. If $A$ holds, the exploration ends on the top side of $R_2$.  
  The open edges adjacent to the exploration constitute the path $\Ga_2$ 
  and the set of edges already discovered lies ``to the right'' of 
  $\Gamma_2$.

  \begin{figure}[ht!]
    \begin{center}
      \includegraphics[width=0.8\hsize]{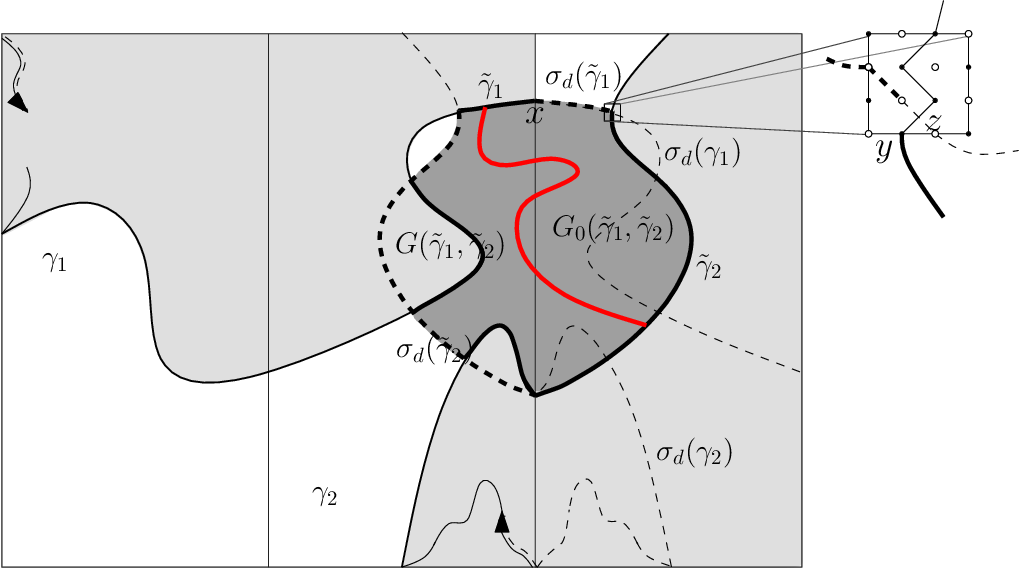}
    \end{center}
    \caption{The light gray area is the part of $R$ that is \emph{a 
    priori} discovered by the exploration processes (note that this area 
    can be much smaller). The dark gray is the domain 
    $G_0(\tilde{\g}_1,\tilde{\g}_2)$. We have depicted all the paths 
    involved in the construction. Note that dashed curves are ``virtual 
    paths''  of the dual lattice obtained by the reflection $\sigma_d$: 
    they are not necessarily dual open.}
    \label{fig:rectangle_R}
  \end{figure}

  \paragraph{The reflection argument.}

  Assume first that we know $\Ga_1=\g_1$ and $\Ga_2=\g_2$ and that they 
  do not intersect. Let $x$ be the end-point of $\g_1$, \ie\ its unique 
  point on the right-hand side of $R_1$. We want to define a set 
  $G_0(\g_1,\g_2)$ similar to those considered in 
  Lemma~\ref{self-duality}. Apply the following ``surgical procedure,'' 
  see Figure~\ref{fig:rectangle_R}:
  \begin{itemize}
    \item First, define the symmetric paths $\sigma_d(\g_1)$ and 
      $\sigma_d(\g_2)$ of $\g_1$ and $\g_2$ with respect to the line 
      $d:=(n-\sqrt{2}/4)+{\rm i}\R$;
    \item Then, parametrize the path $\sigma_d(\g_1)$ by the distance 
      (along the path) to its starting point $\sigma_d(x)$ and define 
      $\tilde{\g}_1\subset \g_1$ so that $\sigma_d(\tilde{\g}_1)$ is the 
      part of $\sigma_d(\g_1)$ between the start of the path and the 
      first time it intersects $\g_2$. As before, the paths are 
      considered as curves of the plane; we denote $z$ the intersection 
      point of the two curves. Note that $\g_1$ and $\g_2$ are not 
      intersecting, which forces $\sigma_d(\g_1)$ and $\g_2$ to be;
    \item From this, parametrize the path $\g_2$ by the distance to its 
      starting point $(n,0)$ and set $y$ to be the last visited site in 
      $\mathbb{L}$ before the intersection $z$. Define $\tilde{\g}_2$ to 
      be the part of $\g_2$ between the last point intersecting $n+{\rm 
      i}\R$ before $y$ and $y$ itself;
    \item Paths $\tilde{\g}_1$ and $\tilde{\g}_2$ satisfy Hypothesis 
      $(\star)$ so that the graph $G(\tilde{\g}_1,\tilde{\g}_2)$ can be 
      defined;
    \item Construct a sub-graph $G_0(\g_1,\g_2)$ of 
      $G(\tilde{\g}_1,\tilde{\g}_2)$ as follows: the edges are given by 
      the edges of $\mathbb{L}$ included in the connected component of 
      $G(\tilde{\g}_1,\tilde{\g}_2)\setminus (\g_1\cup\g_2)$ 
      (\emph{i.e.} $G(\tilde{\g}_1,\tilde{\g}_2)$ minus the set 
      $\g_1\cup\g_2$) containing $d$ (it is the connected component 
      which contains $x-\varepsilon {\rm i}$, where $\varepsilon$ is a 
      very small number), and the sites are given by their endpoints.
  \end{itemize}

  The graph $G_0(\g_1,\g_2)$ has a very useful property: none of its 
  edges has been discovered by the previous exploration paths.  Indeed, 
  $\sigma_d(\g_1)$ and $\sigma_d(x)$ are included in the unexplored 
  connected component of $R\setminus R_1$, and so does 
  $G_0(\g_1,\g_2)\cap (R\setminus R_1)$. Edges of $G_0(\g_1,\g_2)$ in 
  $R_1$ are in the same connected component of $R\setminus (\g_1\cup 
  \g_2)$ as $x-\epsilon {\rm i}$, and thus lie `below' $\g_1$.

  \paragraph{Conditional probability estimate.}

  Still assuming that $\g_1$ and $\g_2$ do not intersect, we would like 
  to estimate the probability of $\g_1$ and $\g_2$ being connected by a 
  path knowing that $\Ga_1=\g_1$ and $\Ga_2=\g_2$.  Following the 
  exploration procedure described above, we can discover $\g_1$ and 
  $\g_2$ without touching any edge in the interior of $G_0(\g_1,\g_2)$.  
  Therefore, the process in the domain is a random-cluster model with 
  specific boundary condition. 

  The boundary of $G_0(\g_1,\g_2)$ can be split into several sub-arcs of 
  various types (see Figure~\ref{fig:rectangle_R}): some are sub-arcs of 
  $\g_1$ or $\g_2$, while the others are (adjacent to) sub-arcs of their 
  symmetric images $\sigma_d(\g_1)$ and $\sigma_d(\g_2)$. The 
  conditioning on $\Ga_1=\g_1$ and $\Ga_2=\g_2$ ensures that the edges 
  along the sub-arcs of the first type are open; the connections along 
  the others depend on the exact explored configuration in a much more 
  intricate way, but in any case the boundary condition imposed on the 
  configuration inside $G(\tilde\g_1, \tilde\g_2)$ is larger than the 
  mixed boundary condition. Notice that \emph{any} boundary condition 
  dominates the free one and that $\tilde{\g}_1$ and $\tilde{\g}_2$ are 
  two sub-arcs of the first type (they are then wired). We deduce that 
  the measure restricted to $G_0(\tilde\g_1,\tilde\g_2)$ stochastically 
  dominates the restriction of $\phi_{\tilde\g_1,\tilde\g_2}$ to 
  $G_0(\tilde\g_1,\tilde\g_2)$.

  From these observations, we deduce that for any increasing event $B$ 
  \emph{depending only on edges in $G_0(\g_1,\g_2)$},
  \begin{equation}
    \phi^{\rm p}_{p_{sd},q,m}(B|\Ga_1=\g_1,\Ga_2=\g_2) \geq 
    \phi_{\tilde{\g}_1,\tilde{\g}_2}(B).
  \end{equation}
  In particular, we can apply this inequality to $\{\g_1\leftrightarrow 
  \g_2\text{ in }G_0(\g_1,\g_2)\}$. Note that if $\tilde{\g}_1$ and 
  $\tilde{\g}_2$ are connected in $G(\tilde{\g}_1,\tilde{\g}_2)$, $\g_1$ 
  and $\g_2$ are connected in $G_0(\tilde{\g_1},\tilde{\g}_2)$. The 
  first event is of $\phi_{\tilde{\g}_1,\tilde{\g}_2}$-probability 
  at least $1/(1+q^2)$, implying
  \begin{align}
    \phi^{\rm p}_{p_{sd},q,m}
    (\g_1\leftrightarrow\g_2|\Ga_1=\g_1,\Ga_2=\g_2) &\geq
    \phi_{\tilde{\g}_1,\tilde{\g}_2}
    (\g_1\leftrightarrow\g_2\text{~in~}G_0(\g_1,\g_2)) \nonumber \\
    & \geq \phi_{\tilde{\g}_1,\tilde{\g}_2}(\tilde{\g}_1\leftrightarrow
    \tilde{\g}_2)\geq\frac1{1+q^2}.\label{estimate_duality}
  \end{align}

  \paragraph{Conclusion of the proof.}

  Note the following obvious fact: if $\g_1$ and $\g_2$ intersect, the 
  conditional probability that $\Ga_1$ and $\Ga_2$ intersect, knowing 
  $\Ga_1=\g_1$ and $\Ga_2=\g_2$ is equal to $1$ --- in particular, it is 
  greater than $1/(1+q^2)$. Now,
  \begin{align*}
    \phi^{\rm p}_{p_{sd},q,m}(\C_h(R))&\geq \phi^{\rm
      p}_{p_{sd},q,m}(\C_h(R)\cap A)\\
    &\geq\phi^{\rm p}_{p_{sd},q,m}(\{\Ga_1\leftrightarrow\Ga_2\}\cap
    A)\\
    &=\phi^{\rm p}_{p_{sd},q,m}\big( \phi^{\rm
      p}_{p_{sd},q,m}(\Ga_1\leftrightarrow\Ga_2|\Ga_1,\Ga_2)\mathbbm{1}_{A}\big)\\
    &\geq \frac{1}{1+q^2}\phi(A)\geq \frac{c(1)^3}{2(1+q)^2}
  \end{align*}
  where the first two inequalities are due to inclusion of events, the 
  third one to the definition of conditional expectation, and the fourth 
  and fifth ones, to \eqref{estimate_duality} and \eqref{abc}.
\end{proof}

An equivalent of Theorem~\ref{RSW_torus} holds in the case of the 
infinite-volume random-cluster measure with wired boundary condition.

\begin{corollary}\label{RSW_bulk}
  Let $\al>1$ and $q\geq 1$; there exists $c(\al)>0$ such that for every 
  $n\geq1$,
  \begin{equation}
    \phi_{p_{sd},q}^1\big[\C_h\big([0,\al n)\times[0,n)\big)\big]\geq 
    c(\al).
  \end{equation}
\end{corollary}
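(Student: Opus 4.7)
My plan is to deduce the corollary from Theorem~\ref{RSW_torus} by combining a direct stochastic comparison between periodic and wired boundary conditions with a passage to the infinite-volume limit.

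Fix $M > 2\alpha n$ and regard the torus $\mathbb{T}_M$ as the box $B_M := [-M/2, M/2)^2$ together with the extra wrap-around edges joining opposite sides. Split a torus configuration as $\omega = \omega_b \cup \omega_p$ according to whether each edge lies in $B_M$ or is periodic. A short inspection of cluster counts shows that $k_{\mathbb{T}}(\omega_b \cup \omega_p) = k_{B_M}^{\xi(\omega_p)}(\omega_b)$, where $\xi(\omega_p)$ denotes the partition of $\partial B_M$ induced by $\omega_p$-connectivity. Consequently, the conditional distribution of $\omega_b$ given $\omega_p$ under $\phi^{\rm p}_{p_{sd},q,M}$ is exactly the box random-cluster measure $\phi^{\xi(\omega_p)}_{p_{sd},q,B_M}$.

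Any partition $\xi(\omega_p)$ is coarsened by the fully wired one, so the comparison between boundary conditions~\eqref{comparison_between_boundary_conditions} gives $\phi^{\xi(\omega_p)}_{p_{sd},q,B_M}(\C_h(R)) \leq \phi^1_{p_{sd},q,B_M}(\C_h(R))$ for every realization of $\omega_p$ and every translate $R$ of $[0,\alpha n)\times[0,n)$ contained in $B_M$. Averaging over $\omega_p$ and invoking Theorem~\ref{RSW_torus}, I obtain
$$c(\alpha) \leq \phi^{\rm p}_{p_{sd},q,M}(\C_h(R)) \leq \phi^1_{p_{sd},q,B_M}(\C_h(R)).$$

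Letting $M \to \infty$ along even integers, the boxes $B_M$ exhaust $\mathbb{L}$ and the wired finite-volume measures $\phi^1_{p_{sd},q,B_M}$ converge weakly (in fact, decreasingly in the stochastic order) to the infinite-volume wired measure $\phi^1_{p_{sd},q}$. Since $\C_h(R)$ depends on only finitely many edges, the inequality passes to the limit, giving $\phi^1_{p_{sd},q}(\C_h(R)) \geq c(\alpha)$; translation invariance of $\phi^1_{p_{sd},q}$ then places $R$ as in the statement.

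The only genuinely delicate point is the verification of the conditional identity between the torus and box cluster counts in the first step --- a short bookkeeping check with the definition of $\xi(\omega_p)$. Everything else is a routine application of stochastic monotonicity and weak convergence for local events, so I do not foresee a substantive obstacle; the write-up should fit in a short paragraph.
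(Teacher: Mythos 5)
Your proposal is correct and takes essentially the same route as the paper: dominate the torus measure by the wired box measure via comparison between boundary conditions, then let the box grow to infinity and use weak convergence to the infinite-volume wired measure. You merely unpack the paper's one-line appeal to ``comparison between boundary conditions'' by spelling out the underlying mechanism (conditioning on the wrap-around edges and invoking the domain Markov property), which is exactly what justifies that inequality.
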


\begin{proof}
  Let $\alpha>1$ and $m>2\alpha n>0$. Using the invariance under 
  translations of $\phi^{\rm p}_{p_{sd},q,m}$ and comparison between 
  boundary conditions, we have $$\phi^{1}_{p_{sd},q,[-\frac m2,\frac 
  m2)^2}\big[\C_h\big([0,\al n)\times[0,n)\big)\big]\geq \phi^{\rm 
  p}_{p_{sd},q,m}\big[\C_h\big([0,\al n)\times[0,n)\big)\big]\geq 
  c(\alpha).$$ When $m$ goes to infinity, the left hand side converges 
  to the probability in infinite volume, so that 
  \begin{equation*}
    \phi^{1}_{p_{sd},q}\big[\C_h\big([0,\al n)\times[0,n)\big)\big]\geq 
    c(\alpha).\qedhere
  \end{equation*}
\end{proof}

\begin{remark}
  The only place where we use the periodic and the (bulk) wired boundary 
  conditions is in the estimate of Lemma~\ref{square_crossings}. For 
  instance, if one could prove that the probability for a square box to 
  be crossed from top to bottom with free boundary conditions stays 
  bounded away from $0$ when $n$ goes to infinity, then an equivalent of 
  Theorem~\ref{RSW_torus} would follow with free boundary conditions.

  Uniform estimates with respect to boundary conditions should be true 
  for $q\in[1,4)$; we expect the random-cluster model to be conformally 
  invariant in the scaling limit. It should be false for $q\geq4$.  
  Indeed, for $q>4$, the phase transition is (conjecturally) of first 
  order in the sense that there should not be uniqueness of the 
  infinite-volume measure. At $q=4$, the random-cluster model should be 
  conformally invariant, but the probability of a crossing with free 
  boundary condition should converge to $0$. Nevertheless, the 
  probability that there is an open circuit surrounding the box of size 
  $n$ in the box of size $2n$ with free boundary condition should stay 
  bounded away from $0$.

  Proving an equivalent of Theorem~\ref{RSW_torus} with uniform 
  estimates with respect to boundary conditions is an important 
  question, since it would allow us to study the critical phase. The special 
  case $q=2$ has been derived recently in \cite{DuminilHonglerNolin}.
\end{remark}

\section{A sharp threshold theorem for crossing probabilities}
\label{sec:sharp_threshold}

The aim of this section is to understand the behavior of the function 
$p\mapsto \phi_{p,q,n}^\xi(A)$ for a non-trivial increasing event $A$. 
This increasing function is equal to $0$ at $p=0$ and to $1$ at $p=1$, 
and we are interested in the range of $p$ for which its value is between 
$\ep$ and $1-\ep$ for some positive $\ep$ (this range is usually 
referred to as a \emph{window}). Under mild conditions on $A$, the 
window will be narrow for large graphs, and its width can be bounded 
above in terms of the size of the underlying graph, which is known as a 
\emph{sharp threshold} behavior.

Historically, the general theory of sharp thresholds was first developed 
by Kahn, Kalai and Linial~\cite{KahnKalaiLinial} (see also 
\cite{Friedgut,FriedgutKalai,KalaiSafra}) in the case of product 
measures. In lattice models such as percolation, these results are used 
via a differential equality known as Russo's formula, 
see~\cite{GrimmettPerco,Russo}. Both sharp threshold theory and Russo's 
formula were later extended to random-cluster measures with $q\geq1$, 
see references below. These arguments being not totally standard, we 
remind the readers of the classic results we will employ and refer them 
to \cite{Grimmett} for general results. Except for 
Theorem~\ref{maximum_influence}, the proofs are quite short so that it 
is natural to include them. The proofs are directly extracted from the 
Grimmett's monograph \cite{Grimmett}.

\bigskip

In the whole section, $G$ will denote a finite graph; if $e$ is an edge 
of $G$, let $J_e$ be the random variable equal to $1$ if the edge $e$ is 
open, and $0$ otherwise. We start with an example of a differential 
inequality, which will be useful in the proof of 
Theorem~\ref{exponential_decay}.

\begin{proposition}[see \cite{Grimmett,GrimmettPiza}]
  \label{russo_hamming_proposition}
  Let $q\geq 1$; for any random-cluster measure $\phi_{p,q,G}^\xi$ with 
  $p\in(0,1)$ and any increasing event $A$, $$\frac{d}{dp} 
  \phi_{p,q,G}^\xi(A)\geq 4\phi_{p,q,G}^\xi(A)\phi_{p,q,G}^\xi(H_A),$$ 
  where $H_A(\omega)$ is the Hamming distance between $\omega$ and $A$.
\end{proposition}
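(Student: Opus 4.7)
The plan is to combine a Margulis--Russo-type differential identity for random-cluster measures with a monotone coupling furnished by the FKG inequality.

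First, I would derive the covariance identity
$$\frac{d}{dp}\phi_{p,q,G}^\xi(A)=\frac{1}{p(1-p)}\,\mathrm{Cov}_{p,q,G}^\xi(\mathbbm{1}_A,N),$$
where $N(\omega)=o(\omega)$ is the number of open edges. This is a routine exponential-family computation: differentiating the Boltzmann weights $p^{o(\omega)}(1-p)^{c(\omega)}q^{k(\omega,\xi)}$ and using that $o(\omega)+c(\omega)=|E|$ is constant, one gets $\frac{d}{dp}\log\phi_{p,q,G}^\xi(\omega)=(o(\omega)-|E|p)/(p(1-p))$; the identity then follows by taking expectations against $\mathbbm{1}_A$ and subtracting the same computation performed on the normalization.

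Second, I would rewrite the covariance as $\phi(A)\bigl(\mathbb{E}[N\mid A]-\mathbb{E}[N]\bigr)$ and lower bound the difference by $\phi(H_A)$. Since $A$ is increasing and $q\geq 1$, the FKG inequality recalled in Section~\ref{sec:basic} implies that $\phi(\cdot\mid A)$ stochastically dominates $\phi$, so Strassen's theorem provides a monotone coupling $(\omega,\omega')$ with $\omega\sim\phi$, $\omega'\sim\phi(\cdot\mid A)$, and $\omega\leq\omega'$ almost surely. Because $A$ is upward-closed, the Hamming distance $H_A(\omega)$ coincides with the minimum number of closed edges of $\omega$ that must be opened to land in $A$ (any Hamming-minimizer can be assumed to lie above $\omega$). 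Hence in the coupling the configuration $\omega'\in A$ is obtained from $\omega$ by opening exactly $N(\omega')-N(\omega)$ edges, and therefore $N(\omega')-N(\omega)\geq H_A(\omega)$ pointwise; taking expectations gives $\mathbb{E}[N\mid A]-\mathbb{E}[N]\geq\phi(H_A)$.

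Plugging this bound into the derivative identity and using $p(1-p)\leq 1/4$ for $p\in(0,1)$ yields
$$\frac{d}{dp}\phi_{p,q,G}^\xi(A)\geq\frac{\phi(A)\phi(H_A)}{p(1-p)}\geq 4\,\phi(A)\phi(H_A),$$
as desired. I expect no real obstacle: the differential identity is mechanical, and the quantitative piece reduces to producing one monotone coupling, which is standard for $q\geq 1$. The only point requiring mild care is verifying that the Hamming distance to an increasing event is realized by an upward move in the lattice of configurations, but this is immediate from $A$ being upward-closed, since opening edges of any nearest $\omega^\star\in A$ that are closed in $\omega$ cannot take us out of $A$ and only decreases the number of disagreements.
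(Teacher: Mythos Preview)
Your argument is correct. Both your proof and the paper's start from the same covariance identity $\frac{d}{dp}\phi(A)=\frac{1}{p(1-p)}\mathrm{Cov}(\mathbbm{1}_A,N)$ and finish with $p(1-p)\leq 1/4$; the difference lies in how the covariance is bounded below by $\phi(A)\phi(H_A)$. The paper does not invoke Strassen's theorem: instead it observes that the function $N+H_A$ is increasing (opening one edge raises $N$ by $1$ and lowers $H_A$ by at most $1$), applies the FKG inequality directly to the pair $(\mathbbm{1}_A,\,N+H_A)$, and then uses that $H_A\mathbbm{1}_A\equiv 0$. Your route---FKG to get $\phi(\cdot\mid A)\succeq\phi$, Strassen to realize a monotone coupling, and the pointwise inequality $N(\omega')-N(\omega)\geq H_A(\omega)$---is a genuine alternative: it trades the neat ``$N+H_A$ is increasing'' observation for an extra (though standard) coupling tool, and has the mild advantage that the intuition behind the bound is perhaps more transparent.
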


\begin{proof}
  Let $A$ be an increasing event. The key step is the following 
  inequality, see \cite{BezuidenhoutGrimmettKesten,Grimmett}, which can 
  be obtained by differentiating with respect to $p$ (for details of the 
  computation, see Theorem (2.46) of \cite{Grimmett}):
  \begin{equation}\label{russo}
    \frac{d}{dp}\phi_{p,q,G}^\xi(A)=\frac{1}{p(1-p)}\sum_{e\in E} \left[ 
    \phi_{p,q,G}^\xi (\mathbbm{1}_AJ_e) - \phi_{p,q,G}^\xi (J_e) 
    \phi_{p,q,G}^\xi(A) \right].
  \end{equation}
  A similar differential formula is actually true for any random 
  variable $X$, but we will not use this fact in the proof. Define 
  $|\eta|$ to be the number of open edges in the configuration, it is 
  simply the sum of the random variables $J_e$, $e\in E$. With this 
  notation, one can rewrite \eqref{russo} as
  \begin{align*}
    \frac{d}{dp} \phi_{p,q,G}^\xi(A) &= \frac{1}{p(1-p)} \left[ 
    \phi_{p,q,G}^\xi (|\eta|\mathbbm{1}_A) - \phi_{p,q,G}^\xi (|\eta|) 
    \phi_{p,q,G}^\xi(A) \right] \\
    &= \frac{1}{p(1-p)} \left[ \phi_{p,q,G}^\xi \big((|\eta|+H_A) 
    \mathbbm{1}_A\big) - \phi_{p,q,G}^\xi \big(|\eta|+H_A\big) 
    \phi_{p,q,G}^\xi(A) \right. \\
    &\hspace{6em} \left. {} - \phi_{p,q,G}^\xi(H_A\mathbbm{1}_A) + 
    \phi_{p,q,G}^\xi (H_A) \phi_{p,q,G}^\xi(A) \right]\\
    &\geq \frac{1}{p(1-p)}\phi_{p,q,G}^\xi(H_A)\phi_{p,q,G}^\xi(A).
  \end{align*}
  To obtain the second line, we simply add and subtract the same 
  quantity. In order to go from the second line to the third, we remark 
  two things: in the second line, the third term equals $0$ (when $A$ 
  occurs, the Hamming distance to $A$ is $0$), and the sum of the first 
  two terms is positive thanks to the FKG inequality (indeed, it is easy 
  to check that $|\eta|+H_A$ is increasing). The claim follows since 
  $p(1-p)\leq 1/4$.
\end{proof}

This proposition has an interesting reformulation: integrating the 
formula between $p_1$ and $p_2>p_1$, we obtain
\begin{equation}\label{russo_hamming}
  \phi_{p_1,q,G}^\xi (A) \leq \phi_{p_2,q,G}^\xi (A) \; {\rm 
  e}^{-4(p_2-p_1)\phi_{p_2,q,G}^\xi(H_A)}
\end{equation}
(note that $H_A$ is a decreasing random variable).  If one can prove 
that the typical value of $H_A$ is sufficiently large, for instance 
because $A$ occurs with small probability, then one can obtain bounds 
for the probability of $A$. This kind of differential formula is very 
useful in order to prove the existence of a sharp threshold.  The next 
example presents a sharper estimate of the derivative.

Intuitively, the derivative of $\phi_{p,q,G}^\xi(A)$ with respect to $p$ 
is governed by the influence of one single edge, switching from closed 
to open (roughly speaking, considering the increasing coupling between 
$p$ and $p+{\rm d}p$, it is unlikely that two edges switch their state).  
The following definition is therefore natural in this setting. The 
\emph{(conditional) influence} on $A$ of the edge $e\in E$, denoted by 
$I_A(e)$, is defined as \[ I_A(e) := \phi_{p,q,G}^\xi (A|J_e=1) - 
\phi_{p,q,G}^\xi (A|J_e=0). \]

\begin{proposition}\label{russo_influence}
  Let $q\geq 1$ and $\ep>0$; there exists $c=c(q,\ep)>0$ such that for any 
  random-cluster measure $\phi_{p,q,G}^\xi$ with $p\in[\ep,1-\ep]$ and 
  any increasing event $A$, \begin{equation*}
    \frac{d}{dp}\phi_{p,q,G}^\xi(A)\geq c\sum_{e\in E}I_A(e).
  \end{equation*}
\end{proposition}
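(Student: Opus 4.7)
The plan is to refine Russo's formula \eqref{russo} by grouping each summand according to the influence of the corresponding edge. The strategy has two steps: first, an algebraic rewriting that turns the covariance $\phi_{p,q,G}^\xi(\mathbbm{1}_A J_e) - \phi_{p,q,G}^\xi(J_e)\phi_{p,q,G}^\xi(A)$ into a product involving $I_A(e)$; second, a use of the finite-energy property to bound the resulting prefactor from below by a constant multiple of $p(1-p)$.

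For the first step, expanding $\phi_{p,q,G}^\xi(A)$ via the law of total probability conditioned on $\{J_e = 0\}$ and $\{J_e = 1\}$, and using that $\phi_{p,q,G}^\xi(\mathbbm{1}_A J_e) = \phi_{p,q,G}^\xi(A\mid J_e=1)\,\phi_{p,q,G}^\xi(J_e=1)$, a one-line manipulation yields
$$\phi_{p,q,G}^\xi(\mathbbm{1}_A J_e) - \phi_{p,q,G}^\xi(J_e)\,\phi_{p,q,G}^\xi(A) = \phi_{p,q,G}^\xi(J_e=1)\,\phi_{p,q,G}^\xi(J_e=0)\,I_A(e).$$
Substituting this into \eqref{russo} recasts the derivative as a weighted sum of influences, with weights $\phi_{p,q,G}^\xi(J_e=1)\phi_{p,q,G}^\xi(J_e=0)/[p(1-p)]$.

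For the second step, conditioning on the states of all edges other than $e$, the Markov property of the random-cluster measure shows that $\phi_{p,q,G}^\xi(J_e=1\mid\text{rest})$ takes only the two values $p$ or $p/[p+q(1-p)]$, according to whether the endpoints of $e$ are already connected off $e$. For $q\geq 1$ the second value is the smaller one, and the inequality $p+q(1-p)\leq q$ then gives $\phi_{p,q,G}^\xi(J_e=1)\geq p/q$ and $\phi_{p,q,G}^\xi(J_e=0)\geq 1-p$. Consequently each weight is at least $1/q$, and the proposition follows with $c = 1/q$; the restriction $p\in[\epsilon,1-\epsilon]$ is used only to guarantee that the conditional probabilities defining $I_A(e)$ are well-posed. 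There is no real obstacle here: once the covariance is rewritten as an influence, the result reduces to a textbook application of finite energy, and the factor $p(1-p)$ in Russo's formula cancels exactly against the lower bound produced by finite energy.
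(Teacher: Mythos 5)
Your proof is correct and takes essentially the same route as the paper's: rewrite the covariance in Russo's formula~\eqref{russo} as $\phi_{p,q,G}^\xi(J_e)\big(1-\phi_{p,q,G}^\xi(J_e)\big)I_A(e)$ and then bound the resulting weight from below using finite energy. The paper states that weight bound without computation; you supply it explicitly with $c=1/q$, which in fact holds uniformly in $p\in(0,1)$ (the conditional probabilities defining $I_A(e)$ are already well-posed for every $p\in(0,1)$, so the $\ep$-restriction is not actually needed for your argument).
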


\begin{proof}
  Note that, by definition of $I_A(e)$, \[ \phi_{p,q,G}^\xi 
  (\mathbbm{1}_A J_e) - \phi_{p,q,G}^\xi (A) \phi_{p,q,G}^\xi (J_e) = 
  I_A(e) \phi_{p,q,G}^\xi (J_e) \big( 1 - \phi_{p,q,G}^\xi (J_e) \big) 
  \] so that \eqref{russo} becomes
  \begin{align*}
    \frac{d}{dp} \phi_{p,q,G}^\xi (A) &= \frac{1}{p(1-p)} \sum_{e\in E} 
    \phi_{p,q,G}^\xi (J_e) \big( 1 - \phi_{p,q,G}^\xi(J_e) \big) 
    I_A(e)\\
    &=\sum_{e\in E} \frac 
    {\phi_{p,q,G}^\xi(J_e)\big(1-\phi_{p,q,G}^\xi(J_e)\big)} {p(1-p)} 
    I_A(e)
  \end{align*}
  from which the claim follows since the term \[ \frac 
  {\phi_{p,q,G}^\xi(J_e)\big(1-\phi_{p,q,G}^\xi(J_e)\big)} {p(1-p)} \]
  is bounded away from $0$ uniformly in $p\in[\ep,1-\ep]$ and $e\in E$ 
  when $q$ is fixed.
\end{proof}

\bigskip

There has been an extensive study of the largest influence in the case 
of product measures. It was initiated in \cite{KahnKalaiLinial} and 
recently lead to important consequences in statistical models, see 
\emph{e.g.}~\cite{BollobasRiordan, BollobasRiordanperco}. The following 
theorem is a special case of the generalization to positively-correlated 
measures.

\begin{theorem}[see \cite{GrahamGrimmett}]\label{maximum_influence}
  Let $q\geq 1$ and $\ep>0$; there exists a constant $c = c(q,\ep) \in 
  (0,\infty)$ such that the following holds. Consider a random-cluster 
  model on a graph $G$ with $|E|$ denoting the number of edges of $G$.  
  For every $p\in[\ep,1-\ep]$ and every
  increasing event $A$, there exists $e\in E$ such that \[ I_A(e)\geq c 
  \,\phi_{p,q,G}^\xi(A)\big(1-\phi_{p,q,G}^\xi(A)\big)\frac{\log 
  |E|}{|E|}.  \]
\end{theorem}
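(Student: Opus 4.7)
The plan is to reduce the theorem to the classical Kahn--Kalai--Linial (KKL) maximum-influence bound for product measures. The main obstacle is that $\phi_{p,q,G}^\xi$ is not a product measure; this will be circumvented by realizing it as a monotone function of independent variables, and then transferring influences back using finite energy and positive association.

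First, using the FKG property (which holds because $q\geq 1$), I would construct an increasing coupling $F\colon[0,1]^E\to\{0,1\}^E$ such that if $(U_e)_{e\in E}$ are i.i.d.\ uniform on $[0,1]$, then $F(U)$ has law $\phi_{p,q,G}^\xi$ and each coordinate $F_e$ is monotone in $U$. One way is to fix an ordering of the edges and successively sample the state of each edge using $U_e$ and the conditional random-cluster measure given the already-revealed edges together with $\xi$; positive association ensures that the conditional probability of $J_e=1$ is monotone in the revealed configuration, which in turn makes $F$ coordinatewise monotone in $U$.

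Second, I would apply the Bourgain--Kahn--Kalai--Katznelson--Linial form of the KKL theorem (valid for monotone events under the uniform product measure on $[0,1]^E$) to the pulled-back event $\tilde A:=F^{-1}(A)$. This produces an edge $e_\ast$ whose product-space influence is at least $c\,\mu(\tilde A)(1-\mu(\tilde A))\log|E|/|E|$, and by construction $\mu(\tilde A)=\phi_{p,q,G}^\xi(A)$.

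Finally, I would compare the product-space influence of $U_{e_\ast}$ on $\tilde A$ with the random-cluster influence $I_A(e_\ast)$. The finite energy property, uniform in $p\in[\ep,1-\ep]$ and in the state of the other edges, says that the conditional law of $J_{e_\ast}$ given $(J_e)_{e\ne e_\ast}$ charges both states with mass at least some $c(\ep,q)>0$. A resampling argument combined with this uniform lower bound shows that the product-space influence is at most a constant (depending only on $q$ and $\ep$) times $I_A(e_\ast)$. Combining with the previous step yields the claimed lower bound on $\max_e I_A(e)$. The main difficulty lies in this last step: the sequential coupling $F$ depends on the chosen ordering of edges while $I_A(e)$ is order-free, so reconciling the product-space influence with the symmetric conditional influence requires careful use of the FKG inequality together with finite energy, uniformly in the boundary condition $\xi$.
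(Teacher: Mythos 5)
The paper itself does not prove this theorem: it refers to \cite{GrahamGrimmett} and explicitly omits the argument (``Except for Theorem~\ref{maximum_influence}, the proofs are quite short so that it is natural to include them''). So there is no in-paper proof to compare against, and your proposal should be judged against the Graham--Grimmett strategy, which it correctly identifies in outline: build a coordinatewise monotone encoding $F\colon[0,1]^E\to\{0,1\}^E$ by sequential conditional sampling (positive association gives monotonicity), apply the Bourgain--Kahn--Kalai--Katznelson--Linial maximum-influence bound to $\tilde A=F^{-1}(A)$ on $[0,1]^E$, and transfer the resulting estimate back to the conditional influence $I_A$. Steps one and two are fine.

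The transfer step is where the whole proof lives, and your sketch of it is too vague and, as to mechanism, not convincing. You lean on finite energy together with a ``resampling argument,'' but the real obstacle is that in the sequential encoding, perturbing $U_e$ can cascade and flip every edge sampled \emph{after} $e$ in the chosen ordering, so product-space pivotality of the coordinate $e$ is not a local phenomenon that a finite-energy comparison of the two fillings of the single edge $e$ can control. What actually makes the comparison work is a global stochastic-domination argument built out of FKG alone: if $e$ sits in position $i$, then the law of $F(U)$ conditioned on $U_e=1$ is stochastically dominated by $\phi^\xi_{p,q,G}(\cdot\mid J_e=1)$, because the first $i-1$ coordinates under the latter carry the marginal conditioned on the increasing event $\{J_e=1\}$ (which is stochastically larger by positive association) while under the former they carry the unconditioned marginal, and both measures then complete the configuration by the same monotone conditional-sampling rule; symmetrically, the law of $F(U)$ given $U_e=0$ stochastically dominates $\phi^\xi_{p,q,G}(\cdot\mid J_e=0)$. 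Since $A$ is increasing this yields $I^{\mathrm{prod}}_{\tilde A}(e)\leq I_A(e)$ for every $e$ and every ordering, and the BKKKL bound transfers directly. Until you supply this domination argument (or an equivalent), the transfer step is a genuine gap; invoking finite energy here suggests a local comparison that does not by itself tame the cascading dependence of $F$ on $U_e$.
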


There is a particularly efficient way of using 
Proposition~\ref{russo_influence} together with 
Theorem~\ref{maximum_influence}. In the case of a translation-invariant 
event on a torus of size $n$, horizontal (resp.\ vertical) edges play a 
symmetric role, so that the influence is the same for all of them. In 
particular, Proposition~\ref{russo_influence} together with 
Theorem~\ref{maximum_influence} provide us with the following 
differential inequality:
\begin{theorem}\label{symmetric_sharp_threshold}
  Let $q\geq1$ and $\ep>0$. There exists a constant $c=c(q,\ep) 
  \in(0,\infty)$ such that the following holds. Let $n\geq 1$ and let 
  $A$ be a translation-invariant event on the torus of size $n$: for any 
  $p\in[\ep,1-\ep]$, \[ \frac{d}{dp}\phi_{p,q,n}^{\rm p}(A)\geq c 
  \big(\phi_{p,q,n}^{\rm p}(A)(1-\phi_{p,q,n}^{\rm p}(A)\big)\log n.  \]
\end{theorem}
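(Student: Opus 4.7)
The plan is to combine Proposition~\ref{russo_influence} (which lower-bounds the derivative by the sum of influences) with Theorem~\ref{maximum_influence} (which guarantees one edge of logarithmically large influence), exploiting translation invariance to upgrade a bound on the \emph{maximum} influence into a bound on the \emph{total} influence.

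First, I would observe that the torus of size $n$ is vertex-transitive under the group of lattice translations, which acts transitively on horizontal edges and transitively on vertical edges. Since $A$ is translation-invariant, for any translation $\tau$ we have $I_A(\tau e) = I_A(e)$: indeed the conditional probabilities defining $I_A(e)$ depend only on the pair $(A,e)$ up to an automorphism of the weighted graph, and the periodic measure $\phi_{p,q,n}^{\rm p}$ is itself translation-invariant. Consequently, all horizontal edges share a common influence $I_h$, and all vertical edges share a common influence $I_v$.

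Next, I would apply Theorem~\ref{maximum_influence} to produce an edge $e^\star \in E$ with
\[
I_A(e^\star) \geq c_1\,\phi_{p,q,n}^{\rm p}(A)\bigl(1-\phi_{p,q,n}^{\rm p}(A)\bigr)\frac{\log|E|}{|E|},
\]
where $c_1 = c_1(q,\ep) > 0$ and $|E|$ is the number of edges of the torus, comparable to $n^2$. By the symmetry observation above, this bound holds \emph{uniformly} over all edges of the same orientation as $e^\star$; there are at least $|E|/2$ such edges, so
\[
\sum_{e \in E} I_A(e) \;\geq\; \frac{|E|}{2}\,I_A(e^\star) \;\geq\; \frac{c_1}{2}\,\phi_{p,q,n}^{\rm p}(A)\bigl(1-\phi_{p,q,n}^{\rm p}(A)\bigr)\log|E|.
\]
Since $|E|$ grows polynomially in $n$, $\log|E| \geq c_2 \log n$ for some $c_2 > 0$ (for $n$ at least some absolute constant; small $n$ can be absorbed into the final constant).

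Finally, I would plug this total-influence bound into Proposition~\ref{russo_influence}, obtaining
\[
\frac{d}{dp}\phi_{p,q,n}^{\rm p}(A) \;\geq\; c_3\sum_{e \in E} I_A(e) \;\geq\; c\,\phi_{p,q,n}^{\rm p}(A)\bigl(1-\phi_{p,q,n}^{\rm p}(A)\bigr)\log n
\]
uniformly for $p \in [\ep,1-\ep]$, with $c = c(q,\ep) > 0$, as required. There is no real obstacle once one has accepted Theorem~\ref{maximum_influence} as a black box; the only point requiring care is the translation-invariance argument that all edges of a given orientation share the same influence, which is what turns the single-edge lower bound of Kahn--Kalai--Linial type into a bound on the full sum.
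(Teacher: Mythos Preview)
Your proposal is correct and follows exactly the approach sketched in the paper: the paper does not write out a formal proof of this theorem, but simply remarks that in the translation-invariant setting horizontal (resp.\ vertical) edges play symmetric roles, so their influences coincide, and then Proposition~\ref{russo_influence} combined with Theorem~\ref{maximum_influence} yields the differential inequality. Your argument fills in precisely these details.
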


In particular, for a non-empty increasing event $A$, we can integrate
the previous inequality between two parameters $p_1<p_2$  (we recognize 
the derivative of $\log (x/(1-x))$) to obtain \[ 
\frac{1-\phi_{p_1,q,n}^{\rm p}(A)}{\phi_{p_1,q,n}^{\rm p}(A)}\geq 
\frac{1-\phi_{p_2,q,n}^{\rm p}(A)}{\phi_{p_2,q,n}^{\rm 
p}(A)}n^{c(p_2-p_1)}.\] If we further assume that 
$\phi_{p_1,q,n}^\xi(A)$ stays bounded away from $0$ uniformly in $n\geq 
1$, we can find $c'>0$ such that
\begin{equation}\label{important}
  \phi_{p_2,q,n}^{\rm p}(A)\geq 1-c'n^{-c(p_2-p_1)}.
\end{equation}
This inequality will be instrumental in the next section.

\section{The proofs of Theorems~\ref{critical_value} 
and~\ref{exponential_decay}}
\label{sec:proofs}

The previous two sections combine in order to provide estimates on 
crossing probabilities (see \cite{BollobasRiordan, BollobasRiordanperco} 
for applications in the case of percolation). Indeed, one can consider 
the event that \emph{some} long rectangle is crossed in a torus. At 
$p=p_{sd}$, we know that the probability of this event is bounded away 
from $0$ uniformly in the size of the torus (thanks to 
Theorem~\ref{RSW_torus}).  Therefore, we can apply 
Theorem~\ref{symmetric_sharp_threshold} to conclude that the probability 
goes to $1$ when $p>p_{sd}$ (we also have an explicit estimate on the 
probability). It is then an easy step to deduce a lower bound for the 
probability of crossing a \emph{particular} rectangle.

Theorem~\ref{critical_value} is proved by constructing a path from $0$ 
to infinity when $p>p_{sd}$, which is usually done by combining 
crossings of rectangles. There is a major difficulty in doing such a 
construction: one needs to transform estimates in the torus into 
estimates in the whole plane. One solution is to replace the periodic
boundary condition by wired boundary condition. The path construction is 
a little tricky since it must propagate wired boundary conditions 
through the construction (see Proposition~\ref{path_circuits}); it does 
not follow the standard lines.

Theorem~\ref{exponential_decay} follows from a refinement of the 
previous construction in order to estimate the Hamming distance of a 
typical configuration to the event $\{0\leftrightarrow 
\mathbb{L}\setminus [-n,n)^2\}$. It allows the use of 
Proposition~\ref{russo_hamming_proposition}, which improves bounds on 
the probability that the origin is connected to distance $n$. With these 
estimates, we show that the cluster size at the origin has finite 
moments of any order, whenever $p<p_{sd}$. Then, it is a standard step 
to obtain exponential decay in the sub-critical phase.

\bigskip

The following two lemmas will be useful in the proofs of both theorems.  
We start by proving that crossings of long rectangles exist with very 
high probability when $p>p_{sd}$.

\begin{lemma}\label{long_path}
  Let $\alpha>1$, $q\geq 1$ and $p>p_{sd}$; there exists 
  $\ep_0=\ep_0(p,q,\al)>0$ and $c_0=c_0(p,q,\al)>0$ such that
  \begin{equation}
    \phi_{p,q,\alpha^2 n}^{\rm p}\left(\C^v\big([0,n)\times[0,\alpha 
    n)\big)\right)\geq 1-c_0n^{-\ep_0}
  \end{equation}
  for every $n\geq1$.
\end{lemma}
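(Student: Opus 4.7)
The plan is to apply the sharp-threshold bound~\eqref{important} to an auxiliary translation-invariant event on the torus, building on the RSW estimate of Theorem~\ref{RSW_torus}.

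First, by Theorem~\ref{RSW_torus} together with the $\pi/2$ rotational symmetry of $\mathbb{L}$, we have $\phi_{p_{sd}, q, \alpha^2 n}^{\rm p}(\C^v([0, n) \times [0, \alpha n))) \geq c(\alpha) > 0$, uniformly in $n$: the specific-rectangle vertical crossing probability is already bounded below at the self-dual point.

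Since $\C^v([0, n) \times [0, \alpha n))$ is not translation-invariant on the torus of size $m := \alpha^2 n$, I would instead apply \eqref{important} to the fully translation-invariant, increasing event
\[
A_n := \bigcup_{(k, \ell)} \C^v\big([k, k+n) \times [\ell, \ell + \alpha n)\big),
\]
the union running over all torus translates. The containment $A_n \supseteq \C^v([0, n) \times [0, \alpha n))$ gives $\phi_{p_{sd}, q, m}^{\rm p}(A_n) \geq c(\alpha)$, and \eqref{important} with $p_1 = p_{sd}$ and $p_2 = p$ then yields
\[
\phi_{p, q, m}^{\rm p}(A_n) \geq 1 - c'\, m^{-c(p - p_{sd})} = 1 - c_0\, n^{-\epsilon_0}
\]
for some $\epsilon_0 = \epsilon_0(p, q, \alpha) > 0$ and $c_0 = c_0(p, q, \alpha)$.

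The main obstacle is transferring this bound on $A_n$ back to the specific rectangle. By translation invariance on the torus, every translate $\C^v([k, k+n) \times [\ell, \ell+\alpha n))$ has the same probability $p_n$, but a naive FKG estimate on the complements gives only $(1 - p_n)^{m^2} \leq \phi_p^{\rm p}(A_n^c)$, which does not force $p_n \to 1$ because the $m^2$ in the exponent wipes out the $n^{-\epsilon_0}$ factor. To circumvent this, I would replace $A_n$ by a stronger translation-invariant event whose occurrence directly implies a crossing of a specific rectangle, such as the existence of a vertical open circuit on the torus contained in some horizontal strip of width $n$: such a wrapping circuit provides a vertical crossing of every $\alpha n$-tall window in its strip, and its probability is bounded below at $p_{sd}$ by iterating Theorem~\ref{RSW_torus} for larger aspect ratios. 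The remaining step, transferring ``some strip of width $n$ contains a wrapping circuit'' down to the specific strip $[0, n) \times [0, m)$, is still delicate; I expect it to follow from a Paley-Zygmund / second-moment argument on the number of such good strips, possibly combined with a bootstrap that iterates the sharp-threshold bound as the crossing probability grows.
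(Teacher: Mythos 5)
You correctly spot the real obstacle: the naive translation-invariant event $A_n$, a union over all $m^2$ torus translates, gives only $(1-p_n)^{m^2}\le\phi_p^{\rm p}(A_n^c)$, and the $m^2$ exponent destroys the gain from~\eqref{important}. This is exactly the issue the paper has to route around. However, your proposed fix is not complete, and the missing idea is not a second-moment argument but a much more elementary counting observation.

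The paper takes the translation-invariant event $B$ to be the existence of a vertical crossing of \emph{some} $(n/2)\times(\alpha^2 n)$ rectangle in the torus, i.e.\ a full-height path confined to a thin vertical strip. The key point is that such a strip occupies one of only $2\alpha^2$ horizontal positions (sliding by $n/2$ across the torus of circumference $\alpha^2 n$), and the full-height crossing, restricted to one of the $\alpha$ vertical windows of height $\alpha n$, yields a vertical crossing of one of the $2\alpha^3$ rectangles $[in/2,in/2+n)\times[j\alpha n,(j+1)\alpha n)$, each a translate of $\C^v([0,n)\times[0,\alpha n))$. Since $B$ is therefore contained in a union of \emph{boundedly many} (namely $2\alpha^3$) translates of the target event, the FKG square-root trick on the complements gives
$\phi(B^c)\ge\bigl(1-\phi(\C^v)\bigr)^{2\alpha^3}$,
and the sharp-threshold bound $\phi(B^c)\le c n^{-\epsilon}$ then forces $\phi(\C^v)\ge 1-(cn^{-\epsilon})^{1/(2\alpha^3)}$. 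No Paley--Zygmund, no bootstrap. Your winding-circuit idea is in the right spirit --- it is a full-height object in a strip --- but you have not observed that only $O(\alpha^2)$ strip positions (hence $O(\alpha^3)$ sub-rectangles) need be considered, which is what replaces the fatal exponent $m^2$ by a constant. Without that observation, your sketch of ``Paley--Zygmund / second-moment on the number of good strips, possibly with a bootstrap'' does not obviously close the argument: a second-moment bound controls a positive probability of existence, not the desired rapid convergence of $\phi(\C^v)$ to $1$; and the bootstrap you gesture at would need to be made precise. As written, the proof has a genuine gap at the transfer step.

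Two smaller remarks. First, using width $n/2$ (rather than $n$) for the strip is what lets the translates overlap enough to guarantee one of the target rectangles is crossed; with width $n$ and disjoint strips a full-height crossing could straddle two windows horizontally. Second, requiring a \emph{circuit} (wrapping around) is stronger than needed: a single full-height crossing of a thin strip already suffices, and its probability at $p_{sd}$ is directly bounded below by Theorem~\ref{RSW_torus} applied to a strip of aspect ratio $2\alpha^2$.
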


\begin{proof}
  The proof will make it clear that it is sufficient to treat the case 
  of integer $\al$, we therefore assume that $\al$ is a positive integer 
  (not equal to $1$).  Let $B$ be the event that there exists a vertical 
  crossing of a rectangle with dimensions $(n/2,\al^2 n)$ in the torus 
  of size $\al^2n$. This event is invariant under translations and 
  satisfies
  \begin{equation*}
    \phi_{p_{sd},q,\al^2 n}^{\rm p}(B)\geq \phi_{p_{sd},q,\alpha^2 
    n}^{\rm p}\big(\C^v([0,n/2)\times[0,\al^2 n))\big)\geq c(2\al^2)
  \end{equation*}
  uniformly in $n$.

  Let $p>p_{sd}$. Since $B$ is increasing, we can apply 
  Theorem~\ref{symmetric_sharp_threshold} (more precisely 
  \eqref{important}) to deduce that there exist $\ep=\ep(p,q,\al)$ and 
  $c=c(p,q,\al)$ such that
  \begin{equation}\label{aa}
    \phi_{p,q,\alpha^2 n}^{\rm p}(B)\geq 1-cn^{-\ep}.
  \end{equation}
  If $B$ holds, one of the $2\alpha^3$ rectangles
  \begin{equation*}
    [in/2,in/2+n)\times[j\alpha n,(j+1)\alpha n), 
    \quad(i,j)\in\{0,\cdots,2\al^2-1\}\times\{0,\cdots,\al-1\}
  \end{equation*}
  must be crossed from top to bottom. We denote these events by $A_{ij}$ 
  --- they are translates of $\C^v([0,n)\times[0,\alpha n))$.  Using the 
  FKG inequality in the second line (this is another instance of the 
  ``square-root trick'' mentioned earlier), we find
  \begin{align*}
    \phi_{p,q,\alpha^2 n}^{\rm p}(B)&\leq 1-\phi_{p,q,\alpha^2 n}^{\rm 
    p}(B^c)\leq1-\phi_{p,q,\alpha^2 n}^{\rm p}(\cap_{i,j}A_{ij}^c)\\
    &\leq 1-\prod_{i,j}\phi_{p,q,\alpha^2 n}^{\rm 
    p}(A_{ij}^c)=1-\left[1-\phi_{p,q,\alpha^2 n}^{\rm 
    p}\big(\C^v([0,n)\times[0,\alpha n)\big)\right]^{2\alpha^3}.
  \end{align*}
  Plugging \eqref{aa} into the previous inequality, we deduce
  \begin{equation*}
    \phi_{p,q,\alpha^2 n}^{\rm p}\left(\C^v\big([0,n)\times[0,\alpha 
    n)\big)\right)\geq 1-(cn^{-\ep})^{1/(2\alpha^3)}.
  \end{equation*}
  The claim follows by setting $c_0:=c^{1/(2\alpha)^3}$ and 
  $\ep_0:=\ep/(2\alpha^3)$.
\end{proof}

\bigskip

Let $\al>1$ and $n\geq 1$; we define the annulus \[ A_n^\al := 
[-\al^{n+1},\al^{n+1}]^2\setminus [-\al^n,\al^n]^2.  \] An \emph{open 
circuit} in an annulus is an open path which surrounds the origin.  
Denote by $\A^\al_n$ the event that there exists an open circuit 
surrounding the origin and contained in $A_n^\al$, together with an open 
path from this circuit to the boundary of $[-\al^{n+2},\al^{n+2}]^2$, 
see Figure~\ref{fig:event_A_n}. The following lemma shows that the 
probability of $\A_n^\al$ goes to $1$, provided that $p>p_{sd}$ and that 
we fixed wired boundary conditions on $[-\al^{n+2},\al^{n+2}]^2$.

\begin{figure}[ht!]
  \begin{center}
    \includegraphics[width=0.7\hsize]{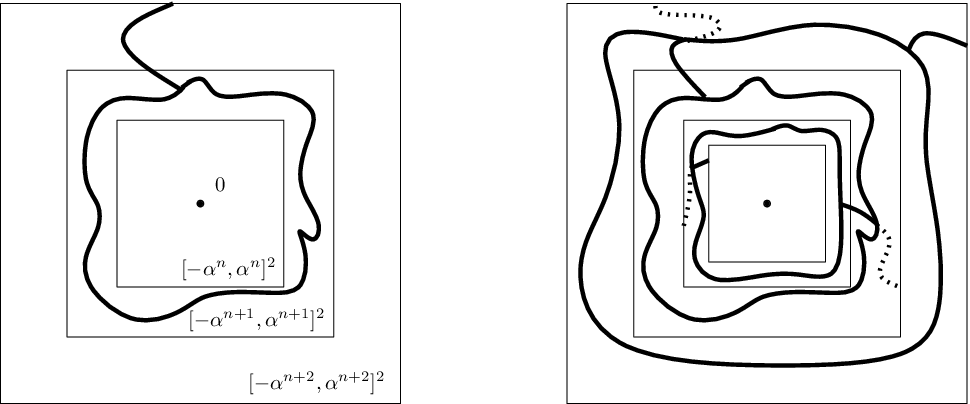}
  \end{center}
  \caption{\textbf{Left:} The event $\A_n^\alpha$.  \textbf{Right:} The 
  combination of events $\A_n^\alpha$: we see that it indeed constructs 
  a path from the origin to infinity.}
  \label{fig:event_A_n}
\end{figure}

\begin{lemma}\label{circuit}
  Let $\al>1$, $q\geq 1$ and $p>p_{sd}$; there exists $c_1=c_1(p,q,\al)$ 
  and $\ep_1=\ep_1(p,q,\al)$ such that for every $n\geq 1$,
  \begin{equation*}
    \phi_{p,q,\al^{n+2}}^{1}(\A^{\al}_n)\geq 1-c_1e^{-\ep_1n}.
  \end{equation*}
\end{lemma}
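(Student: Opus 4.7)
The plan is to realize $\A_n^\alpha$ as the intersection of a bounded number of increasing long-rectangle crossings, control each via Lemma~\ref{long_path}, and combine using the FKG inequality.

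For the circuit inside $A_n^\alpha$, I would take the four plank rectangles
$R^{\rm top}=[-\alpha^{n+1},\alpha^{n+1}]\times[\alpha^n,\alpha^{n+1}]$,
$R^{\rm right}=[\alpha^n,\alpha^{n+1}]\times[-\alpha^{n+1},\alpha^{n+1}]$,
and the symmetric $R^{\rm bot}$, $R^{\rm left}$, each of short side $\alpha^n(\alpha-1)$ and aspect ratio $2\alpha/(\alpha-1)$, both depending only on $\alpha$. Requiring $R^{\rm top},R^{\rm bot}$ to be crossed horizontally and $R^{\rm right},R^{\rm left}$ vertically, the two crossings entering a corner overlap square (e.g.\ $[\alpha^n,\alpha^{n+1}]^2$ for $R^{\rm top}\cap R^{\rm right}$) are a left-right and a top-bottom crossing of that square and must share a lattice vertex by planarity of $\mathbb{L}$; hence their union contains an open circuit surrounding the origin inside $A_n^\alpha$. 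For the path from the circuit to $\partial[-\alpha^{n+2},\alpha^{n+2}]^2$, I would add a fifth rectangle $R^{\rm rad}=[0,\alpha^{n+2}]\times[-\alpha^n,\alpha^n]$ crossed horizontally: its left end lies inside the inner square $[-\alpha^n,\alpha^n]^2$ (hence inside any circuit around the origin), its right end lies on $\{x=\alpha^{n+2}\}$, so a Jordan-curve argument forces the crossing to share a vertex with the circuit.

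For each of these five rectangles $R_i$ (short side $\geq c(\alpha)\alpha^n$, aspect ratio bounded in terms of $\alpha$ alone), I would rerun the proof of Lemma~\ref{long_path} on the torus of size $M=2\alpha^{n+2}$, chosen so that the box $B=[-\alpha^{n+2},\alpha^{n+2}]^2$ embeds as a subgraph: Theorem~\ref{symmetric_sharp_threshold} applied to the translation-invariant event ``some translate of $R_i$ is crossed in the appropriate direction'' (whose probability at $p_{sd}$ is bounded below by Theorem~\ref{RSW_torus}) gives failure probability $\leq C M^{-c(p-p_{sd})}$, and the square-root trick bounds the failure probability of a specific translate by this quantity raised to $1/N_M$, where $N_M$ is the number of translates of $R_i$ in the torus. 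Since $M$ and $R_i$ both have size of order $\alpha^n$, $N_M$ is bounded by a constant depending only on $\alpha$, so $\phi^{\rm p}_{p,q,M}(\C(R_i))\geq 1-C'\alpha^{-\epsilon n}$ with $\epsilon=\epsilon(p,q,\alpha)>0$. The comparison between boundary conditions combined with the domain Markov property (exactly as in the proof of Corollary~\ref{RSW_bulk}) yields $\phi^1_{p,q,\alpha^{n+2}}(\C(R_i))\geq\phi^{\rm p}_{p,q,M}(\C(R_i))$, and FKG on the five increasing events gives $\phi^1_{p,q,\alpha^{n+2}}(\A_n^\alpha)\geq 1-5C'\alpha^{-\epsilon n}$, which is the claim with $\epsilon_1=\epsilon\log\alpha$.

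The main point requiring care is the choice of ambient torus: it must be strictly larger than $B$ so that $B$ embeds as a subgraph and the stochastic domination $\phi^1_{p,q,B}\succeq\phi^{\rm p}_{p,q,M}|_B$ holds, yet still of linear size comparable to $\alpha^n$ so that $N_M$ remains bounded and the square-root trick does not destroy the exponential decay in $n$. Taking $M=2\alpha^{n+2}$ achieves both. The rest is essentially a reprise of Section~\ref{sec:crossings} at a different scale, together with the routine but careful geometric verification that the five rectangle crossings glue into the circuit-plus-radial-path topology of $\A_n^\alpha$ via actual vertex-sharings of open paths on $\mathbb{L}$.
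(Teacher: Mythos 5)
Your proposal is correct and follows essentially the same approach as the paper: decompose $\A_n^\al$ into a bounded number (five) of long-rectangle crossings whose aspect ratios depend only on $\al$, estimate each via the sharp-threshold bound of Lemma~\ref{long_path} on a torus of comparable size, transfer from periodic to wired boundary conditions, and combine via FKG. The only differences are cosmetic: the paper uses $R_5 = [-\al^{n+2},\al^{n+2}]\times[-\al^{n+1},\al^{n+1}]$ as the radial rectangle (whose crossing automatically meets the vertical crossing of $R_1$) where you use the thinner $[0,\al^{n+2}]\times[-\al^n,\al^n]$ anchored inside the inner square, and you make the ambient torus size $M=2\al^{n+2}$ explicit, which the paper glosses over slightly but is needed for the comparison step to land on the wired box $[-\al^{n+2},\al^{n+2}]^2$.
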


\begin{proof}
  First, observe that $\A^{\alpha}_n$ occurs whenever the following 
  events occur simultaneously:
  \begin{itemize}
    \item The following rectangles are crossed vertically: 
      \begin{align*}
        R_1 &:= 
        [\al^n,\alpha^{n+1}]\times[-\alpha^{n+1},\alpha^{n+1}],\\
        R_2 &:= 
        [-\alpha^{n+1},-\al^n]\times[-\alpha^{n+1},\alpha^{n+1}];
      \end{align*}
    \item The following rectangles are crossed horizontally: 
      \begin{align*}
        R_3 &:= 
        [-\alpha^{n+1},\alpha^{n+1}]\times[\al^n,\alpha^{n+1}],\\
        R_4 &:= 
        [-\alpha^{n+1},\alpha^{n+1}]\times[-\alpha^{n+1},-\al^n],\\
        R_5 &:= 
        [-\alpha^{n+2},\alpha^{n+2}]\times[-\alpha^{n+1},\alpha^{n+1}].
      \end{align*}
  \end{itemize}
  For the measure in the torus, these events have probability greater 
  than $1-c(\al^n)^{-\ep}$ with $c=c_0(p,q,2\al/(\al-1))$ and 
  $\ep=\ep_0(p,q,2\al/(\al-1))$. Using the FKG inequality, we obtain
  $$\phi_{p,q,\alpha^{n+2}}^{\rm p}(\A^\alpha_n)\geq 
  (1-c(\al^n)^{-\ep})^5$$ from which we deduce the following estimate, 
  harnessing the comparison between boundary conditions, 
  $$\phi_{p,q,\alpha^{n+2}}^{1} (\A^\alpha_n) \geq 
  (1-c(\al^n)^{-\ep})^5.$$ The claim follows by setting $c_1:=5c$ and 
  $\ep_1:=\ep\log \al$.
\end{proof}

\bigskip

The following proposition readily implies Theorem~\ref{critical_value}; 
It will also be useful in the proof of Theorem~\ref{exponential_decay}.  
We want to prove that the probability of the intersection of events 
$\A_n^\al$ is of positive probability when $p>p_{sd}$. So far, we know 
that there is an open circuit with very high probability when we 
consider the random-cluster measure with wired boundary condition in a 
slightly larger box. In order to prove the result, we assume the 
existence of a large circuit. Then, we iteratively condition on events 
$\A_{n-k}^\al$, $k\geq 0$. When conditioning `from the outside to the 
inside', we guarantee that at step $k$, there exists an open circuit in 
$A_{n-k+1}^\al$ that surrounds $A_{n-k}^\al$. Using comparison between 
boundary conditions, we can assure that the measure in $A_{n-k}^\al$ 
stochastically dominates the measure in $A_{n-k+1}^\al$ with wired 
boundary condition. In other words, we keep track of advantageous 
boundary conditions. Note that the reasoning, while reminiscent of 
Kesten's construction of an infinite path for percolation, is not 
standard.  

\begin{proposition}\label{path_circuits}
  Let $\alpha>1$, $q\geq 1$ and $p>p_{sd}$; there exist 
  $c,c_1,\varepsilon_1>0$ (depending on $p$, $q$ and $\alpha$) such that 
  for every $N\geq 1$, $$\phi_{p,q}\left(\bigcap_{n\geq 
  N}\A_N^{\alpha}\right)\geq c\prod_{k=N}^{\infty}(1-c_1{\rm 
  e}^{-\ep_1k}) > 0.$$
\end{proposition}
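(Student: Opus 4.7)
The plan is to establish this estimate first in a large finite box with wired boundary conditions and then pass to the infinite-volume limit. Since $p > p_{sd}$ implies uniqueness of the infinite-volume measure, the finite-volume wired measures $\phi_{p,q,\alpha^{M+2}}^1$ decrease weakly to $\phi_{p,q}$ as $M \to \infty$. It suffices to establish the finite-volume estimate
\begin{equation*}
    \phi_{p,q,\alpha^{M+2}}^1\Bigl(\bigcap_{n=N}^{M} \A_n^\alpha\Bigr) \geq c \prod_{k=N}^{M}\bigl(1 - c_1 \mathrm{e}^{-\varepsilon_1 k}\bigr)
\end{equation*}
uniformly in $M \geq N$; for fixed $M_0$ the event $\bigcap_{n=N}^{M_0} \A_n^\alpha$ is a cylinder, so weak convergence as $M \to \infty$ and then continuity of measure from above as $M_0 \to \infty$ yields the infinite-volume bound.

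To prove the finite-volume estimate, I would use iterated conditioning from the outside in. Write
\begin{equation*}
    \phi_{p,q,\alpha^{M+2}}^1\Bigl(\bigcap_{n=N}^{M} \A_n^\alpha\Bigr) = \phi_{p,q,\alpha^{M+2}}^1(\A_M^\alpha) \prod_{k=1}^{M-N} \phi_{p,q,\alpha^{M+2}}^1\Bigl(\A_{M-k}^\alpha \,\Big|\, \bigcap_{j<k} \A_{M-j}^\alpha\Bigr).
\end{equation*}
The unconditional factor is handled directly by Lemma~\ref{circuit}. For the conditional factor at step $k$, the prior conditioning guarantees an open circuit $\Gamma$ surrounding the origin in the innermost conditioned annulus $A_{M-k+1}^\alpha$, together with an open path from $\Gamma$ to $\partial[-\alpha^{M-k+3},\alpha^{M-k+3}]^2$. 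Taking $\Gamma$ to be the outermost such circuit, so that $\{\Gamma = \gamma\}$ is measurable with respect to edges lying outside $\gamma$, I apply the domain Markov property to the region $D_\Gamma$ enclosed by $\Gamma$: since $\Gamma$ is an open circuit, all its vertices belong to a single cluster, so the induced boundary condition on $\partial D_\Gamma$ is at least as wired as the pure wired one. By \eqref{comparison_between_boundary_conditions}, the conditional law inside $D_\Gamma$ stochastically dominates $\phi_{p,q,D_\Gamma}^1$.

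The key geometric fact is that $D_\Gamma \supset [-\alpha^{M-k+1},\alpha^{M-k+1}]^2$, which contains the annulus $A_{M-k}^\alpha$ and comfortably accommodates the four long-rectangle crossings from the proof of Lemma~\ref{circuit} that realize a circuit in $A_{M-k}^\alpha$. An additional long-rectangle crossing inside $D_\Gamma$ connects this inner circuit to $\partial D_\Gamma = \Gamma$; chained with the pre-existing outward path from $\Gamma$ (which crosses $\partial[-\alpha^{M-k+2},\alpha^{M-k+2}]^2$), this yields the full event $\A_{M-k}^\alpha$. Lemma~\ref{long_path} and FKG, exactly as in the proof of Lemma~\ref{circuit}, then give the conditional probability at least $1 - c_1 \mathrm{e}^{-\varepsilon_1(M-k)}$, uniformly in the realization of $\Gamma$ and of the exterior configuration.

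The main subtlety I expect is controlling the randomly shaped domain $D_\Gamma$: one must ensure the long-rectangle construction can be carried out inside $D_\Gamma$ with uniform probability. The four inner rectangles lie entirely in $[-\alpha^{M-k+1},\alpha^{M-k+1}]^2 \subset D_\Gamma$ and cause no difficulty; the outward connection need only reach the random $\Gamma$ from the inner circuit, a weaker target than in Lemma~\ref{circuit} itself (where the analogous path must reach the outer square $\partial[-\alpha^{M-k+2},\alpha^{M-k+2}]^2$), so its probability is of the same order. Multiplying the bounds and passing to the limit yields the claim, positivity of the infinite product following from summability of $\mathrm{e}^{-\varepsilon_1 k}$, with the prefactor $c$ absorbing finitely many initial factors that may fail to be positive for small $k$.
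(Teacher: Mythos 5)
Your proposal is correct and follows the same core mechanism as the paper: iterated conditioning from the outside in, taking the outermost open circuit $\Gamma$ so that $\{\Gamma=\gamma\}$ is measurable with respect to edges outside $\gamma$, and using the domain Markov property plus comparison of boundary conditions to dominate the conditional law inside $D_\gamma$ by the wired measure, then invoking Lemma~\ref{circuit}. There are, however, two genuine (and legitimate) departures from the paper's route worth noting. First, you establish the bound in the finite wired box $\phi_{p,q,\alpha^{M+2}}^1$ and then pass to the infinite-volume limit using uniqueness at $p>p_{sd}$ and monotone convergence on cylinder events, whereas the paper carries out the chain-rule decomposition directly under the infinite-volume measure $\phi_{p,q}$. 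Second, and as a consequence of the first, you can bound the unconditioned outermost factor $\phi_{p,q,\alpha^{M+2}}^1(\A_M^\alpha)$ directly by Lemma~\ref{circuit}, which gives a bound tending to $1$; the paper cannot do this in infinite volume (Lemma~\ref{circuit} is a statement for the wired box of exactly the right size, and one only gets an upper, not a lower, bound after taking the volume to infinity), so it instead resorts to Corollary~\ref{RSW_bulk} at $p_{sd}$ together with monotonicity in $p$ to get a uniform constant $c>0$ for the first factor. Your route thus bypasses Corollary~\ref{RSW_bulk} in this proposition and is slightly cleaner, at the cost of the extra bookkeeping in the limiting step. The only place where your write-up is a little imprecise is the handling of the outward connection from the inner circuit to $\Gamma$: the clean way to make it rigorous is to note that $\phi^1_{p,q,D_\gamma}$ coincides with $\phi^1_{p,q,\alpha^{M-k+2}}$ conditioned on all edges outside $D_\gamma$ being open (an increasing conditioning, hence FKG-dominating the unconditioned measure), and under that conditioning the event \emph{circuit in $A_{M-k}^\alpha$ connected to $\gamma$} coincides with $\A_{M-k}^\alpha$, which is the argument implicit in the paper.
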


\begin{proof}
  Let $\al>1$, $q\geq1$, $p>p_{sd}$, $N \geq 1$ and recall that there is 
  a unique infinite-volume measure $\ph$. For every $n\geq 1$, we know 
  that
  \begin{equation}\label{b}
    \ph\left(\bigcap_{k=N}^n\A_n^\al\right)= 
    \ph(\A_n^\al)\prod_{k=N}^{n-1}\ph(\A^\al_k|\A^\al_j,k+1\leq j\leq 
    n).
  \end{equation}

  On the one hand, let $k\in[N,n-1]$. Conditionally to $\A^\al_j$, 
  $k+1\leq j\leq n$, we know that there exists a circuit in the annulus 
  $A_{k+1}^\al$.  Exploring from the outside, we shall consider the most 
  exterior such circuit, denoted by $\Ga$. Conditionally to $\Ga=\g$, 
  the unexplored part of the box $[-\al^{k+2},\al^{k+2}]^2$ follows the 
  law of a random-cluster configuration with wired boundary condition.  
  In particular, the conditional probability that there exists a circuit 
  in $A^\al_k$ connected to $\g$ is greater than the probability that 
  there exists a circuit in $A^\al_k$ connected to the boundary of 
  $[-\al^{k+2},\al^{k+2}]^2$ with wired boundary condition. Therefore, 
  we obtain that almost surely
  \begin{align*} \ph(\A^\al_{k}|\A^\al_{j},k+1\leq j\leq 
    n)&=\ph\big(\ph(\A^\al_k|\Ga=\g)\big)\\
    &\geq\ph\big(\phi_{p,q,\al^{k+2}}^{1}(\A^\al_k)\big)\\
    &\geq 1-c_1{\rm e}^{-\ep_1k}
  \end{align*}
  where we have harnessed Lemma~\ref{circuit} in the last inequality. 

  On the other hand, for $p=p_{sd}$, consider the event $\A_n^{\al}$ in 
  the bulk. Thanks to Corollary~\ref{RSW_bulk}, its probability is 
  bounded away from $0$ uniformly in $n$. Since the event is increasing, 
  we obtain that there exists $c=c(\al)>0$ such that
  \begin{equation*}
    \ph(\A_n^\al)=\ph^1(\A_n^\al)\geq c
  \end{equation*}
  for any $n\geq N$ and $p> p_{sd}$. Plugging the two estimates into 
  \eqref{b}, we obtain \[ \ph\left(\bigcap_{k=N}^n\A_n^\al\right)\geq 
  c\prod_{k=N}^{n-1}(1-c_1{\rm e}^{-\ep_1k})\geq 
  c\prod_{k=N}^{\infty}(1-c_1{\rm e}^{-\ep_1k}).  \] Letting $n$ go to 
  infinity concludes the proof.
\end{proof}

\bigskip

\begin{proof}[Proof of Theorem~\ref{critical_value}]
  The bound $p_c\geq p_{sd}$ is provided by Zhang's argument, as 
  explained in Section~\ref{sec:basic}. For $p>p_{sd}$, fix $\alpha>1$.  
  Applying Proposition~\ref{path_circuits} with $N=1$, we find \[ 
  \ph(0\leftrightarrow\infty)\geq c\ph\left( \bigcap_{n\geq1}\A_n^\al 
  \right)>0 \] so that $p$ is super-critical. The constant $c>0$ is due 
  to the fact that we require $[-\alpha^2,\alpha^2]^2$ to contain open 
  edges only ($c>0$ exists using the finite energy property). Since $p$ 
  is super-critical for every $p>p_{sd}$, we deduce $p_c\leq p_{sd}$.
\end{proof}

\bigskip

\begin{proof}[Proof of Theorem~\ref{exponential_decay}]
  Let $x$ be a site of $\Z^2$, and let $\C_x$ be the cluster of $x$, 
  \ie\  the maximal connected component containing the site $x$. We 
  denote by $|\C_x|$ its cardinality. We first prove that $|\C_x|$ has 
  finite moments of any order. Then we deduce that the probability of 
  $\{|\C_x|\geq n\}$ decays exponentially fast in $n$.  The proof of the 
  Step 2 is extracted from \cite{Grimmett}.

  \paragraph{Step 1: finite moments for $|\C_x|$.}

  Let $d>0$ and $p<p_{sd}$; we want to prove that
  \begin{equation}\label{moment_estimate}
    \ph(|\C_x|^d)<\infty.
  \end{equation}
  In order to do so, let $p_1 := (p+p_{sd})/2$ and define $D_n := 
  \{x\leftrightarrow \Z^2\setminus (x+[-n,n)^2)\}$; denote by $H_n$ the 
  Hamming distance to $D_n$. Note that $H_n$ is the minimal number of 
  closed edges that one must cross in order to go from $x$ to the 
  boundary of the box of size $n$ centered at $x$. Let $$\al := \exp 
  \left[ \frac{p_1-p}{2d+3} \right]>1.$$ We know from 
  Proposition~\ref{path_circuits}, applied to the (super-critical) dual 
  model, that the probability of $\bigcap_{n>N}(\A_n^\al)^\star$ is 
  larger than $c\prod_N^\infty (1-c_1e^{-\varepsilon_1n}) > 0$ 
  ($(\A_n^\al)^\star$ is the occurrence of $\A_n^\al$ in the dual 
  model). Hence, there exists $N=N(p_1,q,\al)$ sufficiently large such 
  that \[ \phi_{p_1,q}\left(\bigcap_{k\geq N}^\infty (\A_n^\al)^\star 
  \right)\geq \frac12. \]On this event, $H_{n}$ is greater than $(\log 
  n/\log \al)-N$ since there is at least one closed circuit in each 
  annulus $A_k^\al$ with $k\geq N$ (thus increasing the Hamming distance 
  by 1).  We obtain \[ \phi_{p_1,q}(H_{n})\geq \left(\frac{\log n}{\log 
  \al}-N\right)\phi_{p_1,q}\left(\bigcap_{k \geq N}^\infty 
  (\A_n^\al)^\star \right)\geq\frac{\log n}{4\log \al} \] for $n$ 
  sufficiently large. We can use \eqref{russo_hamming} to find 
  \begin{equation}\label{exp}
    \phi_{p,q}(D_n)\leq \phi_{p_1,q}(D_n)\exp 
    \big[-4(p_1-p)\phi_{p_1,q}(H_n)\big]\leq n^{-(2d+3)}
  \end{equation}
  for $n$ sufficiently large, from which \eqref{moment_estimate} follows 
  readily.

  \paragraph{Step 2: exponential decay.}

  Note that, from the first inequality of \eqref{exp}, it is sufficient 
  to prove that for some constant $c>0$, $$\liminf_{n\rightarrow \infty} 
  H_n/n \geq c\quad \text{a.s.}$$ in order to show that 
  $\phi_{p,q}(D_n)$ decays exponentially fast.

  Consider a (not necessarily open) self-avoiding path $\g$ going from 
  the origin to the boundary of the box of size $n$. We can bound from 
  below the number $T(\g)$ of closed edges along this path by the 
  following quantity: \[ \frac{T(\g)}{n}\geq\frac1{|\g|}T(\g) \geq 
  \frac1{|\g|}\sum_{z\in \g} \frac{1}{|\C_z|}\geq 
  \left(\frac1{|\g|}\sum_{z\in \g}|\C_z|\right)^{-1}.  \] Indeed, the 
  number of closed edges in $\g$ is larger than the number of distinct 
  clusters intersecting $\g$.  Moreover, if $\mathcal{C}$ denotes such a 
  cluster, we have that $1\geq \sum_{z\in\g}|\C|^{-1}\mathbbm{1}_{z\in 
  \C}$. The last inequality is due to Jensen's inequality. Since $H_n$ 
  can be rewritten as the infimum of $T(\gamma)$ on paths going from $0$ 
  to the boundary of the box, we obtain
  \begin{equation}\label{expression_H_n}
    \frac{H_n}n\geq \inf_{\g:0\leftrightarrow 
    \Z^2\setminus\B_n}\left(\frac{1}{|\g|}\sum_{z\in 
    \g}|\C_z|\right)^{-1}.
  \end{equation}
  The goal of the end of the proof is to give an almost sure lower bound 
  of the right-hand side. We will harness a two-dimensional analogue of 
  the strong law of large number. In order to do that, we need to 
  transform the random variables $|\C_z|$ to obtain independent 
  variables. We start with the following domination.

  Let $(\tilde{\C}_z)_{z\in\B_n}$ be a family of independent subsets of 
  $\Z^2$ distributed as $\C_z$. We claim that $(|{\C}_z|)_{z\in \B_n}$ 
  is stochastically dominated by the family $(M_z)_{z\in \B_n}$ defined 
  as $$M_z:=\sup_{y\in \Z^2:z\in\tilde{\C}_y}|\tilde{\C}_y|.$$

  Let $v_1, v_2, \ldots$ be a deterministic ordering of $\mathbb{Z}^2$.  
  Given the random family $(\tilde{\C}_z)_{z\in\B_n}$, we shall 
  construct a family $(D_z)_{z\in \B_n}$ having the same joint law as 
  $(\C_z)_{z\in \B_n}$ and satisfying the following condition: for each 
  $z$, there exists $y$ such that $D_z\subset\tilde{\C}_y$. First, set 
  $D_{v_1}=\tilde{C}_{v_1}$.  Given $D_{v_1}$, $D_{v_2}$, \ldots, 
  $D_{v_n}$, define $E=\bigcup_{i=1}^n D_{v_1}$. If $v_{n+1}\in E$, set 
  $D_{v_{n+1}}=D_{v_j}$ for some $j$ such that $v_{n+1}\in D_{v_j}$. If 
  $v_{n+1}\notin E$, we proceed as follows. Let $\Delta_eE$ be the set 
  of edges of $\Z^2$ having exactly one end-vertex in $E$. We may find a 
  (random) subset $F$ of $\tilde{C}_{v_{n+1}}$ such that $F$ has the 
  conditional law of $C_{n+1}$ given that all edges in $\Delta_eE$ are 
  closed; we now set $D_{v_{n+1}}=F$. We used the domain Markov property 
  and the positive association. Indeed, we use that the law of 
  $C_{v_{n+1}}$ depends only on $\Delta_eE$, and is stochastically 
  dominated by the law of the cluster in the bulk without any 
  conditioning.  We obtain the required stochastic domination 
  accordingly. In particular, $|\C_z|\leq M_z$ and $M_z$ has finite 
  moments.

  From \eqref{expression_H_n} and the previous stochastic domination, we 
  get
  \begin{equation*}
    \liminf_{n\rightarrow \infty}\frac{H_n}n\geq 
    \liminf_{n\rightarrow\infty}\inf_{\g:0\leftrightarrow 
    \Z^2\setminus\B_n}\left(\frac{1}{|\g|}\sum_{z\in 
    \g}|\C_z|\right)^{-1}\geq 
    \left(\limsup_{n\rightarrow\infty}\sup_{\g:0\leftrightarrow 
    \Z^2\setminus\B_n}\frac{1}{|\g|}\sum_{z\in \g}M_z\right)^{-1}.
  \end{equation*}
  The second step is now to replace $M_z$ by random variables that are 
  independent. We can harness Lemma~2 of \cite{FontesNewman} to show 
  that
  \begin{equation*}
    \left(\limsup_{n\rightarrow\infty}\sup_{\g:0\leftrightarrow 
    \Z^2\setminus\B_n}\frac{1}{|\g|}\sum_{z\in \g}M_z\right)^{-1}\geq 
    \left(2\limsup_{n\rightarrow\infty}\sup_{|\Ga|\geq n}
    \frac{1}{|\Ga|}\sum_{z\in \g}|\tilde{\C}_z|^2\right)^{-1}
  \end{equation*}
  where the supremum is over all finite connected graphs $\Gamma$ of 
  cardinality larger than $n$ that contain the origin (also called 
  lattice animals). 

  Since the $|\tilde{\C}_z|^2$ are independent and have finite moments 
  of any order, the main result of \cite{CoxGandolfiGriffinKesten} 
  guarantees that

  $$2\limsup_{n\rightarrow\infty}\sup_{|\Ga|\geq n} 
  \frac{1}{|\Ga|}\sum_{z\in \g}|\tilde{\C}_z|^2\leq C\quad a.s.$$ for 
  some $C>0$. Therefore, with positive probability, $\liminf H_n/n$ is 
  greater than a given constant, which concludes the proof.
\end{proof}

\section{The critical point for the triangular and hexagonal lattices}
\label{sec:triangular}

Let $\mathbb T$ be the triangular lattice of mesh size $1$, embedded in 
the plane in such a way that the origin is a vertex and the edges of 
$\mathbb{T}$ are parallel to the lines of equations $y=0$, 
$y=\sqrt{3}x/2$ and $y=-\sqrt{3}x/2$. The dual graph of this lattice is 
a hexagonal lattice, denoted by $\mathbb{H}$, see 
Figure~\ref{fig:reseau_triangular}. Via planar duality, it is sufficient 
to handle the case of the triangular lattice in order to prove 
Theorem~\ref{triangular}. Define $p_{\mathbb{T}}$ as being the unique 
$p\in (0,1)$ such that $y^3+3y^2-q=0$, where $y := p_{\mathbb{T}} / 
(1-p_{\mathbb{T}})$. The goal is to prove that 
$p_c(\mathbb{T})=p_{\mathbb{T}}$. 

\begin{figure}[ht!]
  \begin{center}
    \includegraphics[width=0.8\hsize]{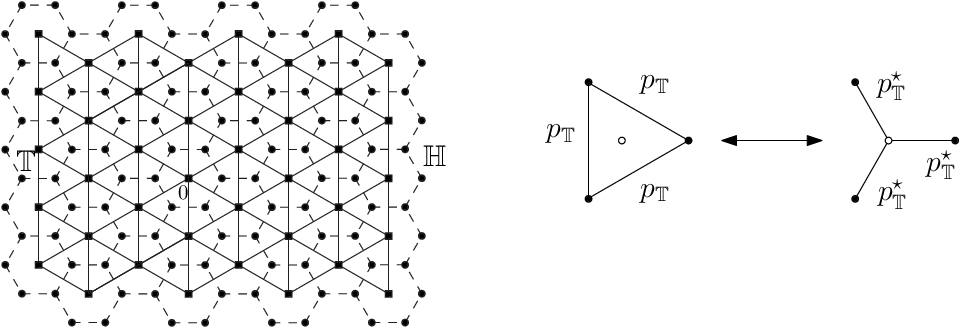}
  \end{center}
  \caption{\textbf{Left:} The triangular lattice $\mathbb{T}$ with its 
  dual lattice $\mathbb{H}$.    \textbf{Right:} The exchange of the two 
  patterns does not alter the random-cluster connective properties of 
  the black vertices.}
  \label{fig:reseau_triangular}
\end{figure}

\bigskip

The general strategy is the same as in the square lattice case. We prove 
that at $p=p_\mathbb{T}$, a crossing estimate similar to 
Theorem~\ref{RSW_torus} holds. Sharp threshold arguments and proofs of 
Section~\ref{sec:proofs} can be adapted \emph{mutatis mutandis}, 
replacing square-shaped annuli by hexagonal-shaped annuli. The crossing 
estimate must be slightly modified, and we present the few changes. It 
harnesses the planar-duality between the triangular and the hexagonal 
lattices, and the so-called \emph{star-triangle transformation} (see 
\emph{e.g.} Section 6.6 of \cite{Grimmett} and 
Figure~\ref{fig:reseau_triangular}). We assume that the reader is 
already familiar with the star-triangle transformation.

Let $e_1=\sqrt{3}/2+{\rm i}/2$ and $e_2={\rm i}$; whenever we write 
coordinates, they are understood as referring to the basis $(e_1,e_2)$.  
A `rectangle' $[a,b)\times[c,d)$ is the set of points in $z\in 
\mathbb{T}$ such that $z=\lambda e_1+\mu e_2$ with $\lambda\in[a,b)$ and 
$\mu\in[c,d)$ (it has a lozenge shape, see \emph{e.g.} 
Figure~\ref{fig:event_A_triangular}). By analogy with the case of the 
square lattice, $\C_v(D)$ denotes the event that there exists a path 
between the top and the bottom sides of $D$ which stays inside $D$. Such 
a path is called a \emph{vertical open crossing} of the rectangle. Other 
quantities are defined similarly. Let $\mathbb{T}_m$ be the torus of size $m$ constructed using the 'rectangle' of the form $[0,m]\times[0,m]$ with respect to the basis $(e_1,e_2)$. We present the crossing estimate in 
the case of the torus $\mathbb{T}_m$ (deriving the bulk 
estimate follows the same lines as in the square lattice case); 
$\phi_{p_{sd},q,m}^{\rm p}$ denotes the random-cluster measure on 
$\mathbb{T}_m$.

\begin{theorem}\label{RSW_torus_triangular}
  Let $\al>1$ and $q\geq 1$. There exists $c(\al)>0$ such that for every 
  $m>\alpha n>0$,
  \begin{equation}
    \phi_{p_\mathbb{T},q,m}^{\rm p}\big(\C_h([0,n)\times[0,\al 
    n))\big)\geq c(\al).
  \end{equation}
\end{theorem}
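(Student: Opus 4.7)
The overall plan is to mirror the three-step structure of the proof of Theorem~\ref{RSW_torus} on the square lattice: (i) establish a ``square'' crossing estimate, i.e.\ the analogue of Lemma~\ref{square_crossings} for a rhombic region $[0,n)\times[0,n)$ in the basis $(e_1,e_2)$ with probability bounded below uniformly in $m$ and $n$; (ii) promote this to a crossing estimate for rhombi of aspect ratio $3/2$ by a reflection argument analogous to Lemma~\ref{self-duality} and Proposition~\ref{box_crossing}; (iii) use the FKG inequality to combine such crossings and obtain crossings of rhombi of arbitrary aspect ratio $\alpha>1$, with $c(\alpha)$ inherited from the $\alpha=3/2$ estimate.

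The main new difficulty is step (i): the triangular lattice is not self-dual, so the one-line duality argument in the proof of Lemma~\ref{square_crossings} is not available. The replacement, suggested by Figure~\ref{fig:reseau_triangular}, is the star-triangle transformation combined with planar duality. The specific choice $y^3+3y^2-q=0$ is exactly the value of $p_\mathbb{T}$ for which the star-triangle transformation sends the random-cluster measure on $\mathbb{T}$ at $p_\mathbb{T}$ to the random-cluster measure on $\mathbb{H}$ at the corresponding parameter (and conversely), preserving the connective properties of the ``black'' vertices. Since the dual of $\mathbb{H}$ is $\mathbb{T}$ and planar duality on the torus is well-behaved up to a bounded Radon--Nikodym factor depending only on $q$ (as in~\eqref{eq:dualperio}), the composition of a global star-triangle move with planar duality produces a measure on $\mathbb{T}_m$ which is boundedly absolutely continuous with respect to $\phi^{\rm p}_{p_\mathbb{T},q,m}$ and which exchanges the events $\C_h([0,n)^2)$ and $\C_v^\star([0,n)^2)$. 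Reproducing the argument of Lemma~\ref{square_crossings} verbatim, with a $q$-dependent multiplicative constant absorbed into $c(1)$, yields the starting estimate.

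For step (ii), the reflection line $d$ used in Hypothesis $(\star)$ has to be chosen so that the orthogonal symmetry $\sigma_d$ composed with a local star-triangle move sends $\mathbb{T}$ to $\mathbb{H}$ translated appropriately; the configuration in the reflected region is then realised as a dual triangular configuration, up to a bounded cost. With this choice, the construction of the symmetric domain $G(\gamma_1,\gamma_2)$, the surgical definition of $G_0(\gamma_1,\gamma_2)$ from two exploration paths, the comparison of boundary conditions, and the estimate $\phi_{\gamma_1,\gamma_2}(\gamma_1\leftrightarrow\gamma_2)\geq 1/(1+Cq^2)$ all go through with the same logical structure; only numerical constants are affected. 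One then obtains, exactly as in Proposition~\ref{box_crossing}, that a rhombus $[0,n)\times[0,3n/2)$ is crossed vertically with probability bounded below uniformly in $m$.

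Step (iii) is identical to the square-lattice case: tile $[0,\alpha n)\times[0,n)$ with $O(\alpha)$ overlapping rhombi of aspect ratio $1$ and $3/2$, arranged so that a combination of vertical crossings of the square ones and horizontal crossings of the elongated ones forces a horizontal crossing of the whole rhombus, and apply FKG. The main obstacle is genuinely the star-triangle/duality identification in step~(i): one has to verify that on the torus $\mathbb{T}_m$ the composite operation is globally well-defined and that the connectivity events of interest transform correctly under it, with Radon--Nikodym derivative bounded only in terms of $q$. Once that is done, the remainder of the proof is parallel to Section~\ref{sec:crossings}, and the subsequent arguments of Sections~\ref{sec:sharp_threshold}--\ref{sec:proofs} apply \emph{mutatis mutandis} (with hexagonal-shaped annuli replacing square ones) to give Theorem~\ref{triangular}.
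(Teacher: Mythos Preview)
Your three-step outline matches the paper's strategy, and your identification of the star--triangle transformation as the replacement for self-duality is correct. Step~(iii) is identical to the square-lattice case, and your treatment of step~(i) via a global star--triangle move on the torus composed with planar duality is a reasonable way to obtain $c(1)$ (the paper leaves this as an exercise).

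However, your claim in step~(ii) that ``only numerical constants are affected'' understates the difficulty and hides a genuine new ingredient. The star--triangle transformation can only be applied to a triangle all of whose edges are interior to the domain; if a boundary edge of $G(\gamma_1,\gamma_2)$ belongs to such a triangle, the move is forbidden. The paper therefore imposes an additional geometric hypothesis on the pair $(\gamma_1,\gamma_2)$ --- called Hypothesis~$(\star\star)$ --- requiring that for every edge of $\sigma_d(\gamma_1)$ and $\sigma_d(\gamma_2)$, the nearby vertex of $\mathbb{T}$ lies in the interior of the domain. Without this, the proof of the analogue of Lemma~\ref{self-duality} breaks down: after taking the dual and reflecting, one lands on a hexagonal lattice and must apply star--triangle to every upward triangle in the domain to return to $\mathbb{T}$, and boundary triangles obstruct this.

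This has a knock-on effect in the analogue of Proposition~\ref{box_crossing}: the explored paths $\Gamma_1,\Gamma_2$ need not satisfy $(\star\star)$, so one cannot feed them directly into the symmetric-domain lemma. The paper fixes this by replacing each $\Gamma_i$ by a nearby path $\Gamma_i'$, obtained by swapping each ``bad'' edge for the other two sides of its triangle; one must check that this replacement is well-defined (it uses that $\Gamma_1$ is the top-most crossing, hence has no double edges) and that the domain built from $\Gamma_1',\Gamma_2'$ still lies in the unexplored region, so that the conditional-measure comparison goes through. Your sketch omits both the hypothesis and the path-modification step; these are the actual technical content distinguishing the triangular case from the square one.
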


The main difficulty is the adaptation of Lemma~\ref{self-duality}.  
Define the line $d:=-\sqrt 3/3+{\rm i}\R$. The orthogonal symmetry 
$\sigma_d$ with respect to $d$ maps $\mathbb{T}$ to another triangular 
lattice. Note that this lattice is a sub-lattice of $\mathbb{H}$ (in the 
sense that its vertices are also vertices of $\mathbb H$).  Let $\g_1$ 
and $\g_2$ be two paths satisfying the following Hypothesis $(\star)$, 
see Figure~\ref{fig:triangular_symmetry}:
\begin{itemize}
  \item $\g_1$ remains on the left of $d$ and $\g_2$ remains on the 
    right,
  \item $\g_2$ begins at $0$ and $\g_1$ begins on a site of 
    $\mathbb{T}\cap (-\sqrt 3/ 2+{\rm i}\mathbb{R}_+)$,

  \item $\g_1$ and $\sigma_d(\g_2)$ do not intersect (as curves in the 
    plane),
  \item $\g_1$ and $\sigma_d(\g_2)$ end at two sites (one primal and one 
    dual) which are at distance $\sqrt3/3$ from one another.
\end{itemize}
When following the paths in counter-clockwise order, we can create a 
circuit by linking the end points of $\g_1$ and $\sigma_d(\g_2)$ by a 
straight line, the start points of $\sigma_d(\g_2)$ and $\g_2$, the end 
points of $\g_2$ and $\sigma_d(\g_1)$, and the start points of 
$\sigma_d(\g_1)$ and $\g_1$. The circuit $(\g_1,\sigma_d(\g_2), \g_2, 
\sigma_d(\g_1))$ surrounds a set of vertices of $\mathbb{T}$. Define the 
graph $G(\g_1,\g_2)$ with sites being site of $\mathbb{T}$ that are 
surrounded by the circuit $(\g_1,\sigma_d(\g_2),\g_2,\sigma_d(\g_1))$, 
and with edges of $\mathbb{T}$ that remain entirely inside the circuit 
(boundary included). 

\begin{figure}[ht!]
  \begin{center}
    \includegraphics[width=0.4\hsize]{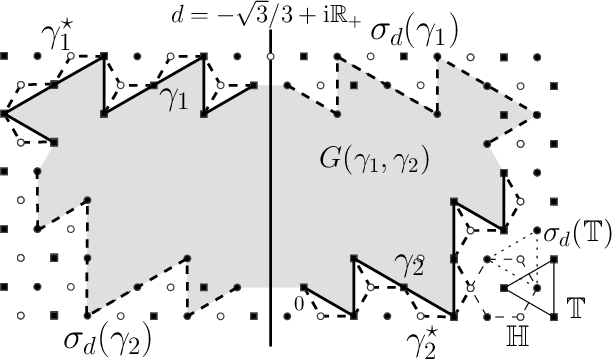}
  \end{center}
  \caption{The graph $G(\g_1,\g_2)$ with the two solid arcs $\g_1$ and 
  $\g_2$ and the dashed arcs $\sigma_d(\g_1)$ and $\sigma_d(\g_2)$. The 
  dual arcs $\g_1^\star$ and $\g_2^\star$ are 
  dotted.}\label{fig:triangular_symmetry}
\end{figure}

We will need an additional technical condition, which we present now.  
Note that for any edge of $\sigma_d(\mathbb{T})$ there is one vertex of 
$\mathbb{T}$ and one vertex of $\mathbb{H}$ at distance $\sqrt 3/6$ from 
its midpoint. We assume that for any edge of $\sigma_d(\g_1)$ and 
$\sigma_d(\g_2)$, the associated vertex of $\mathbb{T}$ is in the 
interior of the domain $G(\g_1,\g_2)$ (therefore, the associated vertex 
of $\mathbb{H}$ is outside the domain, see white vertices in 
Fig~\ref{fig:triangular_symmetry}). We will refer to this condition as 
Hypothesis $(\star\star)$.

The \emph{mixed boundary condition} on this graph is wired on $\g_1$ 
(all the edges are pairwise connected), wired on $\g_2$, and free 
elsewhere.  We denote the measure on $G(\g_1,\g_2)$ with parameters 
$(p_\mathbb{T},q)$ and mixed boundary condition by 
$\phi_{p_\mathbb{T},q,\g_1,\g_2}$ or more simply $\phi_{\g_1,\g_2}$.  
With these definitions, we have an equivalent of 
Lemma~\ref{self-duality}:

\begin{lemma}\label{self-duality_triangular}
  For any $\g_1,\g_2$ satisfying Hypotheses $(\star)$ and 
  $(\star\star)$, we have $$\phi_{\g_1,\g_2}(\g_1\leftrightarrow 
  \g_2)\geq \frac 1{1+q^2}.$$
\end{lemma}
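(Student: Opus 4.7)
The plan is to mirror the proof of Lemma~\ref{self-duality} as closely as possible, with one extra ingredient to deal with the fact that on the triangular lattice $\sigma_d$ does \emph{not} map $\mathbb{T}$ to itself but rather to a sublattice of $\mathbb{H}$. The required tool is the star-triangle transformation, whose invariance condition on edge-weights is precisely $y^3+3y^2-q=0$; this is exactly what dictates the choice $p=p_\mathbb{T}$. Hypothesis $(\star\star)$ is designed so that this transformation can be applied unambiguously along the reflected boundary arcs $\sigma_d(\g_1)$ and $\sigma_d(\g_2)$.

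First I would write the self-duality identity exactly as in \eqref{self-duality1}: planar duality on $G(\g_1,\g_2)$, seen as a subgraph of $\mathbb{T}$, shows that if $\g_1$ is not connected to $\g_2$ then in the dual configuration the two dual arcs adjacent to $\sigma_d(\g_1)$ and $\sigma_d(\g_2)$ must be joined by a dual-open path. Pulling back by $\sigma_d$, this reads
$$1 = \phi_{\g_1,\g_2}(\g_1\leftrightarrow\g_2) + \sigma_d*\phi^\star_{\g_1,\g_2}(\g_1\leftrightarrow\g_2),$$
where $\sigma_d*\phi^\star_{\g_1,\g_2}$ is the image under reflection of the dual measure.

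The second and crucial step is to identify $\sigma_d*\phi^\star_{\g_1,\g_2}$ with (something very close to) a random-cluster measure living on the \emph{same} graph $G(\g_1,\g_2)$. After reflection, the portion of the dual graph arising from $\sigma_d(\g_1),\sigma_d(\g_2)$ is naturally a piece of $\mathbb{T}$, while the portion arising from $\g_1,\g_2$ appears as hexagonal \emph{stars}, one for each edge of $\g_1\cup\g_2$. Hypothesis $(\star\star)$ guarantees that the center of each such star is the $\mathbb{T}$-vertex sitting in the interior of $G(\g_1,\g_2)$ (and not its $\mathbb{H}$-partner outside). Applying the star-triangle transformation at each of these interior centers — which is measure-preserving at $p=p_\mathbb{T}$ and, crucially, does not affect the open edges making up $\g_1$ or $\g_2$ themselves — replaces every such star by a triangle and produces a genuine random-cluster measure on $G(\g_1,\g_2)$ with parameters $(p_\mathbb{T},q)$ and with boundary condition wiring $\g_1\cup\g_2$ as a \emph{single} wired arc, free elsewhere.

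The last step is a Radon-Nikodym comparison with $\phi_{\g_1,\g_2}$, verbatim from the square-lattice case. The only discrepancy between the two measures is that $\g_1$ and $\g_2$ are wired together in one but separately in the other, so $k(\omega,\xi)$ differs by at most $1$ and the ratio of configuration-weights lies in $[1/q^2,q^2]$. This yields $\sigma_d*\phi^\star_{\g_1,\g_2}(\g_1\leftrightarrow\g_2)\leq q^2\,\phi_{\g_1,\g_2}(\g_1\leftrightarrow\g_2)$, which combined with the self-duality identity gives the desired bound $1/(1+q^2)$. The main obstacle is executing the star-triangle step rigorously: one must check that the local transformations at the reflected boundary can be performed independently and consistently, that each one preserves the random-cluster weights (which is precisely the content of $y^3+3y^2-q=0$), and that none of them touches an edge of $\g_1$ or $\g_2$ so that the connectivity event $\{\g_1\leftrightarrow\g_2\}$ is transported unchanged — all of which is exactly what Hypothesis $(\star\star)$ is tailored to ensure.
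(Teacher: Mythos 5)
Your proof follows the same three-step structure as the paper's: a self-duality identity from planar duality, conversion of the reflected dual measure $\sigma_d*\phi^\star_{\g_1,\g_2}$ to a random-cluster measure on $G(\g_1,\g_2)\subset\mathbb{T}$ at parameter $p_\mathbb{T}$ via star-triangle transformations (with Hypothesis $(\star\star)$ ensuring these can be performed), and the $q^2$ Radon-Nikodym bound coming from $\g_1$ and $\g_2$ being wired together rather than separately. One minor geometric misstatement worth flagging: the hexagonal stars to be collapsed are centered at the $\mathbb{T}'$-vertices throughout the interior of the domain, one per such interior vertex (equivalently, per triangle of $\mathbb{T}$ inside $G(\g_1,\g_2)$ containing a $\mathbb{T}'$-vertex), not ``one for each edge of $\g_1\cup\g_2$''; this does not affect the argument, and the logical structure and conclusion match the paper.
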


\begin{proof}
  As previously, if $\g_1$ and $\g_2$ are not connected, $\g_1^\star$ 
  and $\g_2^\star$ are connected in the dual model, where 
  $\g_1^\star,\g_2^\star\subset\mathbb{H}$ are the dual arcs bordering 
  $G(\g_1,\g_2)$ close to $\sigma_d(\g_1)$ and $\sigma_d(\g_2)$. Thanks 
  to Hypothesis $(\star\star)$ and the mixed boundary condition, this 
  event is equivalent to the event that $\sigma_d(\g_1)$ and 
  $\sigma_d(\g_2)$ are dual connected. Using Hypothesis $(\star)$ and 
  the symmetry, we deduce $$\phi_{\g_1,\g_2} \big(\g_1\leftrightarrow 
  \g_2\big) + \sigma_d*\phi_{\g_1,\g_2}^\star 
  \big(\g_1\leftrightarrow\g_2\big)=1,$$ where as before $\sigma_d * 
  \phi_{\g_1,\g_2}^\star$ denotes the push-forward under the symmetry 
  $\sigma_d$ of the dual measure of $\phi_{\g_1,\g_2}$ --- in 
  particular, it lies on $\sigma_d(\mathbb{H})$ and the edge-weight is 
  $p_\mathbb{T}^\star$.  This lattice contains the sites of $\mathbb{T}$ 
  and those of another copy of the triangular lattice which we will 
  denote by $\mathbb{T}'$.  Since $\g_1$ and $\g_2$ are two paths of 
  $\mathbb{T}$, one can use the star-triangle transformation for any 
  triangle of $\mathbb{T}$ included in $G(\g_1,\g_2)$ that contains a 
  vertex of $\mathbb{T}'$: one obtains that 
  $\sigma_d*\phi_{\g_1,\g_2}^\star\big(\g_1\leftrightarrow\g_2\big)$ is 
  equal to the probability of $\g_1$ and $\g_2$ being connected, in a 
  model on $\mathbb{T}$ with edge-weight $p_\mathbb{T}$. Here, we need 
  Hypothesis $(\star\star)$ again in order to ensure that all the 
  triangles containing a vertex of $\mathbb{T}'$ have no edges on the 
  boundary (which would have forbidden the use of the star-triangle 
  transformation). The same observation as in the case of the square 
  lattice shows that the boundary conditions are the same as for 
  $\phi_{\g_1,\g_2}$, except that arcs $\g_1$ and $\g_2$ are wired 
  together. The same reasoning as in Lemma~\ref{self-duality} implies 
  that $$\sigma_d*\phi_{\g_1,\g_2}^\star \big( \g_1\leftrightarrow\g_2 
  \big) \leq q^2 \phi_{\g_1,\g_2}\big(\g_1\leftrightarrow \g_2\big),$$ 
  and the claim follows readily.
\end{proof}

The existence of $c(1)$ is obtained in the same way as in the case of 
the square lattice, with only the obvious modifications needed; we leave 
the details as an ``exercise for the reader''.  
Theorem~\ref{RSW_torus_triangular} is derived exactly as in 
Section~\ref{sec:crossings}, as soon as an equivalent of 
Proposition~\ref{box_crossing} holds:
\begin{proposition}\label{box_crossing_triangular}
  There exists a constant $c(3/2)>0$ such that, for all $m>3n/2>0$, 
  $$\phi_{p_{\mathbb{T}},q,m}^{\rm p} \big( \C^v([0,3n/2)\times[0,n)) 
  \big) \geq c(3/2).$$
\end{proposition}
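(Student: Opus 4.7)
The plan is to mimic the proof of Proposition~\ref{box_crossing} almost word for word, with Lemma~\ref{self-duality_triangular} replacing Lemma~\ref{self-duality} at the self-dual step. I would consider $R=[0,3n/2)\times[0,n)$, split it into the two overlapping square-shaped lozenges $R_1=[0,n)\times[0,n)$ and $R_2=[n/2,3n/2)\times[0,n)$, and introduce the event $A$ that each of $R_1, R_2$ is crossed in its long ($e_1$) direction and that the segment $[n/2,n)\times\{0\}$ is connected inside $R_2$ to its top side. Combining the triangular-lattice analogue of Lemma~\ref{square_crossings} (which the authors leave as an exercise, and which rests on the self-duality of the balanced measure on $\mathbb T_m$ together with the symmetry of $\mathbb T$ between the $e_1$ and $e_2$ directions), the symmetry between left-most and right-most crossings, and the FKG inequality, one gets $\phi_{p_{\mathbb T},q,m}^{\rm p}(A)\geq c(1)^{3}/2$.

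Conditionally on $A$, I would define $\Ga_1$ as the top-most crossing of $R_1$ in the $e_1$ direction and $\Ga_2$ as the right-most $e_2$-direction crossing of $R_2$ emanating from $[n/2,n)\times\{0\}$. Both can be revealed by explorations on the medial lattice of $\mathbb T$ (the Kagomé lattice), leaving the region of $R_1$ below $\Ga_1$ and that of $R_2$ to the right of $\Ga_2$ untouched, exactly as in the square case. If $\Ga_1\cap\Ga_2\neq\emptyset$ then $\C^v(R)$ is already realised, so it suffices to handle the disjoint case.

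The remaining step is the reflection surgery. Pick a vertical line $d_n$ parallel to $\mathrm{i}\R$ positioned near the common $\lambda=n$ boundary of $R_1$ and $R_2$ and such that $\sigma_{d_n}$ maps $\mathbb T$ to a sub-lattice of a translate of $\mathbb H$; then perform the construction of Section~\ref{sec:crossings}, reflecting $\Ga_1$ across $d_n$, following it until its first intersection with $\Ga_2$, and extracting sub-arcs $\tilde\g_1\subset\Ga_1$ and $\tilde\g_2\subset\Ga_2$. By construction the pair $(\tilde\g_1,\tilde\g_2)$ satisfies Hypothesis~$(\star)$. The main new obstacle, compared with the square case, is Hypothesis~$(\star\star)$, which is needed in order to apply the star-triangle transformation inside the proof of Lemma~\ref{self-duality_triangular}. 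I expect to enforce it by a short backtracking rule at the endpoints of the surgery: one shortens each $\tilde\g_i$ by at most a bounded number of steps so that its endpoint lands on an edge of $\sigma_{d_n}(\mathbb T)$ whose paired primal vertex lies inside $G(\tilde\g_1,\tilde\g_2)$ and whose paired hexagonal vertex lies outside. This local adjustment depends only on the parity of the endpoint and costs only a bounded multiplicative factor, which is absorbed into the final constant.

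Once $(\star)$ and $(\star\star)$ are in force, $G_0(\tilde\g_1,\tilde\g_2)$ lies in the unexplored region, so conditionally on $(\Ga_1,\Ga_2)=(\g_1,\g_2)$ the random-cluster measure restricted to $G_0(\tilde\g_1,\tilde\g_2)$ stochastically dominates $\phi_{\tilde\g_1,\tilde\g_2}$ (the wiring inherited along the explorations only increases the boundary condition). Lemma~\ref{self-duality_triangular} then gives
\[
\phi_{p_{\mathbb T},q,m}^{\rm p}\bigl(\g_1\leftrightarrow\g_2\,\big|\,\Ga_1=\g_1,\Ga_2=\g_2\bigr)\geq\frac{1}{1+q^{2}}
\]
when the two candidate paths are disjoint (the conditional probability being trivially $1$ otherwise). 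Integrating over the distribution of $(\Ga_1,\Ga_2)$ on $A$ and using $A\cap\{\Ga_1\leftrightarrow\Ga_2\}\subset\C^v(R)$ concludes, with $c(3/2)=c(1)^{3}/[2(1+q^{2})]$ up to the bounded loss from the $(\star\star)$ correction.
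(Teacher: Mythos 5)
Your outline follows the paper's overall scheme (same rectangle decomposition, same crossing event $A$, exploration of the extremal paths, and invocation of Lemma~\ref{self-duality_triangular} in place of Lemma~\ref{self-duality}). But you have misdiagnosed what Hypothesis $(\star\star)$ demands, and this is precisely the new difficulty that the proof has to overcome. Hypothesis $(\star\star)$ is \emph{not} an endpoint condition: it requires that for \emph{every} edge of $\sigma_d(\g_1)$ and $\sigma_d(\g_2)$, the paired vertex of $\mathbb{T}$ lie inside $G(\g_1,\g_2)$ and the paired vertex of $\mathbb{H}$ lie outside. For a generic explored crossing $\Ga_1$, a positive fraction of its interior edges will violate this, so a ``short backtracking rule at the endpoints'' cannot repair it. No bounded-length surgery near the endpoints can turn an arbitrary $\tilde\g_1$ into a path satisfying $(\star\star)$.

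The paper's device is different. Given $\Ga_1$ (resp.\ $\Ga_2$), one constructs an auxiliary path $\Ga_1'$ (resp.\ $\Ga_2'$) by replacing every bad edge with the other two edges of the adjacent triangle, i.e.\ by detouring through the third vertex, thereby pushing the path to the correct side of the offending $\sigma_d(\mathbb{H})$-vertex. This replacement is global along the path, and it is well-defined precisely because $\Ga_1$ is the \emph{top-most} crossing (and $\Ga_2$ the right-most), which guarantees the absence of double edges and lets the detours be inserted consistently. The resulting $\Ga_1',\Ga_2'$ satisfy $(\star\star)$ but are \emph{not} open paths, so Lemma~\ref{self-duality_triangular} gives a lower bound of $1/(1+q^2)$ for $\Ga_1'\leftrightarrow\Ga_2'$ in $G(\Ga_1',\Ga_2')$; one then uses that $\Ga_1$ separates $\Ga_1'$ from $\Ga_2'$ and $\Ga_2$ separates $\Ga_2'$ from $\Ga_1$ inside that domain, together with the wired conditioning inherited from $\{\Ga_1=\g_1,\Ga_2=\g_2\}$, to conclude that $\Ga_1\leftrightarrow\Ga_2$ with conditional probability at least $1/(1+q^2)$. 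This intermediate step --- running the duality estimate on deterministic auxiliary paths and then transferring the connection back to the genuine open crossings --- is absent from your argument, and without it your claimed bound does not follow. (A minor further discrepancy: the paper's event $A$ also asks that $[n,3n/2)\times\{n\}$ be connected to the bottom inside $D_2$, yielding $c(1)^4/4$ rather than your $c(1)^3/2$, but this is a harmless difference of constants.)
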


\begin{figure}[ht!]
  \begin{center}
    \includegraphics[width=0.8\hsize]{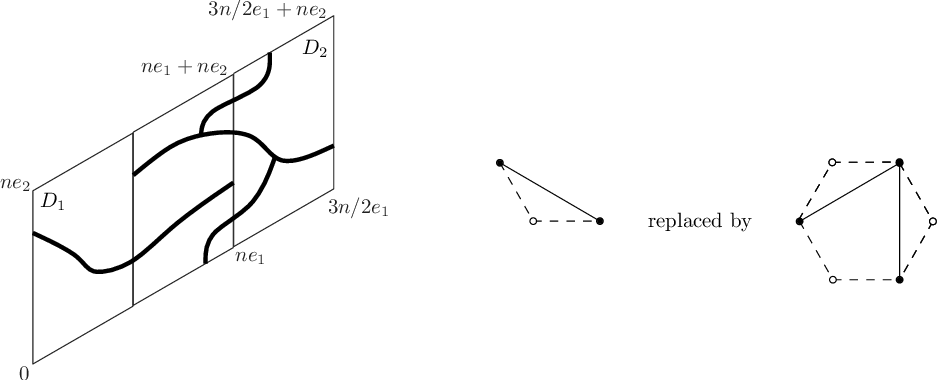}
  \end{center}
  \caption{\textbf{Left:} The set $[0,3n/2)\times[0,n)$ and the event 
  $A$. \textbf{Right:} One can obtain the path $\Ga_1'$ from $\Ga_1$ by 
  replacing any bad edge with two edges. Since $\Ga_1$ is the top-most 
  crossing, it contains no double edges and this construction can be 
  done.}
  \label{fig:event_A_triangular}
\end{figure}

\begin{proof}
  The general framework of the proof is the same as before, but some 
  technicalities occur because the underlying lattice is not self-dual.  
  Consider the rectangle $D=[0,3n/2)\times[0,n)$, which is the union of 
  rectangles $D_1=[0,n)\times[0,n)$ and $D_2=[n/2,3n/2)\times[0,n)$, see 
  Figure~\ref{fig:event_A_triangular}. Let $A$ be the event that:
  \begin{itemize}
    \item $D_1$ and $D_2$ are both crossed horizontally (each crossing 
      has probability at least $c(1)$ to occur);
    \item $[n/2,n)\times\{0\}$ (resp. $[n,3n/2)\times\{n\}$) is 
      connected inside $D_2$ to the top side (resp.\ to the bottom).  
      Using the FKG inequality and symmetries of the lattice, this event 
      occurs with probability larger than $c(1)^2/4$.
  \end{itemize}
  Therefore, $A$ has probability larger than $c(1)^4/4$. 

  When $A$ occurs, define $\Ga_1$ to be the top-most crossing of the 
  rectangle $D_1$, and $\Ga_2$ the right-most crossing in $D_2$ between 
  $[n/2,n)\times\{0\}$ and the top side of $D_2$. Note that $\Ga_2$ is 
  automatically connecting $[n/2,n)\times\{0\}$ to the right edge and to 
  $[n,3n/2)\times\{n\}$. In order to conclude, it is sufficient to prove 
  that $\Ga_1$ and $\Ga_2$ are connected with probability larger than 
  some positive constant.

  Consider the lowest path $\Ga'_1$ above $\Ga_1$ which satisfies the 
  following property: for any edge $e$ in $\Ga'_1$, the associated site 
  of $\sigma_d(\mathbb{H})$ (see the definition of Hypothesis 
  $(\star\star)$) is in the connected component of $D_1\setminus \Ga'_1$ 
  \emph{above} $\Ga_1'$. Such a path can be obtained from $\Ga_1$ by 
  replacing every `bad' edge with the other two edges of a triangle, as 
  shown in Figure~\ref{fig:event_A_triangular}. Since $\Ga_1$ is the 
  top-most crossing, it cannot have double edges and the path $\Ga'_1$ 
  can be constructed. In particular it ends at the same point as 
  $\Ga_1$, and it goes from left to right. Note that it is not 
  necessarily open. We define $\Ga_2'$ similarly in the obvious way (the 
  left-most path on the right of $\Ga_2$ such that for any edge of 
  $\Ga_2'$, the associate site of $\sigma_d(\mathbb{H})$ is on the right 
  of $\Ga'_2$).

  We now sketch the end of the proof. Apply a construction similar to 
  the proof of Proposition~\ref{box_crossing} in order to create a 
  domain $G(\Ga'_1,\Ga_2')$. With mixed boundary conditions, the 
  probability of connecting $\Ga'_1$ to $\Ga'_2$ in $G(\Ga'_1,\Ga'_2)$ 
  is larger than $1/(1+q^2)$ ($\Ga_1'$ and $\Ga_2'$ have been 
  constructed in such a way that Hypothesis $(\star\star)$ is
  fulfilled). But $\Ga_1$ disconnects $\Ga'_1$ from $\Ga'_2$, and 
  $\Ga_2$ disconnects $\Ga'_2$ from $\Ga_1$. Using boundary conditions 
  inherited from the fact that $\Ga_1$ and $\Ga_2$ are crossings, one 
  can prove that $\Ga_1$ is connected to $\Ga_2$ in $G(\Ga'_1,\Ga_2')$ 
  with probability larger than $1/(1+q^2)$. The end of the proof follows 
  exactly the same lines as in the case of the square lattice.
\end{proof}

\paragraph{Acknowledgments.}

The authors were supported by the ANR grant BLAN06-3-134462, the EU 
Marie-Curie RTN CODY, the ERC AG CONFRA, as well as by the Swiss {FNS}.  
The authors would like to thank Geoffrey Grimmett for many fruitful 
discussions and precious advice, and Andr\'as B\'alint for numerous 
comments on the manuscript. This work was mostly done during a stay of 
the authors at IMPA in Rio de Janeiro: they would like to thank Vladas 
Sidoravicius for his hospitality. The second author would like to thank 
Stanislav Smirnov for his constant support.

\begin{flushright}\footnotesize\obeylines
  \textsc{Unit\'e de Math\'ematiques Pures et Appliqu\'ees}
  \textsc{\'Ecole Normale Sup\'erieure de Lyon}
  \textsc{F-69364 Lyon CEDEX 7, France}
  \textsc{E-mail:} \texttt{Vincent.Beffara@ens-lyon.fr}
  \bigskip
  \textsc{Section de Math\'ematiques}
  \textsc{Universit\'e de Gen\`eve}
  \textsc{Gen\`eve, Switzerland}
  \textsc{E-mail:} \texttt{hugo.duminil@unige.ch}
\end{flushright}
\end{document}